\newcommand{\morph}[1]{\xrightarrow{#1}}
\newcommand{\pmorph}{\relbar\joinrel\mapstochar\joinrel\rightarrow}
\newcommand{\cat}[1]{\mathcal{#1}}
\newcommand{\opcat}[1]{#1^\textrm{op}}
\newcommand{\set}{\mathsf{Set}}
\newcommand{\cosmos}{\mathcal{V}}
\newcommand{\vsq}{\mathcal{V}^2}
\newcommand{\vsqCat}{\vsq\text{-}\Cat}
\newcommand{\vsqProf}{\vsq\text{-}\Prof}
\newcommand{\Cat}{\mathsf{Cat}}
\newcommand{\Prof}{\mathsf{Prof}}
\newcommand{\VCat}{\cosmos\text{-}\mathsf{Cat}}
\newcommand{\VProf}{\cosmos\text{-}\mathsf{Prof}}
\newcommand{\Tamb}{\mathsf{Tamb}}
\newcommand{\funny}{\mathrel{\square}}
\newcommand{\lan}{\textrm{Lan}}
\newcommand{\opt}{\mathsf{Optic}}
\newcommand{\kl}[1]{\mathsf{Kl}_{#1}}
\newcommand{\bl}{\mathord{=}}
\DeclareFontFamily{U}{min}{}
\DeclareFontShape{U}{min}{m}{n}{<-> udmj30}{}
\newcommand{\xRightarrow}[2][]{\ext@arrow 0359\Rightarrowfill@{#1}{#2}}
\theoremstyle{definition}
\newtheorem{defn}{Definition}
\theoremstyle{plain}
\newtheorem{prop}{Proposition}
\theoremstyle{plain}
\theoremstyle{plain}
\newtheorem{thm}{Theorem}
\theoremstyle{plain}
\theoremstyle{remark}
\newtheorem*{remark}{Remark}
\theoremstyle{definition}
\title{Optics for Premonoidal Categories}
\author{
James Hefford
\institute{University of Oxford, UK}
\email{james.hefford@cs.ox.ac.uk}
\and
Mario Román
\institute{Tallinn University of Technology, Estonia}
\email{mroman@ttu.ee}
}
\begin{document}

\maketitle

\begin{abstract}
  We further the theory of optics or ``circuits-with-holes'' to encompass premonoidal categories: monoidal categories without the interchange law.
  Every premonoidal category gives rise to an effectful category (i.e.\ a generalised Freyd-category) given by the embedding of the monoidal subcategory of central morphisms.
  We introduce ``pro-effectful'' categories and show that optics for premonoidal categories exhibit this structure.

  Pro-effectful categories are the non-representable versions of effectful categories, akin to the generalisation of monoidal to promonoidal categories.
  We extend a classical result of Day to this setting, showing an equivalence between pro-effectful structures on a category and effectful structures on its free tight cocompletion.
  We also demonstrate that pro-effectful categories are equivalent to prostrong promonads.
\end{abstract}

\section{Introduction}
Monoidal categories play a central role in many categorical models, from programming semantics \cite{asperti}, to quantum theory \cite{abramsky_coecke,coecke_kissinger_2017}, to electrical circuits \cite{bonchi_affine}, for they provide the necessary mathematical structure to describe the interaction of systems over time (by composition) and space (by tensor product).
Monoidal categories have a graphical calculus known as string diagrams \cite{joyal_street1} which provides a formalisation of circuit diagrams.
There has been much interest in studying the categorical structure of \textit{circuits-with-holes} \cite{chiribella_circuits,kissinger_caus,wilson_locality} over a given monoidal category $\cat{C}$; that is, incomplete diagrams in $\cat{C}$.

\begin{xlrbox}{optics_equiv}
  \begin{tikzpicture}[baseline={([yshift=-.8ex]current bounding box.center)}]
    \node[3leggedpants,wide] (pants) {};
    \node[tube,long,anchor=top] (tube) at (pants.leftleg) {};
    \node[tube,long,anchor=top] (tube2) at (pants.rightleg) {};
    \node[3leggedcopants,wide,anchor=leftleg] (copants) at (tube.bot) {};
    \node[label] (g) at ([yshift=0.2cm]pants.center) {$g$};
    \node[label] (f) at ([yshift=-0.2cm]copants.center) {$f$};
    \node[label] (u) at (copants.leftleg) {$u$};
    \node[label] (v) at (copants.rightleg) {$v$};
    \begin{pgfonlayer}{edgelayer}
      \draw ([xshift=-0.075cm,yshift=0.15cm]f.center) to [out=90,in=-90] (u) to (pants.leftleg) to [out=90,in=-90] ([xshift=-0.075cm,yshift=-0.15cm]g.center);
      \draw ([xshift=0.075cm,yshift=0.15cm]f.center) to [out=90,in=-90] (v) to (pants.rightleg) to [out=90,in=-90] ([xshift=0.075cm,yshift=-0.15cm]g.center);
      \draw (copants.midleg) to (f) to (copants.belt);
      \draw (pants.midleg) to (g) to (pants.belt);
    \end{pgfonlayer}
    \node[seam] at (tube.center) {};
    \node[seam] at (tube2.center) {};
    \node[system] at ([yshift=-0.15cm]copants.belt) {$a$};
    \node[system] at ([yshift=0.15cm]copants.midleg) {$b$};
    \node[system] at ([yshift=-0.15cm]pants.midleg) {$b'$};
    \node[system] at ([yshift=0.2cm]pants.belt) {$a'$};
  \end{tikzpicture}
\end{xlrbox}

\begin{xlrbox}{optics_equiv2}
  \begin{tikzpicture}[baseline={([yshift=-.8ex]current bounding box.center)}]
    \node[3leggedpants,wide] (pants) {};
    \node[tube,long,anchor=top] (tube) at (pants.leftleg) {};
    \node[tube,long,anchor=top] (tube2) at (pants.rightleg) {};
    \node[3leggedcopants,wide,anchor=leftleg] (copants) at (tube.bot) {};
    \node[label] (g) at ([yshift=0.2cm]pants.center) {$g$};
    \node[label] (f) at ([yshift=-0.2cm]copants.center) {$f$};
    \node[label] (u) at (pants.leftleg) {$u$};
    \node[label] (v) at (pants.rightleg) {$v$};
    \begin{pgfonlayer}{edgelayer}
      \draw ([xshift=-0.075cm,yshift=0.15cm]f.center) to [out=90,in=-90] (copants.leftleg) to (u) to [out=90,in=-90] ([xshift=-0.075cm,yshift=-0.15cm]g.center);
      \draw ([xshift=0.075cm,yshift=0.15cm]f.center) to [out=90,in=-90] (copants.rightleg) to (v) to [out=90,in=-90] ([xshift=0.075cm,yshift=-0.15cm]g.center);
      \draw (copants.midleg) to (f) to (copants.belt);
      \draw (pants.midleg) to (g) to (pants.belt);
    \end{pgfonlayer}
    \node[seam] at (tube.center) {};
    \node[seam] at (tube2.center) {};
    \node[system] at ([yshift=-0.15cm]copants.belt) {$a$};
    \node[system] at ([yshift=0.15cm]copants.midleg) {$b$};
    \node[system] at ([yshift=-0.15cm]pants.midleg) {$b'$};
    \node[system] at ([yshift=0.2cm]pants.belt) {$a'$};
  \end{tikzpicture}
\end{xlrbox}

\begin{wrapfigure}[11]{r}{0.35\textwidth}
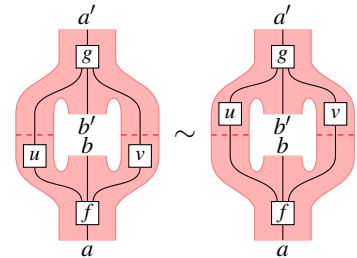

  \centering
  \xusebox{optics_equiv} $\ \sim \ $ \xusebox{optics_equiv2}
  \caption{Optics equivalence relation.}
  \label{fig:optics}
\end{wrapfigure}

The categorical methods required to describe these circuits-with-holes have their roots in the study of strong profunctors, or \textit{Tambara modules} \cite{tambara,boisseau_optics}.
These modules are the algebras for a certain promonad \cite{pastro_street} with the resulting category of free algebras known as the category of \textit{optics} by the functional programming community, where it is used to model various bidirectional data accessors \cite{clarke_profunctor,riley_optics,roman_optics}.
The category $\opt(\cat{C})$ has objects given by pairs $(a,a')$ of objects of $\cat{C}$ and homs $\opt(\cat{C})((a,a'),(b,b'))$ given by the quotiented sets of the form in Figure \ref{fig:optics} \cite{pastro_street,riley_optics,clarke_profunctor}.
The idea is to produce a category of holes in circuits from $\cat{C}$, where two circuits are equivalent if they can be rewritten into each other by sliding boxes.
This equivalence relation is handled by the coend $\int^{xy} \cat{C}(a,x\otimes b\otimes y) \times \cat{C}(x\otimes b'\otimes y,a')$.
Outside of functional programming, this category has been suggested as a way to model holes in general monoidal categories \cite{roman_coend}.
In particular, incomplete diagrams have applications in quantum theory where they are known as \textit{combs} \cite{chiribella_circuits} and capture a certain subset of the more general quantum supermaps \cite{chiribella_supermaps}: optics have been suggested to formalise these structures \cite{hefford_coend}.

\begin{xlrbox}{vertical_tensor}
  \begin{tikzpicture}[baseline={([yshift=-.8ex]current bounding box.center)}]
    \node[3leggedpants,wide] (pants) {};
    \node[tube,long,anchor=top] (tube) at (pants.leftleg) {};
    \node[tube,long,anchor=top] (tube2) at (pants.rightleg) {};
    \node[3leggedcopants,shortcrotch,wide,anchor=leftleg] (copants) at (tube.bot) {};
    \node[3leggedpants,shortcrotch,wide,anchor=belt] (pants2) at (copants.belt){};
    \node[tube,long,anchor=top] (tube3) at (pants2.leftleg) {};
    \node[tube,long,anchor=top] (tube4) at (pants2.rightleg) {};
    \node[3leggedcopants,wide,anchor=leftleg] (copants2) at (tube3.bot) {};
    \node[label] (h) at ([yshift=0.2cm]pants.center) {$h$};
    \node[label] (g) at (copants.belt) {$g$};
    \node[label] (f) at ([yshift=-0.2cm]copants2.center) {$f$};
    \begin{pgfonlayer}{edgelayer}
      \draw ([xshift=-0.075cm,yshift=0.15cm]g.center) to [out=90,in=-90] (copants.leftleg) to (pants.leftleg) to [out=90,in=-90] ([xshift=-0.075cm,yshift=-0.15cm]h.center);
      \draw ([xshift=0.075cm,yshift=0.15cm]g.center) to [out=90,in=-90] (copants.rightleg) to (pants.rightleg) to [out=90,in=-90] ([xshift=0.075cm,yshift=-0.15cm]h.center);
      \draw (copants.midleg) to (g) to (pants2.midleg);
      \draw (pants.midleg) to (h) to (pants.belt);
      \draw ([xshift=-0.075cm,yshift=0.15cm]f.center) to [out=90,in=-90] (copants2.leftleg) to (pants2.leftleg) to [out=90,in=-90] ([xshift=-0.075cm,yshift=-0.15cm]g.center);
      \draw ([xshift=0.075cm,yshift=0.15cm]f.center) to [out=90,in=-90] (copants2.rightleg) to (pants2.rightleg) to [out=90,in=-90] ([xshift=0.075cm,yshift=-0.15cm]g.center);
      \draw (copants2.midleg) to (f) to (copants2.belt);
    \end{pgfonlayer}
    \node[system] at ([yshift=-0.15cm]pants2.midleg) {$a'$};
    \node[system] at ([yshift=0.15cm]copants2.midleg) {$a$};
    \node[system] at ([yshift=0.15cm]copants.midleg) {$b$};
    \node[system] at ([yshift=-0.15cm]pants.midleg) {$b'$};
    \node[system] at ([yshift=0.2cm]pants.belt) {$c'$};
    \node[system] at ([yshift=-0.2cm]copants2.belt) {$c$};
  \end{tikzpicture}
\end{xlrbox}

\begin{xlrbox}{horizontal_tensor}
  \begin{tikzpicture}[baseline={([yshift=-.8ex]current bounding box.center)}]
    \node[5leggedpants,wider] (pants) {};
    \node[tube,long,anchor=top] (tube) at (pants.leftleftleg) {};
    \node[tube,long,anchor=top] at (pants.midleg) {};
    \node[tube,long,anchor=top] at (pants.rightrightleg) {};
    \node[5leggedcopants,wider,anchor=leftleftleg] (copants) at (tube.bot) {};
    \node[label] (f) at ([yshift=-0.3cm]copants.center) {$f$};
    \node[label] (g) at ([yshift=0.3cm]pants.center) {$g$};
    \begin{pgfonlayer}{edgelayer}
      \draw (pants.belt) to (copants.belt);
      \draw ([xshift=-0.09cm]f.center) to [out=90,in=-90] (copants.leftleftleg) to (pants.leftleftleg) to [out=90,in=-90] ([xshift=-0.09cm]g.center);
      \draw ([xshift=0.09cm]f.center) to [out=90,in=-90] (copants.rightrightleg) to (pants.rightrightleg) to [out=90,in=-90] ([xshift=0.09cm]g.center);\
      \draw ([xshift=-0.05cm,yshift=0.1cm]f.center) to [out=90,in=-90] (copants.leftleg);
      \draw ([xshift=0.05cm,yshift=0.1cm]f.center) to [out=90,in=-90] (copants.rightleg);
      \draw ([xshift=-0.05cm,yshift=-0.1cm]g.center) to [out=-90,in=90] (pants.leftleg);
      \draw ([xshift=0.05cm,yshift=-0.1cm]g.center) to [out=-90,in=90] (pants.rightleg);
    \end{pgfonlayer}
    \node[system] at ([yshift=0.15cm]copants.leftleg) {$a$};
    \node[system] at ([yshift=-0.15cm]pants.leftleg) {$a'$};
    \node[system] at ([yshift=0.15cm]copants.rightleg) {$b$};
    \node[system] at ([yshift=-0.15cm]pants.rightleg) {$b'$};
    \node[system] at ([yshift=-0.15cm]copants.belt) {$c$};
    \node[system] at ([yshift=0.15cm]pants.belt) {$c'$};
  \end{tikzpicture}
\end{xlrbox}

What is still missing is a full description of optics for \textit{pre}monoidal categories.
Informally, premonoidal categories are like monoidal categories but dropping the interchange law so that in general $(1\otimes f)(g\otimes 1)\neq (g\otimes 1)(1\otimes f)$.
Such categories are useful in the modelling of computational side-effects and it was with precisely this motivation that Power and Robinson introduced them \cite{power_premonoidal}. This manuscript is indebted to the research into premonoidal categories \cite{jeffrey97,levy_push,power_structure,power02closed,staton_premulticategories}.

Since not all morphisms in a premonoidal category interchange, there is now an additional subtlety to formalising optics.
The coends that are usually used to quotient to allow for the sliding of morphisms can now only be taken over the \textit{centre} $Z\cat{C}$ of the premonoidal category $\cat{C}$ - the wide monoidal subcategory comprised of the morphisms that interchange with all others.

\begin{xlrbox}{premon_optics_equiv}
  \begin{tikzpicture}[baseline={([yshift=-.8ex]current bounding box.center)}]
    \node[3leggedpants,wide] (pants) {};
    \node[tube,long,bluetube,anchor=top] (tube) at (pants.leftleg) {};
    \node[tube,long,bluetube,anchor=top] (tube2) at (pants.rightleg) {};
    \node[3leggedcopants,wide,anchor=leftleg] (copants) at (tube.bot) {};
    \node[label] (g) at ([yshift=0.2cm]pants.center) {$g$};
    \node[label] (f) at ([yshift=-0.2cm]copants.center) {$f$};
    \node[label] (u) at (copants.leftleg) {$u$};
    \node[label] (v) at (copants.rightleg) {$v$};
    \begin{pgfonlayer}{edgelayer}
      \draw ([xshift=-0.075cm,yshift=0.15cm]f.center) to [out=90,in=-90] (u) to (pants.leftleg) to [out=90,in=-90] ([xshift=-0.075cm,yshift=-0.15cm]g.center);
      \draw ([xshift=0.075cm,yshift=0.15cm]f.center) to [out=90,in=-90] (v) to (pants.rightleg) to [out=90,in=-90] ([xshift=0.075cm,yshift=-0.15cm]g.center);
      \draw (copants.midleg) to (f) to (copants.belt);
      \draw (pants.midleg) to (g) to (pants.belt);
    \end{pgfonlayer}
    \node[seam] at (tube.center) {};
    \node[seam] at (tube2.center) {};
    \node[system] at ([yshift=-0.15cm]copants.belt) {$a$};
    \node[system] at ([yshift=0.15cm]copants.midleg) {$b$};
    \node[system] at ([yshift=-0.15cm]pants.midleg) {$b'$};
    \node[system] at ([yshift=0.2cm]pants.belt) {$a'$};
  \end{tikzpicture}
\end{xlrbox}

\begin{xlrbox}{premon_optics_equiv2}
  \begin{tikzpicture}[baseline={([yshift=-.8ex]current bounding box.center)}]
    \node[3leggedpants,wide] (pants) {};
    \node[tube,long,bluetube,anchor=top] (tube) at (pants.leftleg) {};
    \node[tube,long,bluetube,anchor=top] (tube2) at (pants.rightleg) {};
    \node[3leggedcopants,wide,anchor=leftleg] (copants) at (tube.bot) {};
    \node[label] (g) at ([yshift=0.2cm]pants.center) {$g$};
    \node[label] (f) at ([yshift=-0.2cm]copants.center) {$f$};
    \node[label] (u) at (pants.leftleg) {$u$};
    \node[label] (v) at (pants.rightleg) {$v$};
    \begin{pgfonlayer}{edgelayer}
      \draw ([xshift=-0.075cm,yshift=0.15cm]f.center) to [out=90,in=-90] (copants.leftleg) to (u) to [out=90,in=-90] ([xshift=-0.075cm,yshift=-0.15cm]g.center);
      \draw ([xshift=0.075cm,yshift=0.15cm]f.center) to [out=90,in=-90] (copants.rightleg) to (v) to [out=90,in=-90] ([xshift=0.075cm,yshift=-0.15cm]g.center);
      \draw (copants.midleg) to (f) to (copants.belt);
      \draw (pants.midleg) to (g) to (pants.belt);
    \end{pgfonlayer}
    \node[seam] at (tube.center) {};
    \node[seam] at (tube2.center) {};
    \node[system] at ([yshift=-0.15cm]copants.belt) {$a$};
    \node[system] at ([yshift=0.15cm]copants.midleg) {$b$};
    \node[system] at ([yshift=-0.15cm]pants.midleg) {$b'$};
    \node[system] at ([yshift=0.2cm]pants.belt) {$a'$};
  \end{tikzpicture}
\end{xlrbox}

\noindent
\begin{minipage}{0.5\textwidth}\centering
  \scalebox{0.9}{\xusebox{horizontal_tensor}} \ ,\hspace{0.5cm} \scalebox{0.9}{\xusebox{vertical_tensor}}
  \captionof{figure}{\textit{Left}: $\otimes_H$, \textit{Right}: $\otimes_V$.}
  \label{fig:optics_tensors}
\end{minipage}%
\begin{minipage}{0.5\textwidth}\centering
  \scalebox{0.9}{\xusebox{premon_optics_equiv}} $\ \sim \ $ \scalebox{0.9}{\xusebox{premon_optics_equiv2}}
  
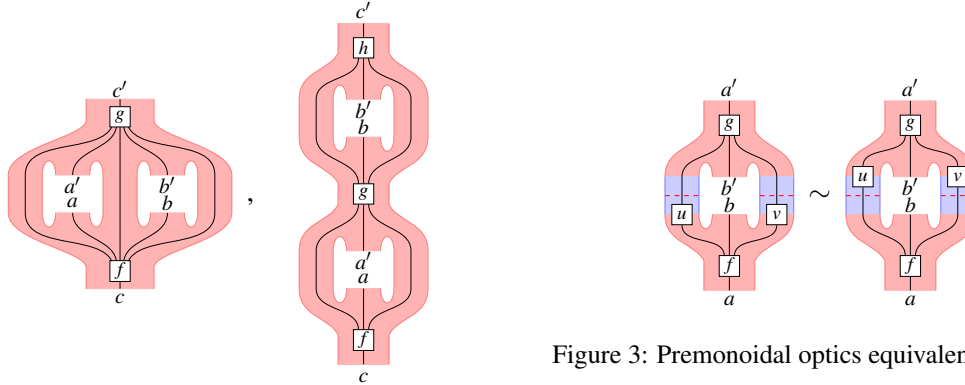
\captionof{figure}{Premonoidal optics equivalence relation.}
  \label{fig:premon_optics}
\end{minipage}

In this article, we will develop the machinery required to deal with optics for premonoidal categories.
There is a category $\opt_{Z\cat{C}}(\cat{C})$ with objects given by pairs $(a,a')$ of those of $\cat{C}$ and homs given by the sets of the form in Figure \ref{fig:premon_optics}, handled by the coend $\int^{xy\in Z\cat{C}} \cat{C}(a,x\otimes b\otimes y) \times \cat{C}(x\otimes b'\otimes y,a')$.
While premonoidal optics can be seen to be a special case of generalised optics for an actegory \cite{clarke_profunctor,riley_optics}, the full monoidal-like structure of the category has not been discussed before.
Optics over a monoidal category $\cat{C}$ are equipped with two promonoidal tensor products, $\otimes_H,\otimes_V:\opt(\cat{C})\times\opt(\cat{C})\pmorph\opt(\cat{C})$, which capture the horizontal and vertical composition of holes (see Figure \ref{fig:optics_tensors}).
These two tensors interact to make $\opt(\cat{C})$ into a produoidal category \cite{earnshaw}.

Over a premonoidal category $\cat{C}$ we might hope to equip $\opt_{Z\cat{C}}(\cat{C})$ with two tensors analogous to those in Figure \ref{fig:optics_tensors}.
While the vertical tensor $\otimes_V$ poses no immediate difficulties, the horizontal tensor $\otimes_H$ does: we cannot expect this to be promonoidal because $\cat{C}$ does not satisfy interchange.
This requires us to introduce the notion of a \textit{pro-effectful} category which combines the structures of premonoidal and promonoidal categories together -- this is our main technical contribution.

We prove that pro-effectful categories are equivalently: \emph{(i)} prostrong promonads; \emph{(ii)} biproactegories (two-sided actions in the category of profunctors) which suitably extend a canonical action on the centre of the category; and \emph{(iii)} pseudomonoids in the bicategory of tight $\vsq$-profunctors.

Each of these gives a different perspective on pro-effectful categories, connecting them, respectively, with monads; the action definition of Freyd-categories given by Levy \cite{levy_push}; the pseudomonoid definition of effectful categories given by Román \cite{roman_promonads} and the work on closed effectful categories due to Power \cite{power_structure,power_generic}.
In particular, this final perspective demonstrates that pro-effectful categories are equivalent to closed effectful categories on the free tight cocompletion, where the effectful structure is given by a version of Day convolution.

Finally, there is an additional challenge with premonoidal optics: there is the category $\opt(Z\cat{C})$ of optics over the monoidal centre and an embedding, $\opt(Z\cat{C})\morph{}\opt_{Z\cat{C}}(\cat{C})$, of these central optics into the optics over the entire premonoidal category.
$\opt(Z\cat{C})$ is equipped with the two promonoidal structures, $\otimes_H$ and $\otimes_V$, and we would like to understand how these behave in relation to any tensors we can define on $\opt_{Z\cat{C}}(\cat{C})$.
This requires us to keep track of the centre and understand fully how it behaves in relation to the rest of the premonoidal category.

\section{Premonoidal and Effectful Categories}\label{sec:premon_effectful}

Let us start by formally defining premonoidal categories enriched over a fixed cosmos $\mathcal{V}$, taken to be bicomplete and closed symmetric monoidal.
We take some space to spell this out as there are some technicalities involved which do not appear to have been explicitly discussed elsewhere.
Unless otherwise indicated ``category,'' ``functor,'' ``natural transformation'' etc.\ should be taken to mean $\cosmos$-category etc.
We write $\otimes$ for the tensor of $\cosmos$ and for the enriched tensor of categories \cite{kelly}.

\begin{defn}[Binoidal Category]\label{defn:binoidal}
  A category $\cat{C}$ is binoidal when, for each object $a$, it is equipped with a pair of functors $a\ltimes -:\cat{C}\morph{}\cat{C}$ and $-\rtimes a:\cat{C}\morph{}\cat{C}$ such that for all $a$ and $b$, $a\ltimes b = a\rtimes b$.
\end{defn}

\begin{remark}
  In the case of $\cosmos=\set$, the previous definition is equivalent to the one in terms of the funny tensor product \cite{foltz,weber}, though this must be avoided in the enriched case because it relies on the discrete category $\cat{C}_0$ of objects of a category $\cat{C}$ which is ill-defined over arbitrary $\cosmos$.
\end{remark}
We now generalize the notion of \emph{central} natural transformation to the enriched case.

\begin{defn}[Central Natural Transformation]
  Let $\cat{C}$ be a binoidal category and $F,G:\cat{D}\morph{}\cat{C}$ be two functors.
  Let $\eta:F\morph{}G$ be a natural transformation, so that we have a family of morphisms $\eta_a:I_\cosmos\morph{}\cat{C}(Fa,Ga)$ of $\cosmos$ satisfying the usual naturality diagrams.
  The component $\eta_a$ is central when the following diagram commutes for all objects $c$ and $d$, and $\eta$ is central when all components are central.
  \begin{equation*}
    \begin{tikzcd}[column sep=2.8em,cramped]
      \cat{C}(c,d)\otimes I_\cosmos \ar[r,"1\otimes \eta_a"] & \cat{C}(c,d)\otimes\cat{C}(Fa,Ga) \ar[rr,"(Ga\ltimes-)\otimes(-\rtimes c)"] & & \cat{C}(Ga\ltimes c,Ga\ltimes d)\otimes\cat{C}(Fa\rtimes c,Ga\rtimes c) \ar[d,"\circ"] \\
      \cat{C}(c,d) \ar[u,"\rho"] \ar[d,"\lambda"'] & & & \cat{C}(Fa\rtimes c,Ga\rtimes d) \\
      I_\cosmos \otimes \cat{C}(c,d) \ar[r,"\eta_a\otimes 1"'] & \cat{C}(Fa,Ga)\otimes\cat{C}(c,d) \ar[rr,"(-\rtimes d)\otimes(Fa\ltimes-)"'] & & \cat{C}(Fa\rtimes d,Ga\rtimes d)\otimes\cat{C}(Fa\ltimes c,Fa\ltimes d) \ar[u,"\circ"' {yshift=-0.5mm}] 
    \end{tikzcd}
  \end{equation*}
\end{defn}

Binoidal categories give us the necessary machinery to define premonoidal categories.
\begin{defn}[Premonoidal Category]
  A premonoidal category $\cat{C}$ is a binoidal category endowed with an object $i$ and central natural isomorphisms,
  $(a\otimes b)\otimes c \cong a\otimes(b\otimes c)$ and $a\otimes i\cong a\cong i\otimes a$,
  such that the triangle and pentagon equations hold.
\end{defn}

As we have just seen, enriched premonoidal categories are require some care to define formally - since the coherence isomorphisms need to be central we are required to define binoidal categories first so that we can make sense of this centrality.
Even in the case $\cosmos=\set$, the 2-category $\Cat$ fails to be monoidal under the funny tensor product because the funny tensor of natural transformations is not well-defined unless the components are all central \cite{power_premonoidal}.
This prevents the swift and elegant definition ``\textit{a premonoidal category is a pseudomonoid in} $\Cat_{\funny}$.''

Power realised that premonoidal categories are more algebraically well-defined when one shifts to working with premonoidal categories with a specified subcategory of central morphisms \cite{power_structure}.
We call these \emph{effectful categories} \cite{roman_promonads}.
In order to define them we first need to give the definition of a functor between premonoidal categories.

\begin{defn}[Centre Piece]
  Let $\cat{C}$ be a binoidal category.
  A centre piece at objects $(a,b)$ in $\cat{C}$ is an object $U(a,b)$ in $\cosmos$, endowed with an arrow $\iota:U(a,b)\morph{}\cat{C}(a,b)$, such that for any objects $(c,d)$ the following diagrams commute.
  \begin{equation*}
    \begin{tikzcd}[cramped]
      U(a,b)\otimes \cat{C}(c,d) \ar[r,"\iota\otimes 1"] \ar[d,"\iota\otimes 1"'] & \cat{C}(a,b)\otimes \cat{C}(c,d) \ar[r,"(-\rtimes c)\otimes(b\ltimes -)"]& \cat{C}(a\rtimes c,b\rtimes c)\otimes \cat{C}(b\ltimes c,b\ltimes d) \ar[d,"\circ_\sigma"]\\
      \cat{C}(a,b)\otimes \cat{C}(c,d) \ar[r,"(-\rtimes d)\otimes(a\ltimes -)"'] & \cat{C}(a\rtimes d,b\rtimes d)\otimes \cat{C}(a\ltimes c,a\ltimes d) \ar[r,"\circ"'] & \cat{C}(a\rtimes c,b\rtimes d)
    \end{tikzcd}
  \end{equation*}
  \begin{equation*}
    \begin{tikzcd}[cramped]
      \cat{C}(c,d) \otimes U(a,b) \ar[r,"1\otimes\iota"] \ar[d,"1\otimes \iota"'] & \cat{C}(c,d)\otimes \cat{C}(a,b) \ar[r,"(-\rtimes a)\otimes(d\ltimes -)"]& \cat{C}(c\rtimes a,d\rtimes a)\otimes \cat{C}(d\ltimes a,d\ltimes b) \ar[d,"\circ_\sigma"]\\
      \cat{C}(c,d)\otimes \cat{C}(a,b) \ar[r,"(-\rtimes b)\otimes(c\ltimes -)"'] & \cat{C}(c\rtimes b,d\rtimes b)\otimes \cat{C}(c\ltimes a,c\ltimes b) \ar[r,"\circ"'] & \cat{C}(c\rtimes a,d\rtimes b)
    \end{tikzcd}
  \end{equation*}
\end{defn}



\begin{defn}[Premonoidal Functor]
  A premonoidal functor $F:\cat{C}\morph{}\cat{D}$ between premonoidal categories is a functor which preserves the centre pieces of $\cat{C}$ such that for all centre pieces $\iota:U(a,b)\morph{}\cat{C}(a,b)$, composition with $F$ gives a centre piece $U(a,b) \morph{\iota} \cat{C}(a,b) \morph{F_{a,b}} \cat{D}(Fa,Fb)$ at $(Fa,Fb)$ in $\cat{D}$.
  Furthermore $F$ must preserve the premonoidal structure up to natural transformations $Fa\otimes Fb \morph{}F(a\otimes b)$ and $i\morph{}Fi$ subject to coherence conditions like those for a monoidal functor.
  A premonoidal functor is strict when these transformations are identities.
\end{defn}

\begin{defn}[Effectful Category]
  An effectful category consists of a monoidal category $\cat{C}_0$, a premonoidal category $\cat{C}_1$ with the same objects as $\cat{C}_0$ and a strict, identity on objects, premonoidal functor $J:\cat{C}_0\morph{}\cat{C}_1$.
\end{defn}
\begin{remark}
  If $\cat{C}_0$ is cartesian, then an effectful category is known as a \emph{Freyd category} \cite{staton_premulticategories,levy_push}.
  At least one good reason to relax from requiring cartesian structure to arbitrary monoidal structure concerns applications of effectful categories outside of computer science.
  For instance, there has been interest in effectful structure for models of spacetime and quantum theory where products are not the canonical way of taking joint systems \cite{comeau,hefford_spacetime}, and in the study of Petri nets \cite{baez_petri}.
\end{remark}

Effectful categories can be seen as particular instances of actegories \cite{levy_push} - that is, a category with an action by a monoidal category \cite{benabou_bicategories,capucci_actegories}.
An effectful category $J:\cat{C}_0\morph{}\cat{C}_1$ specifies a left and right $\cat{C}_0$-action on $\cat{C}_1$, making $\cat{C}_1$ into a $\cat{C}_0$-$\cat{C}_0$-biactegory.
These actions $\ltimes:\cat{C}_0\otimes\cat{C}_1\morph{}\cat{C}_1$ and $\rtimes:\cat{C}_1\otimes\cat{C}_0\morph{}\cat{C}_1$, are special because they preserve the canonical actions of $\cat{C}_0$ on itself, i.e.\ $J$ extends the action $J\boxtimes = \ltimes(1\otimes J)$ and $J\boxtimes = \rtimes(J\otimes 1)$, and preserves the coherence isomorphisms.

Effectful categories are also precisely the same thing as strong promonads \cite{jacobs_arrows,garner,roman_promonads}.
Given a strong promonad $T:\cat{C}\pmorph\cat{C}$ there is a canonical premonoidal structure on the Kleisli category $\kl{T}$ which on objects acts like the tensor of $\cat{C}$.
The free functor $F:\cat{C}\morph{}\kl{T}$ then constitutes an effectful category.
Conversely, given an effectful category $J:\cat{C}_0\morph{}\cat{C}_1$, there is a promonad $\cat{C}_1(J-,J-):\cat{C}_0\pmorph\cat{C}_0$ which can be shown to be strong with strengths induced by the action of $\cat{C}_0$ on $\cat{C}_1$.

\subsubsection*{Effectful Categories as Pseudomonoids}
In this section, we show that effectful categories are pseudomonoids in the category of $\vsq$-enriched categories equipped with a modified version of the funny tensor product, where $\vsq = [\morph{},\cosmos]$ is the category of arrows and commutative squares in $\cosmos$.
In doing so we place effectful categories on the same footing as monoidal and promonoidal categories, showing that they are representations of the same underlying algebraic data.
This builds upon the work of Power who first studied the algebraicity of effectful categories in $\vsqCat$ \cite{power_structure,power_generic}.

\begin{prop}\label{prop:v2cosmos}
  Let $\cosmos$ be a complete, cocomplete, closed symmetric monoidal category.
  Then $\vsq$ is also a complete, cocomplete, closed symmetric monoidal category and therefore constitutes a cosmos.
\end{prop}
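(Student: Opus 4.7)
The plan is to identify $\vsq$ as the functor category $[\mathbf{2}, \cosmos]$, where $\mathbf{2}$ is the walking arrow, and to transport the required structure from $\cosmos$ pointwise. Completeness and cocompleteness are immediate, since (co)limits in a functor category with (co)complete codomain are computed componentwise from $\cosmos$.

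For the symmetric monoidal structure, I would equip $\vsq$ with the pointwise tensor: the tensor of $f \colon A_0 \morph{} A_1$ and $g \colon B_0 \morph{} B_1$ is the arrow $f \otimes g \colon A_0 \otimes B_0 \morph{} A_1 \otimes B_1$ (a commutative square by functoriality of $\otimes$ in $\cosmos$), with monoidal unit $\id{I_\cosmos}$. The associator, unitors, and braiding are induced componentwise from those in $\cosmos$, and the pentagon, triangle, and hexagon axioms follow from their counterparts in $\cosmos$ applied at each component.

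The closed structure is the principal content. Given arrows $f$ and $g$ as above, define the internal hom $[f, g]$ to be the canonical morphism
\[
  [A_0, B_0] \times_{[A_0, B_1]} [A_1, B_1] \longrightarrow [A_1, B_1]
\]
where the pullback in $\cosmos$ is taken over the cospan whose legs are postcomposition by $g$ and precomposition by $f$, and the arrow is the second projection. The pullback exists by completeness of $\cosmos$. To verify $\vsq(h \otimes f, g) \cong \vsq(h, [f, g])$ for $h \colon C_0 \morph{} C_1$, unpack a morphism on the left as a commutative square with components $\alpha \colon C_0 \otimes A_0 \morph{} B_0$ and $\beta \colon C_1 \otimes A_1 \morph{} B_1$; transpose each across the tensor--hom adjunction in $\cosmos$ to obtain $\tilde\alpha \colon C_0 \morph{} [A_0, B_0]$ and $\tilde\beta \colon C_1 \morph{} [A_1, B_1]$; and observe that commutativity of the original square is equivalent to the compatibility of $\tilde\alpha$ and $\tilde\beta \circ h$ over $[A_0, B_1]$. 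This induces a unique map $C_0 \morph{} [A_0, B_0] \times_{[A_0, B_1]} [A_1, B_1]$, which together with $\tilde\beta$ assembles into a morphism $h \morph{} [f, g]$ in $\vsq$; the construction is manifestly invertible.

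The main obstacle is really just this last diagram chase relating the commutative-square condition on one side of the adjunction to the pullback-cone condition on the other. It reduces to repeated applications of the tensor--hom adjunction in $\cosmos$ together with the universal property of the pullback. Naturality in $h$, $f$, and $g$ then follows automatically from naturality of the $\cosmos$-adjunction and functoriality of the pullback construction, establishing that $\vsq$ is a cosmos.
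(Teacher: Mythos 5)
Your proposal is correct, and in fact the paper offers no proof of this proposition at all --- it is stated as a known fact (it goes back to Power's work on enrichment in the arrow category), so there is nothing to compare against line by line. Your construction is the standard one and, importantly, it is the \emph{right} monoidal structure for the paper's purposes: the pointwise tensor on $[\mathbf{2},\cosmos]$ is exactly what makes a $\vsq$-category unpack into two $\cosmos$-categories with the same objects and an identity-on-objects functor, as the paper assumes in the surrounding discussion. The internal hom as the projection out of the pullback $[A_0,B_0]\times_{[A_0,B_1]}[A_1,B_1]\morph{}[A_1,B_1]$, with the adjunction verified by transposing both components and matching the commutative-square condition against the pullback-cone condition, is precisely the expected argument; the only ingredient worth flagging explicitly (which you do) is that completeness of $\cosmos$ is what guarantees the pullback exists.
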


Since $\vsq$ is a cosmos, we can consider categories enriched in $\vsq$ \cite{power_generic}.
A $\vsq$-category consists of a pair of categories $\cat{C}_0$ and $\cat{C}_1$ with the same objects, and an identity on objects functor $J:\cat{C}_0\morph{}\cat{C}_1$.
A $\vsq$-functor $F:J_\cat{C}\morph{}J_\cat{D}$ consists of a pair of functors $F_0:\cat{C}_0\morph{}\cat{D}_0$ and $F_1:\cat{C}_1\morph{}\cat{D}_1$ such that $F_1 J_{\cat{C}} = J_{\cat{D}} F_0$.
A $\vsq$-natural transformation $\eta:F\Rightarrow G$ between $\vsq$-functors $F,G:J_\cat{C}\morph{}J_\cat{D}$ consists of natural transformations $\eta^0:F_0\Rightarrow G_0$ and $\eta^1:F_1\Rightarrow G_1$ with components that satisfy $J_\cat{D}(\eta^0_c)=\eta^1_c$.
When $J_\cat{D}$ is an embedding, we can think of this transformation simply as having central components, in $\cat{D}_0$.

There is a bicategory $\vsqCat$ of $\vsq$-categories, $\vsq$-functors and $\vsq$-natural transformations.
This bicategory has an interesting tensor that arises as a slight modification of the funny tensor product.

\begin{defn}[Funny Tensor of $\vsq$-Categories]
  Given two $\vsq$-categories $J_\cat{C}$ and $J_\cat{D}$, their funny tensor $J_{\cat{C}\funny\cat{D}}:\cat{C}_0\otimes\cat{D}_0\morph{}\cat{C}_1\funny\cat{D}_1$ is the identity on objects functor given by the diagonal of the following pushout in $\VCat$.
  \begin{equation}\label{eq:funny}
    \begin{tikzcd}[cramped]
      \cat{C}_0\otimes\cat{D}_0 \ar[r,"J_\cat{C}\otimes 1"] \ar[d,"1\otimes J_\cat{D}"']& \cat{C}_1\otimes\cat{D}_0 \ar[d,"i_0"] \\
      \cat{C}_0\otimes\cat{D}_1 \ar[r,"i_1"'] & \cat{C}_1\funny\cat{D}_1 \arrow[ul, phantom, "\ulcorner", very near start]
    \end{tikzcd}
  \end{equation}
  The pushout exists because $\cosmos$ is cocomplete and thus $\VCat$ is also cocomplete \cite{wolff}.
  Given $\vsq$-functors $F:J_\cat{A}\morph{}J_\cat{B}$ and $G:J_\cat{C}\morph{}J_\cat{D}$ their funny tensor $F\funny G$ has components $(F\funny G)_0 = F_0\otimes G_0$ and $(F\funny G)_1=F_1\funny G_1$ given by the unique arrow induced by the pushout.
  The funny tensor is also well-behaved on $\vsq$-natural transformations because their components $J_\cat{D}(\eta^0_c)=\eta^1_c$ are central and thus interchange with all other morphisms in $\cat{C}\funny\cat{D}$.
\end{defn}

\begin{thm}\label{thm:v2cat}
  $\vsqCat$ is a monoidal 2-category under the funny tensor $\funny$.
\end{thm}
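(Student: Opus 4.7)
The plan is to transport the monoidal 2-category structure of $\VCat$ (under $\otimes$) through the pushout defining $\funny$. First, I would identify the unit as the $\vsq$-category $J_{\mathbb{I}} = \id{\mathbb{I}}$, where $\mathbb{I}$ is the unit $\cosmos$-category (one object with endo-hom $I_\cosmos$). Since both legs of the span in \eqref{eq:funny} then become the same identity map up to the coherent isomorphism $\mathbb{I}\otimes \cat{D}_0 \cong \cat{D}_0$, the pushout collapses to yield $J_{\mathbb{I}} \funny J_\cat{D} \cong J_\cat{D}$ (and symmetrically on the right), supplying the unitors.

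Next, for the associator, I would use the fact that $\otimes$ preserves colimits in each variable (as $\VCat$ is cocomplete and closed), so both iterated pushouts $(\cat{C}_1 \funny \cat{D}_1) \funny \cat{E}_1$ and $\cat{C}_1 \funny (\cat{D}_1 \funny \cat{E}_1)$ compute the colimit of the common corner diagram whose vertices are the tensors $\cat{C}_\epsilon \otimes \cat{D}_\delta \otimes \cat{E}_\gamma$ and whose arrows are induced by the $J$'s. The comparison isomorphism gives the associator, and the pentagon axiom follows because both sides are universal cocones on the corresponding four-variable diagram. The triangle axiom reduces similarly to the unit collapse described above.

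For the 2-functoriality of $\funny$ on 1-cells, the universal property of the pushout induces $F \funny G$ uniquely from the tensor of the underlying $\cosmos$-functors. The main technical point, and the subtle obstacle, is the action on $\vsq$-natural transformations. A $\vsq$-natural transformation has components satisfying $J_\cat{D}(\eta^0_c) = \eta^1_c$, so they factor through the image of $J_\cat{D}$ and are hence \emph{central} in $\cat{D}_1$. This centrality is precisely what allows $\eta \funny \eta'$ to be well-defined in $\cat{C}_1 \funny \cat{D}_1$: the middle-four interchange needed for functoriality of $\funny$ on 2-cells requires transposing a component of $\eta$ past a tensored morphism, which is legal only because that component is central. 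This is exactly why the ordinary funny tensor fails to be a 2-functor on $\VCat$, and why enriching in $\vsq$ rescues the construction. Once this has been checked, the remaining coherence conditions for a monoidal 2-category follow formally from the universal property of the pushout together with those of $\VCat$.
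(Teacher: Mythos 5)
Your proposal is correct and follows essentially the same route as the paper's proof sketch: functoriality on 1-cells via the universal property of the defining pushout, and well-definedness on 2-cells via the centrality of the components $J_\cat{D}(\eta^0_c)=\eta^1_c$, which is exactly the point the paper isolates as the reason the construction succeeds where the ordinary funny tensor on $\Cat$ fails. You additionally spell out the unitors and the associator (as a comparison of wide pushouts using cocontinuity of $\otimes$), which the paper's sketch leaves implicit but which is consistent with its argument.
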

\begin{proof}[Proof sketch]
  In Appendix \ref{proof:v2cat}.
\end{proof}


This leads to the main theorem of this section.
Theorem \ref{thm:eff_pseudo} is equivalent to the result of Román \cite{roman_promonads}: effectful categories are pseudomonoids in the bicategory of promonads, promonad homomorphisms and promonad modifications.
\begin{thm}\label{thm:eff_pseudo}
  An effectful category is a pseudomonoid in $\vsqCat_{\funny}$.
\end{thm}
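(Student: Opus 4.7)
The plan is to unfold the definition of a pseudomonoid in $\vsqCat_{\funny}$ using the pushout description of the funny tensor from Equation (\ref{eq:funny}), and match the resulting data piece-by-piece with the data of an effectful category.

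First I would analyse the multiplication. By the universal property of the pushout defining $\funny$, a $\vsq$-functor $m: J_\cat{C} \funny J_\cat{C} \morph{} J_\cat{C}$ is equivalently a $\cosmos$-functor $m_0: \cat{C}_0 \otimes \cat{C}_0 \morph{} \cat{C}_0$ together with a pair $\ell: \cat{C}_0 \otimes \cat{C}_1 \morph{} \cat{C}_1$ and $r: \cat{C}_1 \otimes \cat{C}_0 \morph{} \cat{C}_1$ of $\cosmos$-functors coinciding on $\cat{C}_0 \otimes \cat{C}_0$, in the sense that $\ell \circ (1 \otimes J) = J \circ m_0 = r \circ (J \otimes 1)$. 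A $\vsq$-functor from the monoidal unit picks out an object $i$. I would identify $m_0$ with the monoidal tensor on $\cat{C}_0$, the pair $(\ell, r)$ with the binoidal left and right actions $a \ltimes - := \ell(a,-)$ and $- \rtimes b := r(-,b)$ on $\cat{C}_1$, and $i$ with the premonoidal unit. The agreement condition is exactly strict preservation of tensor by $J$ on morphisms of $\cat{C}_0$.

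Next I would translate the associator and unitor $\vsq$-natural isomorphisms. A $\vsq$-natural transformation has components satisfying $J(\eta^0_c) = \eta^1_c$, so the $\cat{C}_1$-component is in the image of $J$ and hence central in $\cat{C}_1$. Thus the coherence isomorphisms of the pseudomonoid automatically supply the central natural isomorphisms required by the premonoidal definition, while their pentagon and triangle axioms split across levels into monoidal coherence for $\cat{C}_0$ and premonoidal coherence for $\cat{C}_1$. The converse direction is then the same dictionary run backwards: the monoidal structure on $\cat{C}_0$ furnishes $m_0$, the biactegory structure on $\cat{C}_1$ recalled earlier supplies $\ell$ and $r$, their coincidence on $\cat{C}_0 \otimes \cat{C}_0$ follows from $J$ being a strict premonoidal functor, and the central coherence isomorphisms of $\cat{C}_1$ lift uniquely to $\vsq$-natural isomorphisms.

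The main obstacle is verifying that the pair $(\ell, r)$ truly induces a well-defined $\cosmos$-functor out of the pushout $\cat{C}_1 \funny \cat{C}_1$, and that the associativity $\vsq$-natural isomorphism between $m \circ (m \funny 1)$ and $m \circ (1 \funny m)$ is well-typed — both steps rely crucially on the fact that morphisms in the image of $J$ are central, so that they interchange with every morphism of $\cat{C}_1$ and no additional relations beyond those forced on $J\cat{C}_0 \otimes \cat{C}_0$ and $\cat{C}_0 \otimes J\cat{C}_0$ are imposed. Once this centrality bookkeeping is in place, the remaining axioms are routine checks matching the two layers via $J(\eta^0_c) = \eta^1_c$.
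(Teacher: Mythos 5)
Your proposal is correct and follows essentially the same route as the paper's own proof sketch: decompose the multiplication $\vsq$-functor via the pushout defining $\funny$ into a monoidal structure on $\cat{C}_0$ and left/right actions on $\cat{C}_1$ (the paper's $\ltimes := \boxtimes i_1$, $\rtimes := \boxtimes i_0$), and observe that the coherence $\vsq$-isomorphisms have components in the image of $J$, hence central, with the pseudomonoid axioms splitting across the two levels. The only cosmetic difference is that the paper phrases the $\cat{C}_1$-level data as a $\cat{C}_0$-$\cat{C}_0$-biactegory extending the canonical self-action, whereas you read off the binoidal/premonoidal structure directly; these are equivalent presentations of the same dictionary.
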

\begin{proof}[Proof sketch]
  A pseudomonoid in $\vsqCat_{\funny}$ consists of a $\vsq$-category $J:\cat{C}_0\morph{}\cat{C}_1$ equipped with $\vsq$-functors $\boxtimes:J\funny J\morph{}J$ and $I:1\morph{}J$, such that there are $\vsq$-natural isomorphisms $\boxtimes(\boxtimes\otimes 1)\overset{\alpha}{\cong}\boxtimes(1\otimes\boxtimes)\mbox{ and } \boxtimes(I\otimes 1)\overset{\lambda}{\cong} 1 \overset{\rho}{\cong} \boxtimes(1\otimes I)$.

  Note that $\boxtimes$ consists of two functors $\boxtimes_0:\cat{C}_0\otimes\cat{C}_0\morph{}\cat{C}_0$ and $\boxtimes_1:\cat{C}_1\funny\cat{C}_1\morph{}\cat{C}_1$ such that $J\boxtimes_0 = \otimes_1J_{\cat{C}\funny\cat{C}}$.
  $\boxtimes_0$ together with $I_0$ and the natural isomorphisms $\alpha_0,\rho_0$ and $\lambda_0$, give a monoidal structure on $\cat{C}_0$.

  The $\cat{C}_0$-biaction on $\cat{C}_1$ is given by the compositions $\ltimes := \boxtimes i_1$ and $\rtimes := \boxtimes i_0$.
  That $J$ preserves the canonical actions given by $\otimes_0$ on $\cat{C}_0$ follows by the diagram \eqref{eq:funny} and the equality $J\otimes_0 = \otimes_1J_{\cat{C}\funny\cat{C}}$, together with the fact that $\alpha_1,\rho_1$ and $\lambda_1$ have components in the image of $J$.
  The coherence equations of the biaction are a consequence of those of $\alpha_1,\rho_1$ and $\lambda_1$: for instance $\alpha_1$ is a natural isomorphism between functors with type $\cat{C}_1\funny\cat{C}_1\funny\cat{C}_1\morph{}\cat{C}_1$.\
  This amounts to ``separate'' naturality in each $\cat{C}_1$ of the domain which in turns induces the left, bimodule and right coherences for the biaction.
\end{proof}

\begin{thm}
 There is an equivalence of bicategories $\vsqCat_{\funny}\cong\cosmos\text{-}\mathsf{Promonad}$ between the bicategories of $\vsq$-categories under the funny tensor product and the bicategory of promonads.
\end{thm}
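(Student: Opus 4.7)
The plan is to construct pseudofunctors in both directions and verify they are mutually inverse. The key observation is that an identity-on-objects functor $J : \cat{C}_0 \morph{} \cat{C}_1$ carries precisely the data of a promonad on $\cat{C}_0$ via the Kleisli construction, as already alluded to at the end of Section \ref{sec:premon_effectful}.

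First I would define a pseudofunctor $K : \vsqCat_\funny \morph{} \cosmos\text{-}\mathsf{Promonad}$. On objects, $K$ sends $J : \cat{C}_0 \morph{} \cat{C}_1$ to the promonad $T_J := \cat{C}_1(J-,J-) : \cat{C}_0 \pmorph \cat{C}_0$, with multiplication induced by composition in $\cat{C}_1$ and unit induced by the hom-action of $J$. On 1-cells, a $\vsq$-functor $(F_0,F_1) : J_\cat{C} \morph{} J_\cat{D}$ satisfying $F_1 J_\cat{C} = J_\cat{D} F_0$ yields a promonad morphism whose underlying functor is $F_0$ and whose structural 2-cell $T_{J_\cat{C}}(a,b) \morph{} T_{J_\cat{D}}(F_0 a, F_0 b)$ is given by the action of $F_1$ on homs; compatibility with the promonad multiplication and unit is immediate from functoriality of $F_1$ together with the constraint $F_1 J_\cat{C} = J_\cat{D} F_0$. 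On 2-cells, a $\vsq$-natural transformation $(\eta^0,\eta^1)$ satisfying $J_\cat{D}(\eta^0) = \eta^1$ translates directly into a promonad modification, because the factoring condition is precisely the axiom demanded.

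Conversely, the pseudoinverse $K'$ sends a promonad $T$ on $\cat{C}$ to the identity-on-objects Kleisli embedding $\cat{C} \morph{} \kl{T}$, a promonad morphism $(F,\alpha)$ to the $\vsq$-functor with components $F$ and the evident Kleisli extension, and promonad modifications to the corresponding $\vsq$-natural transformations. One checks $K K' \cong \mathrm{id}$ on the nose; and $K' K \cong \mathrm{id}$ because for $J : \cat{C}_0 \morph{} \cat{C}_1$ the Kleisli category of $T_J$ has hom-objects $T_J(a,b) = \cat{C}_1(Ja, Jb) = \cat{C}_1(a,b)$ (since $J$ is identity on objects), so the Kleisli embedding is canonically isomorphic to $J$.

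The main obstacle is verifying local full faithfulness, specifically that the composition of $\vsq$-functors corresponds to the horizontal composition of promonad morphisms. The latter is defined via a coend pasting of profunctors, whereas the former is just composition of ordinary functors; one must unwind the coend and observe that, under the identification $T_J(a,b) \cong \cat{C}_1(a,b)$, it collapses to composition in $\cat{C}_1$. With this verified, the remaining coherence data for both pseudofunctors is routine bookkeeping. One may further check that $K$ intertwines the funny tensor of $\vsqCat$ with the convolution-style tensor on promonads, whereupon combining this equivalence with Theorem \ref{thm:eff_pseudo} recovers the pseudomonoid characterisation of effectful categories due to Román \cite{roman_promonads}.
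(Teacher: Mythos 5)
Your proposal is correct and supplies exactly the unwinding that the paper's proof sketch defers to \cite{roman_promonads}: the mutually inverse pseudofunctors are the Kleisli construction and $J\mapsto\cat{C}_1(J-,J-)$, and your treatment of 1- and 2-cells matches the promonad homomorphisms and modifications there, so this is essentially the same approach. (Two minor points: what you call ``local full faithfulness'' is really the compatibility of the pseudofunctor with horizontal composition of 1-cells rather than the bijectivity on 2-cells, and the paper notes that the relevant monoidal structure on promonads is the ``pure tensor'' of \cite{roman_promonads}, which is what your final intertwining check with $\funny$ must target.)
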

\begin{proof}[Proof sketch]
  The result follows upon unwinding the definitions \cite{roman_promonads} and comparing the rest of the definitions there with those of the present section. Promonads use the ``pure tensor'' in \cite{roman_promonads}.
\end{proof}

\section{Closed Effectful Categories}\label{sec:closed_premon}
Now that we have a thorough understanding of effectful categories, we can start to work towards their ``pro-'' analogue.
To start, recall that a promonoidal category is equivalently a \textit{closed} monoidal presheaf category.
This suggests we should turn our attention to the closure of effectful categories, which will be the focus of this section.

Power gave the following definition of closure for effectful categories, where there is still an adjunction between tensoring and the internal-hom, but only for the centre \cite{power_structure}.

\begin{defn}[Closed Effectful Category \cite{power_structure}]
  An effectful category $J:\cat{C}_0\morph{}\cat{C}_1$ is right-closed when for each object $X$, $J(-)\otimes X:\cat{C}_0\morph{}\cat{C}_1$ has a right adjoint $[X,-]:\cat{C}_1\morph{}\cat{C}_0$.
  An effectful category is left-closed when for each $X$, $X\otimes J(-):\cat{C}_0\morph{}\cat{C}_1$ has a right adjoint.
  We say an effectful category is closed if it is both left and right-closed.
\end{defn}

Power proved the following result which generalises Day's result that every monoidal category embeds into a closed monoidal category \cite{day}.

\begin{thm}[\cite{power_structure}]
  Every (small) effectful category embeds into a closed effectful category.
\end{thm}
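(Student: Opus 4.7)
The plan is to mimic Day's original construction \cite{day} in the $\vsq$-enriched setting, leveraging Theorem \ref{thm:eff_pseudo} that recasts an effectful category as a pseudomonoid in $\vsqCat_\funny$. Given a small effectful category $J:\cat{C}_0\morph{}\cat{C}_1$, I would first form the $\vsq$-enriched presheaf category $\widehat{J} := [J^{\text{op}},\vsq]$. Unpacking this yields a pair of ordinary presheaf categories $\widehat{\cat{C}_0}$ and $\widehat{\cat{C}_1}$ connected by an identity-on-objects functor $\widehat{J}:\widehat{\cat{C}_0}\morph{}\widehat{\cat{C}_1}$ induced by precomposition with $J$. The enriched Yoneda embedding $\yo:J\morph{}\widehat{J}$ is a fully faithful strict $\vsq$-functor and provides the sought-after embedding of underlying $\vsq$-categories.

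Next I would equip $\widehat{J}$ with a Day-style convolution $\boxtimes:\widehat{J}\funny\widehat{J}\morph{}\widehat{J}$ defined componentwise by the familiar coend
\begin{equation*}
(F\boxtimes G)(c) \;=\; \int^{a,b} J(c,a\boxtimes b)\otimes F(a)\otimes G(b),
\end{equation*}
interpreted in $\vsq$ using the pseudomonoid $\boxtimes$ on $J$. The coherence isomorphisms $\alpha,\lambda,\rho$ on $J$ lift via routine coend calculations to coherences on $\widehat{J}$, producing a pseudomonoid in $\vsqCat_\funny$ — that is, by Theorem \ref{thm:eff_pseudo}, an effectful structure on $\widehat{J}$. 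The co-Yoneda lemma gives $\yo(a)\boxtimes\yo(b)\cong\yo(a\boxtimes b)$, from which it follows that $\yo:J\morph{}\widehat{J}$ is a strict effectful functor.

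For closure, the key observation is that $\widehat{J}$ is $\vsq$-cocomplete and the coend definition shows that $\boxtimes$ preserves small colimits in each variable separately. Restricting to central objects $\yo(Jx)\in\widehat{J}$, the functor $\yo(Jx)\boxtimes(-):\widehat{\cat{C}_0}\morph{}\widehat{\cat{C}_1}$ is cocontinuous and between locally presentable $\vsq$-categories, so the enriched special adjoint functor theorem yields a right adjoint $[\yo(Jx),-]:\widehat{\cat{C}_1}\morph{}\widehat{\cat{C}_0}$. By density of representables, one extends this to right adjoints for $\widehat{J}(-)\boxtimes H$ at arbitrary $H\in\widehat{\cat{C}_1}$, and the symmetric argument produces the left-closure, so $\widehat{J}$ becomes a closed effectful category into which $J$ embeds.

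The main obstacle is the careful bookkeeping of centrality. Classical Day convolution manipulates coends freely using the interchange law, but here every such manipulation must remain within the central part. What makes this work is precisely the $\vsq$-enrichment: the pseudomonoid coherences on $J$ are $\vsq$-natural, meaning their $\cat{C}_1$-components are images of their $\cat{C}_0$-components under $J$, and this centrality is preserved by coends taken componentwise in $\vsq$. Verifying that the induced coherence data on $\widehat{J}$ is again $\vsq$-natural — and hence central in the premonoidal component — is the technical heart of the argument, after which the rest of the Day machinery transfers mutatis mutandis.
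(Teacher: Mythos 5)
Your overall strategy (Yoneda embedding plus a Day-style convolution) is the right family of ideas, but there is a genuine gap at the very first step: the target category is misidentified. The free $\vsq$-cocompletion $[\opcat{J},\vsq]$ does not unpack to an identity-on-objects functor $\widehat{\cat{C}_0}\morph{}\widehat{\cat{C}_1}$. Those two presheaf categories do not have the same objects, and precomposition with $J$ is a functor $\widehat{\cat{C}_1}\morph{}\widehat{\cat{C}_0}$ in the opposite direction, not an identity-on-objects functor. What $[\opcat{J},\vsq]$ actually consists of is triples $(P_0,P_1,\eta\colon P_0\Rightarrow P_1\opcat{J})$ as objects of \emph{both} components, with the loose component merely forgetting the $\phi_0$ part of a transformation (cf.\ Proposition \ref{prop:v2profs}). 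The correct receptacle --- and this is exactly the subtlety the paper flags at the start of the section on $\vsq$-profunctors --- is the free \emph{tight} cocompletion: the bijective-on-objects part $\lan^L_{\opcat{J}}:\widehat{\cat{C}_0}\morph{}\overline{\cat{C}_1}$ of the bo-ff factorisation of $\lan_{\opcat{J}}$, equivalently the Kleisli construction $\widehat{\cat{C}_0}\morph{}\kl{\widehat{T}}$ for the cocontinuous lift of the strong promonad $T=\cat{C}_1(J-,J-)$. Here $\overline{\cat{C}_1}$ has the same objects as $\widehat{\cat{C}_0}$ (presheaves on $\cat{C}_0$) with homs $\widehat{\cat{C}_1}(\lan_{\opcat{J}}F,\lan_{\opcat{J}}G)$, so an identity-on-objects functor genuinely exists. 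Relatedly, your convolution coend must have its bound variables ranging over the central part $\cat{C}_0$ only; a coend over $\cat{C}_1$ in both variables would secretly invoke interchange.

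Once the target is corrected, the remainder simplifies relative to your sketch: $\widehat{\cat{C}_0}$ is closed monoidal under ordinary Day convolution, $\overline{\cat{C}_1}\cong\kl{\widehat{T}}$ inherits a premonoidal structure from the strength of $\widehat{T}$, and closure does not need the adjoint functor theorem --- it follows because $\lan_{\opcat{J}}$ is left adjoint to restriction along $J$ and the bo-ff factorisation preserves this, so $\lan^L_{\opcat{J}}(-)\boxtimes X\cong\lan^L_{\opcat{J}}(-\otimes X)$ is a composite of left adjoints. Your route via SAFT on representables plus ``density'' would in any case still leave you to produce a right adjoint for $\widehat{J}(-)\boxtimes H$ at \emph{every} object $H$ of the target, which is what the definition of closed effectful category demands.
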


We say that an effectful category $J:\cat{C}_0\morph{}\cat{C}_1$ is \textit{small} when both $\cat{C}_0$ and $\cat{C}_1$ are small.
Given small $J$, we can take the strong promonad $T(-,-):=\cat{C}_1(J-,J-):\cat{C}_0\pmorph\cat{C}_0$ and lift it to a strong cocontinuous monad on the presheaf category $\widehat{T}:\widehat{\cat{C}_0}\morph{}\widehat{\cat{C}_0}$.
The Kleisli category $\kl{\widehat{T}}$ has as objects presheaves $F:\opcat{\cat{C}_0}\morph{}\cosmos$ and homs $\kl{\widehat{T}}(F,G) = \widehat{\cat{C}_0}(F,\widehat{T}G)$.
Moreover, $\widehat{\cat{C}_0}$ is monoidal under Day convolution while $\kl{\widehat{T}}$ is premonoidal.
As a result there is an effectful category given by the identity on objects functor $\widehat{\cat{C}_0}\morph{}\kl{\widehat{T}}$.

Power gave another characterisation of the effectful category $\widehat{\cat{C}_0}\morph{}\kl{\widehat{T}}$ as the free \textit{tight} cocompletion of the $\vsq$-category $J:\cat{C}_0\morph{}\cat{C}_1$ - that is, the cocompletion in only $\cosmos$-colimits, not all $\vsq$-colimits.
In the case of $\cosmos=\set$  these are precisely the ``conical'' colimits.
The name ``tight'' was first suggested in \cite{lack_shulman} where the theory of categories enriched in $\Cat^2$ is studied in some detail.

\begin{thm}[\cite{power_generic}]
  The free tight cocompletion of a small $\vsq$-category $J:\cat{C}_0\morph{}\cat{C}_1$ is the bijective on objects functor $\lan_{\opcat{J}}^L:\widehat{\cat{C}_0}\morph{}\overline{\cat{C}_1}$ induced by the functor $\lan_{\opcat{J}}: \widehat{\cat{C}_0}\morph{}\widehat{\cat{C}_1}$, via its canonical factorisation into a bijective on objects functor followed by a fully faithful functor (its bo-ff factorisation).
\end{thm}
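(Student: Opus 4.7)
The plan is to verify that $\lan_{\opcat{J}}^L$ satisfies the universal property of the free tight cocompletion, namely that it is initial among $\vsq$-functors from $J$ into tightly cocomplete $\vsq$-categories. Here a $\vsq$-category $K:\cat{D}_0\morph{}\cat{D}_1$ is tightly cocomplete when both $\cat{D}_0$ and $\cat{D}_1$ admit all $\cosmos$-weighted colimits and the structure functor $K$ preserves them. The unit of the cocompletion is the pair comprising the Yoneda embedding $\cat{C}_0\morph{}\widehat{\cat{C}_0}$ and the factored embedding $\cat{C}_1\morph{}\overline{\cat{C}_1}$ obtained by restricting the Yoneda embedding of $\cat{C}_1$.

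First, I would justify the construction itself. The lower component $\widehat{\cat{C}_0}$ is the free $\cosmos$-cocompletion of $\cat{C}_0$ by the classical Yoneda theorem. To lift $J$, form the left Kan extension $\lan_{\opcat{J}}:\widehat{\cat{C}_0}\morph{}\widehat{\cat{C}_1}$ of the composite $y_{\cat{C}_1}\circ J$ along $y_{\cat{C}_0}$. Because $\widehat{\cat{C}_1}$ carries more objects than $\widehat{\cat{C}_0}$, this is not yet a $\vsq$-category; the canonical bijective-on-objects/fully-faithful factorisation in $\VCat$, which exists because $\VCat$ admits this orthogonal factorisation system, produces the identity-on-objects surjection $\lan_{\opcat{J}}^L$ together with the fully faithful inclusion $\overline{\cat{C}_1}\hookrightarrow\widehat{\cat{C}_1}$. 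A short check shows that $\overline{\cat{C}_1}$ inherits $\cosmos$-cocompleteness from $\widehat{\cat{C}_1}$ by computing colimits of images under $\lan_{\opcat{J}}^L$ via density of representables in $\widehat{\cat{C}_0}$ and verifying they agree with colimits in $\widehat{\cat{C}_1}$ via the ff inclusion.

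Second, to establish the universal property, consider a tightly cocomplete $K:\cat{D}_0\morph{}\cat{D}_1$ and a $\vsq$-functor $F:J\morph{}K$. The lower component $F_0:\cat{C}_0\morph{}\cat{D}_0$ extends uniquely to a cocontinuous $\widetilde{F_0}:\widehat{\cat{C}_0}\morph{}\cat{D}_0$ by the classical Yoneda extension theorem. The composite $K\circ\widetilde{F_0}:\widehat{\cat{C}_0}\morph{}\cat{D}_1$ is then a cocontinuous extension of $F_1\circ J$, and by the universal property of $\lan_{\opcat{J}}$ together with cocontinuity of $K\circ\widetilde{F_0}$, orthogonality of bo and ff produces a unique diagonal $\widetilde{F_1}:\overline{\cat{C}_1}\morph{}\cat{D}_1$ satisfying $\widetilde{F_1}\circ\lan_{\opcat{J}}^L = K\circ\widetilde{F_0}$. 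The pair $(\widetilde{F_0},\widetilde{F_1})$ assembles to a $\vsq$-functor since both components share the same bijective-on-objects behaviour via $\lan_{\opcat{J}}^L$; uniqueness of the extension follows from density of representables in $\widehat{\cat{C}_0}$ and fullness of the inclusion $\overline{\cat{C}_1}\hookrightarrow\widehat{\cat{C}_1}$.

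The main obstacle will be orchestrating the bo-ff factorisation compatibly with tight cocompleteness. One must ensure that $\overline{\cat{C}_1}$ is closed in $\widehat{\cat{C}_1}$ under the relevant $\cosmos$-weighted colimits, so that the fully faithful inclusion preserves and reflects them, and that the diagonal fillers produced by orthogonality genuinely preserve cocontinuity rather than merely commuting as functors. These reduce to standard manipulations with dense subcategories and Kan extensions, but require careful bookkeeping because the $\vsq$-enrichment forces every construction on the upper component to be matched by a coherent one on the lower component.
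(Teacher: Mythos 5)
The paper does not prove this statement---it is imported from Power \cite{power_generic}, with the universal property made precise only afterwards via the notion of \emph{tight} colimit---so your attempt has to be judged on its own terms, and there it has a genuine gap: your definition of ``tightly cocomplete'' is not the right one, and the proof collapses at exactly the point where this matters. You require that \emph{both} $\cat{D}_0$ and $\cat{D}_1$ admit all $\cosmos$-weighted colimits. But a tight colimit is a colimit of a diagram and cocone lying in the tight part $\cat{D}_0$, whose universal property is demanded against both the $\cat{D}_0$-homs and the $\cat{D}_1$-homs; nothing requires $\cat{D}_1$ to admit colimits of arbitrary diagrams of loose morphisms. Under your stronger definition the theorem is simply false: $\overline{\cat{C}_1}\cong\kl{\widehat{T}}$ is the Kleisli category of a cocontinuous monad on $\widehat{\cat{C}_0}$, and Kleisli categories generally fail to have, e.g., coequalizers of parallel pairs of loose (non-free) morphisms. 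Correspondingly, your ``short check'' that $\overline{\cat{C}_1}$ inherits $\cosmos$-cocompleteness from $\widehat{\cat{C}_1}$ cannot succeed: $\overline{\cat{C}_1}$ is the \emph{full image} of $\lan_{\opcat{J}}$, and a diagram in it consists of morphisms $\lan_{\opcat{J}}F\morph{}\lan_{\opcat{J}}G$ that need not be of the form $\lan_{\opcat{J}}\phi$, so its colimit in $\widehat{\cat{C}_1}$ need not land back in the image. Only colimits of diagrams lifted along the bijective-on-objects functor $\lan^L_{\opcat{J}}$ are guaranteed to survive, and that is precisely what ``tight'' encodes.

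The second, related, problem is the appeal to bo-ff orthogonality to manufacture $\widetilde{F_1}:\overline{\cat{C}_1}\morph{}\cat{D}_1$. Orthogonality produces a \emph{unique diagonal filler for an already commuting square} whose left leg is bo and whose right leg is ff; it does not let you push the functor $K\circ\widetilde{F_0}:\widehat{\cat{C}_0}\morph{}\cat{D}_1$ forward along the bo map $\lan^L_{\opcat{J}}$, because there is no ff right leg and no pre-existing commuting square---the map out of $\overline{\cat{C}_1}$ is exactly what you are trying to build. The actual construction must define $\widetilde{F_1}$ on the loose homs $\overline{\cat{C}_1}(F,G)=\widehat{\cat{C}_1}(\lan_{\opcat{J}}F,\lan_{\opcat{J}}G)\cong\widehat{\cat{C}_0}(F,\widehat{T}G)$ by writing $F$ as a tight colimit of representables and using that $K$ preserves tight colimits (equivalently, by the Kleisli presentation: a functor out of $\kl{\widehat{T}}$ is a functor out of $\widehat{\cat{C}_0}$ together with an action on effects). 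This is the content of the Kelly-style equivalence $\lan_{y^L}:[J_\cat{C},J_\cat{D}]\cong\mathsf{Cocont}_{tight}(\overline{J_\cat{C}},J_\cat{D})$ that the paper states immediately after the present theorem; your outline would become correct if you replaced your notion of tight cocompleteness with this one and replaced the orthogonality step with the colimit-presentation argument.
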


The category $\overline{\cat{C}_1}:=\Im(\lan_{\opcat{J}})$ given by the bo-ff factorisation of $\lan_{\opcat{J}}$ has as objects presheaves $F:\opcat{\cat{C}_0}\morph{}\cosmos$ and as homs $\overline{\cat{C}_1}(F,G) = \widehat{\cat{C}_1}(\lan_{\opcat{J}} F, \lan_{\opcat{J}} G)$.
By the adjunction between extension and restriction of presheaves along $J$ there is a natural isomorphism
\begin{equation*}
  \kl{\widehat{T}}(F,G) = \widehat{\cat{C}_0}(F,\widehat{T}G) \cong \widehat{\cat{C}_1}(\lan_{\opcat{J}} F, \lan_{\opcat{J}} G) = \overline{\cat{C}_1}(F,G)
\end{equation*}
Thus to give a natural transformation $F \Rightarrow \widehat{T}G$ is equivalent to giving one $\lan_{\opcat{J}}F \Rightarrow \lan_{\opcat{J}}G$.
This demonstrates an isomorphism $\overline{\cat{C}_1} \cong \kl{\widehat{T}}$

As a consequence, the following diagram commutes,
giving a factorisation $(y^L,y^R)$ of the Yoneda embedding $y:J\morph{}[\opcat{J},\vsq]$, via the free $\cosmos$-cocompletion.
\begin{equation*}
  \begin{tikzcd}[cramped]
    \cat{C}_0 \ar[r,"J"] \ar[d,"y_0^L"] \ar[dd,bend right=50,"y_0"'] & \cat{C}_1 \ar[d,"y_1^L"] \ar[dd,bend left=50,"y_1"] \\
    \widehat{\cat{C}_0} \ar[r,"\lan_{\opcat{J}}^L"] \ar[d,"y_0^R"] & \overline{\cat{C}_1} \ar[d,"y_1^R"] \\
    {[\opcat{J},\vsq]}_0 \ar[r] & {[\opcat{J},\vsq]}_1
  \end{tikzcd}
\end{equation*}

So we now have an effectful category $\lan_{\opcat{J}}^L$ into which $J$ embeds.
The last thing to do is to check that it is closed, which follows by noting that $\lan_{\opcat{J}}$ is left adjoint to the functor which restricts presheaves along $J$, and taking bo-ff factorisations ensures that $\lan_{\opcat{J}}^L$ is also a left adjoint \cite{power_premonoidal}.

\section{$\vsq$-Profunctors}
In the previous Section we studied the notion of closure for effectful categories.
At this point we could stop and define \textit{pro}-effectful categories as ``closed effectful presheaf categories'' in analogy to promonoidal categories.
In fact, this definition is more subtle than it might first appear and it is certainly worth taking a little care.
In particular, given that the closed effectful embedding of any effectful category $J$ is given by the free \textit{tight} cocompletion $\lan_{\opcat{J}}^L$ and not the free cocompletion $[\opcat{J},\vsq]$, we must take care of what we mean by ``presheaf'' category here.
Furthermore, we would like to place pro-effectful categories on the same footing as promonoidal categories - as pseudomonoids in some form of bicategory of profunctors.

This will be the aim of this Section; to study the structure of $\vsq$-profunctors $P:\opcat{J}_\cat{D}\boxtimes J_\cat{C}\morph{}\vsq$.
By the following result we are able to unpack $P$ into a pair of $\cosmos$-profunctors together with a natural transformation between them.
The $\vsq$-natural transformations $\phi:P\Rightarrow Q$ can also be similarly unpacked.

\begin{prop}\label{prop:v2profs}
  Let $P:\opcat{J}_\cat{D}\otimes J_\cat{C}\morph{}\vsq$ be a $\vsq$-profunctor.
  Then $P$ is a triple of:
  \begin{enumerate}
    \item a $\cosmos$-profunctor $P_0:\opcat{\cat{D}_0}\otimes\cat{C}_0\morph{}\cosmos$,
    \item a $\cosmos$-profunctor $P_1:\opcat{\cat{D}_1}\otimes\cat{C}_1\morph{}\cosmos$,
    \item a $\cosmos$-natural transformation $\eta:P_0\Rightarrow P_1(\opcat{J}\otimes J)$.
  \end{enumerate}
  A $\vsq$-natural transformation $\phi:P\Rightarrow Q$ consists of $\cosmos$-natural transformations $\phi_0:P_0\Rightarrow Q_0$ and $\phi_1:P_1\Rightarrow Q_1$ such that $(\phi_1(\opcat{J}\otimes J)) \eta^P = \eta^Q \phi_0$.
\end{prop}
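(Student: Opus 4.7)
The plan is to unpack the $\vsq$-enriched structure of $P$ componentwise using the fact that $\vsq = \cosmos^{\to}$, so that every object of $\vsq$ is literally an arrow in $\cosmos$ and every morphism of $\vsq$ is a commutative square in $\cosmos$. By the pointwise construction of the tensor on $\vsqCat$ (Theorem~\ref{thm:v2cat}), the source $\opcat{J}_\cat{D} \otimes J_\cat{C}$ is the $\vsq$-category given by the identity-on-objects $\cosmos$-functor $\opcat{J}_\cat{D} \otimes J_\cat{C} : \opcat{\cat{D}_0} \otimes \cat{C}_0 \to \opcat{\cat{D}_1} \otimes \cat{C}_1$, so it suffices to establish the general decomposition for $\vsq$-functors $F : \cat{A} \to \vsq$ out of an arbitrary $\vsq$-category $\cat{A}$.

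On objects, $F(a) \in \vsq$ is an arrow $F_0(a) \to F_1(a)$ in $\cosmos$, which I take to be the component $\eta_a$. On homs, the $\vsq$-morphism $F_{a,b} : \cat{A}(a,b) \to \vsq(F(a), F(b))$ is a morphism in the underlying arrow-category of $\vsq$, i.e., a commutative square in $\cosmos$. Identifying the self-enriched hom $\vsq(F(a),F(b))$ as the arrow from the object of commutative squares from $F(a)$ to $F(b)$ to $[F_1(a), F_1(b)]$, this square unfolds into (i) a $\cosmos$-morphism $\cat{A}_0(a,b) \to [F_0(a), F_0(b)]$ supplying the hom-action of $F_0 : \cat{A}_0 \to \cosmos$; (ii) a $\cosmos$-morphism $\cat{A}_1(a,b) \to [F_1(a), F_1(b)]$ supplying the hom-action of $F_1 : \cat{A}_1 \to \cosmos$; and (iii) a commutativity condition that is precisely the naturality of $\eta : F_0 \Rightarrow F_1 J_\cat{A}$. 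The unit and composition axioms of $\vsq$-functoriality then split, level by level, into the unit and composition axioms for $F_0$ and $F_1$. Specialising $\cat{A}$ to $\opcat{J}_\cat{D} \otimes J_\cat{C}$ yields the three items of the proposition, since $\cosmos$-functors from $\opcat{\cat{D}_i} \otimes \cat{C}_i$ to $\cosmos$ are precisely $\cosmos$-profunctors.

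For the statement on $\vsq$-natural transformations, I apply the same analysis to morphisms. A $\vsq$-natural family $\phi_a : P(a) \to Q(a)$ consists of morphisms in $\vsq$, i.e., commutative squares in $\cosmos$ whose two sides are the components $(\phi_0)_a$ and $(\phi_1)_a$, and whose commutativity is the equation $(\phi_1(\opcat{J}\otimes J))\eta^P = \eta^Q \phi_0$. The $\vsq$-naturality of $\phi$ then separates into the $\cosmos$-naturality of $\phi_0$ and $\phi_1$ individually.

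The main obstacle is the explicit identification of the self-enriched hom of $\vsq$ and the verification that a $\vsq$-morphism into it disentangles cleanly into the two hom-actions and the naturality square for $\eta$; in particular, one must check that composition of $\vsq$-morphisms at the level of $\vsq$ is compatible with composition in $\cosmos$ on each side. Once this description of the enrichment is pinned down, both the functor and transformation decompositions follow by routine unpacking.
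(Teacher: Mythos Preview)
Your proposal is correct and follows essentially the same route as the paper, which simply defers the unpacking to a cited result of Power (Prop.~24 of \cite{power_generic}) applied to the functor category $[\opcat{J}_\cat{D}\otimes J_\cat{C},\vsq]$; you carry out that componentwise analysis explicitly. One small slip: the pointwise description of $\opcat{J}_\cat{D}\otimes J_\cat{C}$ comes from the ordinary $\vsq$-enriched tensor of categories (using that the monoidal structure on $\vsq$ is pointwise, cf.\ Proposition~\ref{prop:v2cosmos}), not from Theorem~\ref{thm:v2cat}, which is about the funny tensor $\funny$.
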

\begin{proof}
  This follows by applying a $\cosmos$-enriched version of a result by Power \cite[Prop. 24]{power_generic} to the functor category $[\opcat{J}_\cat{D}\otimes J_\cat{C},\vsq]\cong\Prof(J_\cat{C},J_\cat{D})$.
\end{proof}

The next proposition demonstrates that the coend of a $\vsq$-profunctor $P$ is given by the coends of $P_0$ and $P_1$ together with a canonical arrow between them.

\begin{prop}\label{prop:v2coend}
  Let $P:\opcat{J}\otimes J\morph{}\vsq$ be a $\vsq$-endoprofunctor.
  Then the coend $\int^c P(c,c)$ is given by the arrow $\int^c P_0(c,c) \morph{} \int^c P_1(c,c)$ induced by $\eta$ and the adjunction $y_J\dashv y^J$ in $\VProf$.
\end{prop}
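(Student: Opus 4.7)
The plan is to compute the $\vsq$-coend pointwise, exploiting that $\vsq$ is a cocomplete $\cosmos$-enriched presheaf category over the walking arrow, and to identify the connecting morphism of the resulting $\vsq$-object with the one obtained from $\eta$ via the adjunction $y_J \dashv y^J$ in $\VProf$.

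First, I would use Proposition~\ref{prop:v2profs} to unpack $P$ as a triple $(P_0, P_1, \eta)$, where $\eta: P_0 \Rightarrow P_1(\opcat{J} \otimes J)$. By Proposition~\ref{prop:v2cosmos} the $\vsq$-coend exists, and I would present it as the standard coequalizer of $\vsq$-tensors $P(c',c) \otimes J_\cat{C}(c,c')$ modulo dinaturality, living in $\vsq$.

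Next, since $\vsq$ is the $\cosmos$-enriched presheaf category on the walking arrow, the two evaluation $\cosmos$-functors $\mathrm{ev}_0, \mathrm{ev}_1: \vsq \to \cosmos$ are cocontinuous and jointly conservative. Applying them to the above coequalizer: at position $0$ the hom $J_\cat{C}(c,c')$ restricts to $\cat{C}_0(c,c')$ and $P(c',c)$ to $P_0(c',c)$, recovering the $\cosmos$-coend $\int^{c \in \cat{C}_0} P_0(c,c)$; at position $1$ they restrict to $\cat{C}_1(c,c')$ and $P_1(c',c)$, giving $\int^{c \in \cat{C}_1} P_1(c,c)$. These are the source and target of the $\cosmos$-arrow underlying the $\vsq$-coend.

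Finally, I would identify the connecting arrow. The components $\eta_{c,c}: P_0(c,c) \to P_1(Jc, Jc) = P_1(c,c)$ (using that $J$ is identity on objects), composed with the coend inclusions $P_1(c,c) \to \int^{c \in \cat{C}_1} P_1(c,c)$, form a $\cat{C}_0$-dinatural family, where dinaturality follows from the $\cosmos$-naturality of $\eta$ together with the coend identifications in $\cat{C}_1$ along morphisms $Jf$; by the universal property this descends to an arrow $\int^{c \in \cat{C}_0} P_0(c,c) \to \int^{c \in \cat{C}_1} P_1(c,c)$. The formal packaging of this construction is exactly the transpose of $\eta$ under $y_J \dashv y^J$ in $\VProf$ followed by the coend functor. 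The main obstacle is the careful bookkeeping at position $1$: verifying that both coequalizer legs evaluate there to the canonical $\cat{C}_1$-action of $P_1$ against $\cat{C}_1$-homs, so that the pointwise computation really yields the $\cat{C}_1$-coend and that the induced connecting arrow coincides with the transposed $\eta$; once this matching is in hand, the rest is a formal assembly of Propositions~\ref{prop:v2cosmos} and~\ref{prop:v2profs}.
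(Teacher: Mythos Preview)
Your proposal is correct and closely parallel to the paper's argument, though the framing differs slightly. The paper verifies the universal property directly: it takes an arbitrary $\vsq$-extranatural family $w_c:P(c,c)\to d$, observes that its components $w^0$ and $w^1$ are $\cosmos$-extranatural (over $\cat{C}_0$ and $\cat{C}_1$ respectively), and factors them through the two $\cosmos$-coends; the connecting arrow is then obtained exactly as you describe, by noting that $\eta_{cc}$ followed by the coprojection into $\int^c P_1(c,c)$ is $\cat{C}_0$-extranatural. Your route instead invokes the general fact that colimits in $\vsq=[\to,\cosmos]$ are computed pointwise and that the evaluation functors are cocontinuous, which packages the same verification more abstractly and arguably more completely (the paper's sketch leaves uniqueness implicit). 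Either way, the substantive step---matching the position-$1$ coequalizer with the $\cat{C}_1$-coend using that $J$ is identity on objects, and identifying the connecting map with the one induced by $\eta$---is the same in both.
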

\begin{proof}
  In Appendix \ref{proof:v2coend}.
\end{proof}

$\vsq$-endoprofunctors and the $\vsq$-natural transformations assemble into a $\vsq$-category $[\opcat{J}\otimes J,\vsq]\cong\Prof(J,J)$.
The category $\Prof(J,J)_0$ consists of the $\vsq$-profunctors and $\vsq$-natural transformations as outlined in Proposition \ref{prop:v2profs}, while $\Prof(J,J)_1$ has homs consisting of only the components $\phi_1$ of the natural transformations.
The identity on objects functor $\Prof(J,J)_0\morph{}\Prof(J,J)_1$ forgets the $\phi_0$ components.

As with any other category of endoprofunctors $\Prof(J,J)$ has a closed monoidal structure given by composition of the profunctors.
Given $P=(P_0,P_1,\eta_P)$ and $Q=(Q_0,Q_1,\eta_Q)$, their composition is given by $QP = (Q_0P_0,Q_1P_1,\eta_{QP})$ - we compose the underlying profunctors and take $\eta_{QP}$ to be given by $$\int^c Q(-,c)\otimes P(c,-) \xRightarrow{\int \eta_Q\otimes\eta_P} \int^{c\in\cat{C}_0} Q(J-,Jc)\otimes P(Jc,J-) \xRightarrow{y_J\dashv y^J} \int^{c\in\cat{C}_1} Q(J-,c)\otimes P(c,J-).$$

\subsection{Tight Profunctors}\label{sec:conical_prof}
In Section \ref{sec:closed_premon} we saw that effectful structure on $J:\cat{C}_0\morph{}\cat{C}_1$ induced a closed effectful structure on the free tight cocompletion of $J$.
It turns out that this effectful structure on $J$ is only a sufficient and not necessary condition for closed effectful structure on the free tight cocompletion of $J$.
Analogously to the case of monoidal categories where, in order for the presheaf category $\widehat{\cat{C}}$ to be closed monoidal it is only necessary that the category $\cat{C}$ is promonoidal \cite{day,day_thesis}, we only require $J$ to be a ``pro-effectful'' category.
To define these categories we need firstly to study the class of profunctors which factor through the tight cocompletion.
This will be the aim of this section.

To define pro-effectful categories we would like to replace the functors of a effectful category with profunctors, but we have a problem: we cannot consider arbitrary $\vsq$-profunctors $P:\opcat{J_\cat{D}}\otimes J_\cat{C}\morph{}\vsq$ because these assign arbitrary presheaves $\opcat{J_\cat{D}}\morph{}\vsq$ to objects of $J_\cat{C}$. These presheaves will not in general be contained in the free tight cocompletion.
Thus, we need a restricted class of profunctors, those that we call the \textit{tight} profunctors.

\begin{defn}[Tight $\vsq$-Profunctor]
  A tight $\vsq$-profunctor $P:J_\cat{C}\pmorph J_\cat{D}$ is a $\vsq$-functor $P:J_\cat{C}\morph{} \overline{J_\cat{D}}$, where $\overline{J_\cat{D}} \cong \lan^L_{\opcat{J_\cat{D}}}$ is the free tight cocompletion of $J_\cat{D}$.
\end{defn}

\begin{remark}
  Tight $\vsq$-profunctors can be unpacked component-wise analogously to Proposition \ref{prop:v2profs}, to see that they are precisely the $\vsq$-profunctors where $\eta$ is a natural \textit{isomorphism}.
\end{remark}

Similarly to how a profunctor $P:\cat{C}\pmorph\cat{D}$ is equivalently a cocontinuous functor between free cocompletions $\widehat{P}:\widehat{\cat{C}}\morph{}\widehat{\cat{D}}$, tight $\vsq$-profunctors are \textit{tightly} cocontinuous functors between free tight cocompletions.
\begin{defn}[Tightly Cocontinuous Functor]
  A $\vsq$-functor $F:J_\cat{C}\morph{}J_\cat{D}$ between tightly cocomplete categories is tightly cocontinuous if it preserves all tight colimits.
\end{defn}

\begin{thm}[\cite{kelly}]
  Let $\overline{J_\cat{C}}$ be the closure of $J_\cat{C}$ in $[\opcat{J_\cat{C}},\vsq]$ under tight colimits and write $y^L:J_\cat{C}\morph{}\overline{J_\cat{C}}$ for the inclusion.
  Then for tightly cocomplete $J_\cat{D}$, there is an equivalence
  \begin{equation*}
    \lan_{y^L}: [J_\cat{C},J_\cat{D}] \cong \mathsf{Cocont}_{tight}(\overline{J_\cat{C}},J_\cat{D})
  \end{equation*}
  where the right-hand is the category of tightly cocontinuous functors.
  This exhibits $\overline{J_\cat{C}}$ as the free tight cocompletion of $J_\cat{C}$.
\end{thm}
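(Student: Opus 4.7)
The plan is to follow the standard proof of Kelly's enriched cocompletion theorem \cite{kelly}, adapted to tight presheaves as the class of admissible weights. Three ingredients are needed: (i) $y^L$ is fully faithful and dense with respect to tight weights; (ii) $\lan_{y^L} F$ exists and is tightly cocontinuous whenever $J_\cat{D}$ is tightly cocomplete; (iii) precomposition with $y^L$ is inverse to $\lan_{y^L}$ on the tightly cocontinuous functors.

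First I would check that $y^L:J_\cat{C}\morph{}\overline{J_\cat{C}}$ is fully faithful. This follows because $y^L$ is the corestriction of the $\vsq$-Yoneda embedding $y:J_\cat{C}\morph{}[\opcat{J_\cat{C}},\vsq]$ to the tight-colimit closure of the representables, and Yoneda is fully faithful. Tight density, namely that every $X\in\overline{J_\cat{C}}$ is canonically a tight weighted colimit of representables $y^L c$, is immediate from the construction of $\overline{J_\cat{C}}$ as the closure of $y^L(J_\cat{C})$ under tight colimits. For $F:J_\cat{C}\morph{}J_\cat{D}$ with tight-cocomplete target, the left Kan extension $\lan_{y^L} F$ can then be computed pointwise by the standard weighted-colimit formula with weights $\overline{J_\cat{C}}(y^L-,X)$; these weights are tight by definition of $\overline{J_\cat{C}}$, so the required colimits exist in $J_\cat{D}$. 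That $\lan_{y^L} F$ preserves tight colimits is automatic, since tight colimits in $\overline{J_\cat{C}}$ are created from $[\opcat{J_\cat{C}},\vsq]$ and pointwise left Kan extensions commute with all colimits their weights see.

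Next I would verify the two natural isomorphisms exhibiting the equivalence. The unit $F\cong(\lan_{y^L}F)\circ y^L$ is a consequence of full faithfulness of $y^L$: the pointwise coend formula evaluated at a representable collapses by the enriched Yoneda lemma. The counit $\lan_{y^L}(G\circ y^L)\Rightarrow G$ for tightly cocontinuous $G:\overline{J_\cat{C}}\morph{}J_\cat{D}$ is an isomorphism because tight density presents each $X\in\overline{J_\cat{C}}$ as a tight colimit of representables which $G$ preserves, so both sides compute the same tight colimit of the $G(y^L c)$ in $J_\cat{D}$. Naturality in $F$ and in $G$ is routine.

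The main obstacle, and the point at which one really relies on existing machinery rather than working from scratch, is the tight density theorem itself: verifying that the canonical colimit reconstruction of $X\in\overline{J_\cat{C}}$ genuinely converges as a tight colimit in the $\vsq$-enriched setting and is preserved under tightly cocontinuous functors. Rather than reproving this, one invokes Kelly's general framework of cocompletion under a class of weights \cite{kelly}, of which the tight weights form an appropriate (saturated) instance; the $\vsq$-specific details are treated by Power \cite{power_generic}, from which the claim then follows directly.
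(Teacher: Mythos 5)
Your outline is correct and matches the paper's treatment: the paper imports this result directly from Kelly's theory of free cocompletion under a class of weights (with the $\vsq$-specific details in Power's work), and your sketch of full faithfulness, tight density, pointwise Kan extension into a tightly cocomplete target, and the unit/counit isomorphisms is the standard proof of exactly that theorem, deferring to the same machinery at the same point. No gaps worth flagging beyond your own acknowledgement that the density and preservation arguments are inherited from Kelly's framework for (saturated) classes of weights.
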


Indeed, $y^L$ is fully faithful so that there is a natural isomorphism $F\cong (\lan_{y^L}F)y^L$.
Consequently, we can think of a tight $\vsq$-profunctor $P:J_\cat{C}\morph{}\overline{J_\cat{D}}$ as a tightly cocontinuous functor $\tilde{P}:\overline{J_\cat{C}}\morph{}\overline{J_\cat{D}}$.
We can now define the following bicategory of tight $\vsq$-profunctors.
\begin{defn}
  Denote by $\vsqProf^\mathsf{Tight}$ the bicategory that has
  \begin{itemize}
    \item 0-cells the $\vsq$-categories $J:\cat{C}_0\morph{}\cat{C}_1$,
    \item 1-cells, $P:J_\cat{C}\pmorph J_\cat{D}$, the tight $\vsq$-profunctors $P:J_\cat{C}\morph{}\overline{J_\cat{D}}$,
    \item 2-cells the $\vsq$-natural transformations.
  \end{itemize}
  Composition of 1-cells is given by taking the left Kan extension along $y^L$ and composing the functors we obtain $Q\circ P = (\lan_{y^L}Q) P$.
\end{defn}

\begin{remark}
  We could also have defined tight $\vsq$-profunctors $J_\cat{C}\morph{}\overline{J_\cat{D}}$ as usual $\vsq$-profunctors $J_\cat{C}\morph{}[\opcat{J_\cat{D}},\vsq]$ that factorise via the embedding $y^R:\overline{J_\cat{D}}\morph{}[\opcat{J_\cat{D}},\vsq]$.
  Their usual composition as profunctors coincides (up to natural isomorphism) with the composition defined previously because $y^R$ is fully faithful and thus the unit of the Kan extension along $y^R$ is an isomorphism, $F\cong (\lan_{y^R}F)y^R$.
  It follows that
  \begin{equation*}
    Q\circ P = (\lan_y Q)P = (\lan_{y^Ry^L} Q)P \cong (\lan_{y^R}\lan_{y^L} Q)P = (\lan_{y^R}\lan_{y^L} Q)y^R P' \cong (\lan_{y^L}Q)P'.
  \end{equation*}
\end{remark}

\begin{remark}
  There is a more abstract but cleaner way to define the bicategory $\vsqProf^\mathsf{Tight}$, by noting that it is the Kleisli bicategory of a certain relative pseudomonad on $\vsqCat$.
  Relative pseudomonads were introduced in \cite{fiore} where it was also demonstrated that $\Prof$ is the Kleisli bicategory of the relative pseudomonad $\widehat{(\cdot)}$ of presheaves, which freely adds colimits by acting on 0-cells as $\cat{C}\mapsto\widehat{\cat{C}}$.
  Due to size issues, $\widehat{(\cdot)}$ is a relative pseudomonad and not just a plain pseudomonad: $\widehat{(\cdot)}$ sends small categories to locally small categories and so it is only a relative pseudomonad over the inclusion $\Cat\morph{}\mathsf{CAT}$ of the 2-category of small categories into the 2-category of locally small categories.

  In the same fashion there is a relative pseudomonad $\overline{(\cdot)}$ over the inclusion $\vsqCat\morph{}\vsq\text{-}\mathsf{CAT}$ which sends a small $\vsq$-category to its free tight cocompletion.
  It is then fairly straightforward to check that $\vsqProf^\mathsf{Tight}$ is the Kleisli bicategory of this relative pseudomonad and therefore also check that it is indeed a bicategory.
\end{remark}

$\vsqProf^\mathsf{Tight}$ has an interesting tensor product given by generalising the funny tensor product.

\begin{prop}[External Tensor Product]\label{prop:external_tensor}
  Let $J_\cat{C}$ and $J_\cat{D}$ be $\vsq$-categories and write $\overline{J_\cat{C}}$ and $\overline{J_\cat{D}}$ be their free tight cocompletions.
  Then there is a $\vsq$-functor
  \begin{equation}\label{eq:external_tensor}
    \hat{\otimes}:\overline{J_\cat{C}}\funny\overline{J_\cat{D}} \morph{} \overline{J_{\cat{C}\funny\cat{D}}}
  \end{equation}
  with components that act on objects as $(F\hat{\otimes} G)(c,d) :=  Fc\otimes Gd$.
\end{prop}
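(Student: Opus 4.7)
The plan is to define $\hat{\otimes}$ on representables and extend by separate tight cocontinuity in each variable, exploiting the density property that every $F \in \overline{J_\cat{C}}$ decomposes as $F \cong \int^c Fc \cdot y^L c$ as a tight colimit of representables (and similarly for $\overline{J_\cat{D}}$). This reduces the problem to fixing values on representables and checking that the resulting formula behaves correctly with respect to the funny tensor pushout and the $\vsq$-enrichment.

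On representables I would set $y^L c \hat{\otimes} y^L d := y^L(c,d)$, where $(c,d)$ is viewed as an object of $J_{\cat{C}\funny\cat{D}}$ and $y^L$ is the Yoneda inclusion into $\overline{J_{\cat{C}\funny\cat{D}}}$. Extending by separate tight cocontinuity yields
\begin{equation*}
F \hat{\otimes} G \;:=\; \int^{c,d} (Fc \otimes Gd) \cdot y^L(c,d),
\end{equation*}
a tight colimit computed in $\overline{J_{\cat{C}\funny\cat{D}}}$. Evaluating the $0$-component at $(c',d')$ and applying enriched co-Yoneda in each variable (using $y^L(c,d)_0 \cong \cat{C}_0(-,c) \otimes \cat{D}_0(-,d)$) recovers the claimed pointwise formula $(F \hat{\otimes} G)(c',d') \cong Fc' \otimes Gd'$.

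Descent from $\overline{J_\cat{C}} \otimes \overline{J_\cat{D}}$ to the funny tensor $\overline{J_\cat{C}} \funny \overline{J_\cat{D}}$ then follows from the universal property of the pushout \eqref{eq:funny}: separate tight cocontinuity gives two functors, one from $\overline{\cat{C}_1} \otimes \overline{\cat{D}_0}$ and one from $\overline{\cat{C}_0} \otimes \overline{\cat{D}_1}$, both landing in $\overline{\cat{C}_1 \funny \cat{D}_1}$, and by construction they agree on the common restriction to $\overline{\cat{C}_0} \otimes \overline{\cat{D}_0}$.

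The step I expect to be the main obstacle is verifying that the $0$- and $1$-components assemble into a genuine $\vsq$-functor, i.e.\ that they commute with the identity-on-objects embeddings $\lan_{\opcat{J}}^L$ on both source and target. Using the bo-ff factorisation $\overline{J_{\cat{C}\funny\cat{D}}} \cong \Im(\lan_{\opcat{J_{\cat{C}\funny\cat{D}}}})$, the object-level of $\overline{\cat{C}_1 \funny \cat{D}_1}$ is canonically the presheaf category $\widehat{\cat{C}_0 \otimes \cat{D}_0}$, so both components of $\hat{\otimes}$ act on objects via the very same coend formula --- which is precisely the agreement needed for $\vsq$-functoriality.
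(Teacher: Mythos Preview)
Your strategy is valid and packaged differently from the paper's. The paper works directly at the component level: $\hat{\otimes}_0$ is the evident external tensor of presheaves, and $\hat{\otimes}_1$ is defined on the generating morphisms $(\alpha;1)$, $(1;\beta)$ of $\overline{\cat{C}_1}\funny\overline{\cat{D}_1}$ by hand, using the factorisation
\[
\lan_{\opcat{J_{\cat{C}\funny\cat{D}}}}(F\otimes G)\;\cong\;\lan_{\opcat{i_1}}(\lan_{\opcat{J_\cat{C}}}F\otimes G)\;\cong\;\lan_{\opcat{i_0}}(F\otimes\lan_{\opcat{J_\cat{D}}}G)
\]
that comes from the two ways of traversing the pushout square~\eqref{eq:funny}; one then simply applies $\lan_{\opcat{i_1}}(-\otimes G)$ to $\alpha$ and $\lan_{\opcat{i_0}}(F\otimes -)$ to $\beta$. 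Your density-and-universal-property route is more conceptual and, done carefully, need not write these isomorphisms out explicitly.

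The step that needs tightening is ``separate tight cocontinuity gives two functors, one from $\overline{\cat{C}_1}\otimes\widehat{\cat{D}_0}$\ldots''. Tight cocontinuity is a $\vsq$-level notion: the universal property of $\overline{J_\cat{C}}$ produces $\vsq$-functors out of $\overline{J_\cat{C}}$, not bare $\cosmos$-functors out of its $1$-part $\overline{\cat{C}_1}$ alone, so as stated you have not yet explained what $\hat{\otimes}_1$ does to a morphism $\alpha$ of $\overline{\cat{C}_1}$. The clean fix is to first hold the second variable at a \emph{representable}: for each object $d$ there is an evident $\vsq$-functor $J_\cat{C}\to J_{\cat{C}\funny\cat{D}}$ with $1$-part $i_0(-,d)$; postcomposing with $y^L$ and invoking the free tight cocompletion yields a tightly cocontinuous $\vsq$-functor $\overline{J_\cat{C}}\to\overline{J_{\cat{C}\funny\cat{D}}}$ whose $1$-part is your $-\,\hat\otimes\,y^Ld$. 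These are $\vsq$-natural in $d\in\cat{D}_0$ (the comparison components lie in the image of $i_0$, hence are central), and a second ordinary cocontinuous extension in $d$ over $\widehat{\cat{D}_0}$ then furnishes the leg $\overline{\cat{C}_1}\otimes\widehat{\cat{D}_0}\to\overline{\cat{C}_1\funny\cat{D}_1}$; symmetrically for the other leg. With this in place your pushout argument goes through, and the $\vsq$-compatibility you flagged as the main obstacle is automatic, since each leg restricts along $\lan^L_{\opcat{J}}\otimes 1$ to $\lan^L_{\opcat{J_{\cat{C}\funny\cat{D}}}}\circ\hat\otimes_0$ by construction.
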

\begin{proof}
  In Appendix \ref{proof:external_tensor}.
\end{proof}

\begin{defn}[Funny Tensor Product of Tight $\vsq$-Profunctors]
  On categories the funny tensor acts like in $\vsqCat$.
  On tight $\vsq$-profunctors $P:J_\cat{A}\morph{}\overline{J_\cat{B}}$ and $Q:J_\cat{C}\morph{}\overline{J_\cat{D}}$ we define their funny tensor to be given by their funny tensor in $\vsqCat$ composed with the external tensor of free tight cocompletions \eqref{eq:external_tensor}:
  \begin{equation*}
    \begin{tikzcd}
      J_\cat{A}\funny J_\cat{C} \ar[r,"P\funny Q"] & \overline{J_\cat{B}}\funny\overline{J_\cat{D}} \ar[r,"\hat{\otimes}"] & \overline{J_{\cat{B}\funny\cat{D}}}
    \end{tikzcd}
  \end{equation*}
\end{defn}

\begin{thm}
  $\vsqProf^\mathsf{Tight}$ is a monoidal bicategory under the funny tensor product.
\end{thm}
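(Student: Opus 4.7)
The plan is to transport the monoidal 2-category structure on $\vsqCat$ (Theorem \ref{thm:v2cat}) through the relative pseudomonad $\overline{(\cdot)}:\vsqCat\morph{}\vsq\text{-}\mathsf{CAT}$ whose Kleisli bicategory is $\vsqProf^\mathsf{Tight}$. The central observation is that the external tensor $\hat\otimes:\overline{J_\cat{C}}\funny\overline{J_\cat{D}}\morph{}\overline{J_{\cat{C}\funny\cat{D}}}$ of Proposition \ref{prop:external_tensor} is exactly the piece of data one needs to upgrade $\overline{(\cdot)}$ to a \emph{monoidal} relative pseudomonad. Once this is verified, the fact that Kleisli bicategories of monoidal (relative) pseudomonads inherit a monoidal bicategory structure from the base 2-category gives the result; this is the relative analogue of the classical fact that $\mathsf{Prof}$ is monoidal because $\widehat{(\cdot)}$ (presheaves) and the Day-style external tensor make it a monoidal relative pseudomonad over $\Cat$.

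Concretely, I would first record that on $0$-cells $\funny$ is just the funny tensor of $\vsq$-categories established in the previous section, and that on $2$-cells it is defined componentwise. On $1$-cells the definition is already fixed: $P\funny Q := \hat\otimes \circ (P\funny Q)$. To construct the associator and unitors, I would combine the associator and unitors of $(\vsqCat,\funny)$ with coherence isomorphisms for $\hat\otimes$. For the associator, the two ways of tensoring three tight profunctors both factor through compositions of $\hat\otimes$ with itself, so the required natural isomorphism $(P\funny Q)\funny R\cong P\funny (Q\funny R)$ is obtained by pasting the $\vsqCat$-associator with an isomorphism $\hat\otimes\circ(\hat\otimes\funny 1)\cong \hat\otimes\circ(1\funny\hat\otimes)$ that exists because both sides agree on objects via $(F\hat\otimes G)\hat\otimes H = F\otimes G\otimes H$ and both are tightly cocontinuous extensions of the same $\vsq$-functor on representables, hence coincide up to a unique natural iso by the universal property of tight cocompletion. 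The unitors are obtained similarly using that $\hat\otimes$ restricted to the unit $\vsq$-category acts as the identity up to canonical isomorphism.

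For the coherence axioms, the pentagon and triangle reduce to two checks: that the corresponding coherences hold in $(\vsqCat,\funny)$ (already given by Theorem \ref{thm:v2cat}), and that the multiple applications of $\hat\otimes$ needed to compare parenthesisations agree coherently. The latter follows because all the functors involved are tightly cocontinuous and agree on the representables $y^L c\hat\otimes y^L d = y^L(c,d)$, so uniqueness of tightly cocontinuous extensions makes all the competing pasting diagrams equal. Whiskering compatibility of the associator and unitors with $1$- and $2$-cells follows from the pseudo-naturality of $\hat\otimes$ in both arguments (itself a consequence of the universal property applied pointwise).

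The main obstacle is to make rigorous the interaction between the funny tensor and composition of tight profunctors, i.e.\ the interchanger $(Q'\funny Q)\circ(P'\funny P)\cong (Q'\circ P')\funny(Q\circ P)$. Since composition is given by tight left Kan extension along $y^L$, this interchanger amounts to showing that $\lan_{y^L}$ commutes with $\hat\otimes$, which again is a universal-property argument: both sides are tightly cocontinuous functors from $\overline{J_\cat{A}}\funny\overline{J_\cat{C}}$ that agree on the image of $y^L\funny y^L$. This is essentially the same reasoning that powers Day convolution in the usual monoidal-profunctor setting, and once it is in hand the remaining coherences are routine.
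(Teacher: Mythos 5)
Your proposal follows essentially the same route as the paper: identify $\vsqProf^\mathsf{Tight}$ as the Kleisli bicategory of the relative pseudomonad $\overline{(\cdot)}$, use the external tensor of Proposition \ref{prop:external_tensor} to make this pseudomonad monoidal over $(\vsqCat,\funny)$, and conclude that the Kleisli bicategory inherits the monoidal structure. The paper only gives this as a two-sentence sketch, so your additional detail on the coherences and the interchanger is a welcome elaboration rather than a divergence.
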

\begin{proof}[Proof sketch]
  $\vsqProf^\mathsf{Tight}$ is the Kleisli bicategory of the relative pseudomonad $\overline{(\cdot)}$ that adds tight colimits.
  Under the funny tensor product on $\vsqCat$, this pseudomonad is monoidal, and therefore its Kleisli bicategory is also monoidal.
\end{proof}

\section{Pro-effectful Categories}\label{sec:pre_pro}
Finally in this Section we are in a position to define pro-effectful categories: as pseudomonoids in $\vsqProf^\mathsf{Tight}_{\funny}$, placing them on equal footing algebraically with monoidal, promonoidal and effectful categories.

\begin{defn}
  A pro-effectful category is a pseudomonoid in $\vsqProf^{\mathsf{Tight}}_{\funny}$.
  Explicitly, a pro-effectful category $J_\cat{C}$ is a $\vsq$-category equipped with a tensor product tight $\vsq$-profunctor $P:J_{\cat{C}\funny\cat{C}}\pmorph J_\cat{C}$ and a unit tight $\vsq$-profunctor $I:1\pmorph J_\cat{C}$, together with $\vsq$-natural isomorphisms $P(P\funny 1)\overset{\alpha}{\cong}P(1\funny P)$ and $P(I\funny 1)\overset{\lambda}{\cong}1\overset{\rho}{\cong}P(1\funny I)$ such that the triangle and pentagon equations hold.
\end{defn}

Like their effectful counterparts, pro-effectful categories also have an ``actegorical definition'' - they are a particular instance of a category equipped with an action by a promonoidal category.
This requires us firstly to weaken actegories to proactegories.

\begin{defn}[Proactegory]
  A left proactegory is a promonoidal category $(\cat{C}_0,P,I)$ and a category $\cat{C}_1$ equipped with a left proaction by $\cat{C}_0$, that is, a profunctor $L:\cat{C}_0\otimes\cat{C}_1\pmorph\cat{C}_1$ and natural isomorphisms
  \begin{equation*}
    \int^{X\in\cat{C}_1} L(A,B,X)\otimes L(X,C,D) \overset{a}{\cong} \int^{X\in\cat{C}_0} L(A,X,D)\otimes P(X,B,C), \quad
    \int^{X\in\cat{C}_0} L(A,X,B)\otimes I(X) \overset{l}{\cong} \cat{C}_1(A,B),
  \end{equation*}
  satisfying similar coherence diagrams as for an actegory.
  A biproactegory is simultaneously a left and right proactegory with an additional natural isomorphism
  \begin{equation*}
    \int^X R(D,X,C)\otimes L(X,A,B) \overset{b}{\cong} \int^X L(D,A,X)\otimes R(X,B,C)
  \end{equation*}
  satisfying similar coherences as for a biactegory.
\end{defn}

The following result generalises the equivalence between effectful categories and certain actegories \cite{levy_push} to the pro-effectful case.

\begin{prop}\label{prop:proeff_is_proact}
  A pro-effectful category is equivalently the following data:
  \begin{itemize}
    \item a promonoidal category $(\cat{C}_0,P_0,I_0)$,
    \item a category $\cat{C}_1$ with the same objects as $\cat{C}_0$ and an identity on objects functor $J:\cat{C}_0\morph{}\cat{C}_1$,
    \item left and right $\cat{C}_0$-proactions on $\cat{C}_1$, $P_1^L:\cat{C}_0\otimes\cat{C}_1\pmorph\cat{C}_1$ and $P_1^R:\cat{C}_1\otimes\cat{C}_0\pmorph\cat{C}_1$, which extend the canonical proactions of $\cat{C}_0$ on itself:
    \begin{equation}\label{eq:extend1}
      \begin{tikzcd}[cramped]
        \cat{C}_0\otimes\cat{C}_0 \ar[r,"P_0","\shortmid" marking] \ar[d,"1\otimes yJ"',"\shortmid" marking]  & \cat{C}_0 \ar[d,"yJ","\shortmid" marking] \\
        \cat{C}_0\otimes\cat{C}_1 \ar[r,"P_1^L"',"\shortmid" marking] & \cat{C}_1
      \end{tikzcd}
      \hspace{2cm}
      \begin{tikzcd}[cramped]
        \cat{C}_0\otimes\cat{C}_0 \ar[r,"P_0","\shortmid" marking] \ar[d,"yJ \otimes 1"',"\shortmid" marking]  & \cat{C}_0 \ar[d,"yJ","\shortmid" marking] \\
        \cat{C}_1\otimes\cat{C}_0 \ar[r,"P_1^R"',"\shortmid" marking] & \cat{C}_1
      \end{tikzcd}
    \end{equation}
    \item a natural isomorphism $P_1^R(P_1^L\otimes 1) \cong P_1^L(1\otimes P_1^R)$ making $\cat{C}_1$ into a $\cat{C}_0$-$\cat{C}_0$-biproactegory.
  \end{itemize}
\end{prop}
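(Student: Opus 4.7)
The plan is to start from the pseudomonoid data $(J_\cat{C},P,I,\alpha,\lambda,\rho)$ in $\vsqProf^{\mathsf{Tight}}_{\funny}$ and mechanically decompose each piece using the two structural facts already established: the component decomposition of tight $\vsq$-profunctors (Proposition \ref{prop:v2profs} together with the tightness remark) and the pushout presentation \eqref{eq:funny} of the funny tensor. The reverse direction then consists of reassembling the same pieces, so the heart of the argument is a careful unpacking.

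First I would unpack the tensor profunctor $P:J_{\cat{C}\funny\cat{C}}\pmorph J_\cat{C}$. By Proposition \ref{prop:v2profs} and tightness, $P$ decomposes as $(P_0,P_1,\eta)$ with $\eta$ a natural isomorphism. Here $P_0:\cat{C}_0\otimes\cat{C}_0\pmorph\cat{C}_0$ is immediately a profunctor of the required type, while $P_1:(\cat{C}_1\funny\cat{C}_1)\pmorph\cat{C}_1$ must be further analysed. Precomposing $P_1$ with the two pushout inclusions $i_0:\cat{C}_1\otimes\cat{C}_0\morph{}\cat{C}_1\funny\cat{C}_1$ and $i_1:\cat{C}_0\otimes\cat{C}_1\morph{}\cat{C}_1\funny\cat{C}_1$ from \eqref{eq:funny} produces profunctors $P_1^R:\cat{C}_1\otimes\cat{C}_0\pmorph\cat{C}_1$ and $P_1^L:\cat{C}_0\otimes\cat{C}_1\pmorph\cat{C}_1$. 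The universal property of the pushout says that giving $P_1$ is the \emph{same} as giving this pair subject to their common restriction along $J\otimes J$, and the isomorphism $\eta$ identifies that common restriction with $yJ\circ P_0$. This is precisely the content of the two squares \eqref{eq:extend1}. Similarly, the unit $I:1\pmorph J_\cat{C}$ decomposes into $I_0$ and $I_1$ related by an isomorphism of the form $I_1\cong yJ\circ I_0$, so only $I_0$ carries data.

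Next I would translate the pseudomonoid coherences. The associator $\alpha:P(P\funny 1)\cong P(1\funny P)$ is a $\vsq$-natural isomorphism, which by Proposition \ref{prop:v2profs} is a pair $(\alpha_0,\alpha_1)$ compatible with the $\eta$'s. The component $\alpha_0$, together with $(P_0,I_0)$ and the unpacked unitors $\lambda_0,\rho_0$, furnishes exactly the promonoidal structure on $\cat{C}_0$ (the triangle and pentagon axioms transport directly because the pseudomonoid axioms are enforced componentwise in $\vsq$). For $\alpha_1$, the domain $(\cat{C}_1\funny\cat{C}_1)\funny\cat{C}_1$ is a triple pushout whose three canonical legs are $\cat{C}_0\otimes\cat{C}_0\otimes\cat{C}_1$, $\cat{C}_0\otimes\cat{C}_1\otimes\cat{C}_0$ and $\cat{C}_1\otimes\cat{C}_0\otimes\cat{C}_0$; restricting $\alpha_1$ along each leg yields respectively the left-associator for $P_1^L$, the bimodule coherence $P_1^R(P_1^L\otimes 1)\cong P_1^L(1\otimes P_1^R)$, and the right-associator for $P_1^R$. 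The unitors $\lambda,\rho$ restrict in the same way to produce the left and right unit isomorphisms for the proactions, and compatibility with $\eta$ forces all of these to extend those of the promonoidal structure on $\cat{C}_0$.

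For the converse I would bundle $(P_0,P_1^L,P_1^R)$ back into a tight $\vsq$-profunctor $P$ using the pushout presentation (with the extension squares \eqref{eq:extend1} supplying the coherent $\eta$), bundle the associators and bimodule isomorphism into $\alpha_1$ by the universal property of the triple pushout, and check that the pentagon and triangle axioms of a pseudomonoid reduce to the promonoidal pentagon/triangle for $\cat{C}_0$ together with the biproactegory coherences. The main obstacle is this last reassembly step: one must verify that the three associator-like isomorphisms on the pushout legs glue into a \emph{single} $\vsq$-natural isomorphism and that the composite pentagon in $\vsqProf^{\mathsf{Tight}}_{\funny}$ restricts, on each of the four legs of the quadruple pushout $\cat{C}_1^{\funny 4}$, to one of the known coherences (three proactegorical pentagons plus the promonoidal pentagon). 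The compatibility with $\eta$ means this verification is purely diagrammatic bookkeeping, but it is where care is required.
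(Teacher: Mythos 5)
Your proposal is correct and follows essentially the same route as the paper's proof: decompose the tight $\vsq$-profunctors via Proposition \ref{prop:v2profs} and tightness, extract $P_1^L$ and $P_1^R$ by precomposing $P_1$ with the pushout inclusions $i_1$ and $i_0$, read off the extension squares \eqref{eq:extend1} from the identity $P_1 i_1(1\otimes J)=P_1 J_{\cat{C}\funny\cat{C}}$, and reassemble in the converse direction by the universal property of the pushout. Your elaboration of the coherence transport via the triple pushout is a more explicit account of what the paper dismisses as a routine calculation, but it is the same argument.
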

\begin{proof}
  In Appendix \ref{proof:proeff_is_proact}.
\end{proof}

The next proposition generalises the equivalence between effectful categories and strong promonads \cite{jacobs_arrows,garner,roman_promonads} to the pro-effectful case.
The proof methods are related to those for promonoidal monads in \cite{day_monoidal_monads}.
\begin{prop}\label{prop:prostrong_promonad}
  A pro-effectful category is equivalently a prostrong promonad.
\end{prop}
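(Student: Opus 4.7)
The plan is to mirror the known equivalence between effectful categories and strong promonads, using Proposition \ref{prop:proeff_is_proact} as a stepping stone: the biproactegorical reformulation provides a convenient intermediate form from which both a promonad on $\cat{C}_0$ and its prostrengths can be extracted.

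First, starting from a pro-effectful category, I would apply Proposition \ref{prop:proeff_is_proact} to repackage it as a promonoidal category $(\cat{C}_0, P_0, I_0)$, an identity-on-objects functor $J:\cat{C}_0 \morph{} \cat{C}_1$, and left and right $\cat{C}_0$-proactions $P_1^L, P_1^R$ on $\cat{C}_1$ extending the canonical proactions of $\cat{C}_0$ on itself, together with an interchange isomorphism. From this data I would construct a promonad $T:\cat{C}_0 \pmorph \cat{C}_0$ by $T(a,b) := \cat{C}_1(Ja, Jb)$, with unit $\eta: \cat{C}_0(-,-) \Rightarrow T$ given by the action of $J$ on hom-objects and multiplication $\mu: T \circ T \Rightarrow T$ induced by composition in $\cat{C}_1$, exactly as in the effectful case. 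The prostrengths for $T$ would then be extracted from the proactions: unpacking the extension condition \eqref{eq:extend1} via coend calculus and the Yoneda lemma yields $P_1^L(d; a, Jb) \cong \int^x \cat{C}_1(d, Jx) \otimes P_0(x; a, b)$, and restricting to $d$ in the image of $J$ exhibits the right-hand side as the composite $T \circ P_0$, giving a natural transformation of the shape of a left prostrength; the right prostrength is obtained dually from $P_1^R$, and the biproactegorical interchange supplies the compatibility between them.

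In the converse direction, from a prostrong promonad $(T,\mu,\eta,\sigma^L,\sigma^R)$ on a promonoidal $\cat{C}_0$ I would form the Kleisli $\vsq$-category, taking $\cat{C}_1$ with the same objects as $\cat{C}_0$, homs $\cat{C}_1(a,b) = T(a,b)$, composition induced by $\mu$, and $J$ given by $\eta$. The prostrengths, together with the promonoidal tensor $P_0$, would induce the proactions $P_1^L$ and $P_1^R$ on $\cat{C}_1$, and the axioms of a prostrong promonad would translate into the biproactegorical coherences required to invoke Proposition \ref{prop:proeff_is_proact} again to recover a pro-effectful category. Checking that the two constructions are mutually inverse up to equivalence is then a matter of unwinding definitions.

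The main obstacle will be the term-for-term matching of coherence data: the associativity, unit, and interchange isomorphisms of the biproactegory must correspond exactly to the compatibilities of the prostrengths with the promonad multiplication and unit, with the promonoidal associator and unitors, and with each other. This is a coend-calculus exercise modelled on the treatment of promonoidal monads in \cite{day_monoidal_monads}, with the adjunction $y_J \dashv y^J$ in $\VProf$ used in Proposition \ref{prop:v2coend} providing the bridge between proactions on $\cat{C}_1$ and prostrengths on $\cat{C}_0$.
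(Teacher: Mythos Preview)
Your proposal is correct and matches the paper's own proof essentially step for step: both directions pass through Proposition \ref{prop:proeff_is_proact}, the promonad is taken to be $T(-,\bl)=\cat{C}_1(J-,J\bl)$ with unit and multiplication from $J$ and composition in $\cat{C}_1$, and conversely the Kleisli category of a prostrong promonad supplies $\cat{C}_1$ with proactions built from the prostrengths via the formula $P_1^L(-,c,Fc')\cong\int^x \kl{T}(-,Fx)\otimes P_0(x,c,c')$. Your remarks on the coherence-matching obstacle and the reference to \cite{day_monoidal_monads} also align with the paper.
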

\begin{proof}
  Take a prostrong promonad $T:\cat{C}\pmorph\cat{C}$.
  We will show we have the data of Proposition \ref{prop:proeff_is_proact}.

  $T$ has a Kleisli category in $\VProf$ and there is an identity on objects free functor $F:\cat{C}\morph{}\kl{T}$.
  By assumption $\cat{C}$ has a promonoidal structure $(P_0,I_0)$ and we can use the left and right prostrengths to define left and right proactions of $\cat{C}$ on $\kl{T}$.
  On objects the left proaction acts as $P_1^L(-,c,Fc') := \int^x \kl{T}(-,Fx)\otimes P_0(x,c,c')$ extending the canonical proaction on the centre, so that $\eqref{eq:extend1}$ commutes.
  Its action on homs is induced by the strength $\int^c P_0(-,-,c)\otimes T(c,-)\Rightarrow \int^c T(-,c)\otimes P_0(c,-,-)$.

  Conversely, suppose we are given a pro-effectful category $J:\cat{C}_0\morph{}\cat{C}_1$.
  Then $T(-,\bl):=\cat{C}_1(J-,J\bl)$ a promonad on $\cat{C}_0$ where the promonad multiplication and units are given by composition in $\cat{C}_1$.
  Moreover, $\cat{C}_1$ is precisely the Kleisli category of $T$.
  Now, since $J$ is pro-effectful, $\cat{C}_0$ is promonoidal and we are left to show that $T$ is prostrong over this structure.
  By Proposition \ref{prop:proeff_is_proact}, we have left and right proactions of $\cat{C}_1$ on $\cat{C}_0$ which preserve the canonical proaction on the centre and from these one can construct the prostrength of $T$. \qedhere

\end{proof}

Pro-effectful categories are also exactly what is required to place a closed effectful structure on the free tight cocompletion of a $\vsq$-category.
This generalises Day's theorem \cite{day,day_thesis} from monoidal to effectful categories, thus also generalising the result of Power on closed effectful embeddings of effectful categories \cite{power_structure,power_generic}.
The result follows by generalising the methods of Day's original proof, and from the folklore results regarding Day convolution for actegories, see \cite{janelidze_actions,campbell_skew}.
\begin{thm}\label{thm:proeff_closed}
  There is an equivalence between pro-effectful structures on $J$ and closed effectful structures on the free tight cocompletion $\overline{J} = \lan_{\opcat{J}}^L$.
\end{thm}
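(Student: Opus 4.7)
The plan is to follow Day's classical strategy adapted to the tight $\vsq$-enriched setting, using Proposition \ref{prop:external_tensor} and the universal property of the free tight cocompletion $\overline{(\cdot)}$ to transport data between $J$ and $\overline{J}$.

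First I would produce a closed effectful structure on $\overline{J}$ from a pro-effectful structure on $J$. The tensor tight $\vsq$-profunctor $P : J \funny J \pmorph J$ is by definition a $\vsq$-functor $P : J \funny J \morph{} \overline{J}$, which extends along $y^L_{J\funny J}$ to a tightly cocontinuous $\vsq$-functor $\widetilde{P} : \overline{J \funny J} \morph{} \overline{J}$ by the universal property of the free tight cocompletion. Composing with the external tensor of Proposition \ref{prop:external_tensor} one defines $\otimes := \widetilde{P} \circ \hat{\otimes} : \overline{J} \funny \overline{J} \morph{} \overline{J}$, and the unit tight profunctor $I : 1 \pmorph J$ gives a distinguished object of $\overline{J}$. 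The pseudomonoid coherence isomorphisms $\alpha, \lambda, \rho$ transport to $\vsq$-natural isomorphisms on $\overline{J}$ via the universal property, and unpacking them along Proposition \ref{prop:v2profs} yields a genuine monoidal structure on $\overline{\cat{C}_0}$ together with a premonoidal structure on $\overline{\cat{C}_1}$ so that $\lan_{\opcat{J}}^L$ is a strict identity-on-objects premonoidal functor, i.e.\ an effectful structure on $\overline{J}$.

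Closedness would then follow by showing that for each $X \in \overline{\cat{C}_1}$ the composite $\lan_{\opcat{J}}^L(-) \otimes X : \overline{\cat{C}_0} \morph{} \overline{\cat{C}_1}$ is tightly cocontinuous: $\lan_{\opcat{J}}^L$ is itself a left adjoint (with right adjoint given by restriction along $\opcat{J}$), while $(-) \otimes X$ is tightly cocontinuous because both $\hat{\otimes}(-,X)$ and $\widetilde{P}$ preserve tight colimits, the former because colimits in free tight cocompletions are computed pointwise. A tightly cocontinuous $\vsq$-functor between free tight cocompletions of small $\vsq$-categories is a left adjoint by a tight analogue of the special adjoint functor theorem, giving the desired right adjoint $[X,-] : \overline{\cat{C}_1} \morph{} \overline{\cat{C}_0}$, which may alternatively be written using the standard end formula for internal homs. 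The left-closed case is symmetric.

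Conversely, given a closed effectful structure on $\overline{J}$, restricting its tensor along $y^L \funny y^L$ recovers a $\vsq$-functor $J \funny J \morph{} \overline{J}$, i.e.\ a tight $\vsq$-profunctor; the unit and coherence isomorphisms restrict similarly to yield pseudomonoid data in $\vsqProf^{\mathsf{Tight}}_{\funny}$. The two constructions are mutually inverse up to natural isomorphism: $y^L$ is fully faithful so restricting then extending returns the original tight profunctor, while extending then restricting returns the original tensor by uniqueness of tightly cocontinuous extensions from a dense subcategory. The hard part will be verifying closedness with care: one must check that the internal hom truly lands in $\overline{\cat{C}_0}$ and not merely in $\overline{\cat{C}_1}$, which corresponds to the fact that closure is required only along the ``central'' argument of the tensor, and this requires tracking the $\vsq$-enrichment throughout so that the two components of $P$ identified by Proposition \ref{prop:v2profs} play compatible roles.
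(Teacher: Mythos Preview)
Your proposal is correct and follows Day's strategy, but it is organised differently from the paper's proof. The paper works throughout with the proactegory characterisation of pro-effectful categories (Proposition \ref{prop:proeff_is_proact}): in the forward direction it writes down Day convolution on $\widehat{\cat{C}_0}$ explicitly, then describes the premonoidal structure on $\overline{\cat{C}_1}$ componentwise using the proactions $P_1^L,P_1^R$, and exhibits the internal hom by a concrete end formula. In the converse direction it pulls the proactions back along the Yoneda embeddings, $P_1^L(-,a,b):= y_0^L(a)\boxtimes y_1^L(b)$, and checks directly that these extend the canonical proaction on $\cat{C}_0$. Your route instead stays at the level of the pseudomonoid definition: you transport $P$ through $\hat{\otimes}$ and the universal property of $\overline{(\cdot)}$, and obtain closedness by arguing tight cocontinuity plus an adjoint-functor-theorem step rather than by writing the end down.

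Both arguments are sound; what each buys is different. The paper's explicit formulas are what get reused later in the optics theorems (Theorems \ref{thm:optJ_promon} and \ref{thm:opt_is_proeff}), so its proof doubles as a calculation. Your argument is cleaner conceptually and makes the analogy with the monoidal case more transparent, but you should be aware of two points where the paper's concreteness does real work. First, your appeal to a ``tight analogue of the special adjoint functor theorem'' is fine but must be justified: it follows from the equivalence $[J_\cat{C},J_\cat{D}]\cong\mathsf{Cocont}_{tight}(\overline{J_\cat{C}},J_\cat{D})$ together with the fact that $\overline{\cat{C}_1}\cong\kl{\widehat{T}}$ so the relevant hom is representable. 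Second, in your converse direction the claim that ``extending then restricting returns the original tensor'' uses closedness essentially --- an arbitrary effectful structure on $\overline{J}$ need not be tightly cocontinuous in the central variable, and it is precisely the adjunction $\lan_{\opcat{J}}^L(-)\otimes X \dashv [X,-]$ that forces this. You flag this as the ``hard part'' at the end, but it belongs earlier, as the reason the two constructions are inverse.
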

\begin{proof}
  In Appendix \ref{proof:proeff_closed}.
\end{proof}

Finally, we note some connections between pro-effectful categories and the premulticategories of Staton and Levy \cite{staton_premulticategories}, which generalise multicategories by dropping the interchange law.
Just as how promonoidal categories are examples of (co)multicategories \cite{day_centres}, pro-effectful categories are examples of (co)premulticategories.
Given a pro-effectful category $J:\cat{C}_0\morph{}\cat{C}_1$, there is a co-premulticategory $\mathfrak{C}$ with objects given by those of $\cat{C}_1$.
For $a,b\in \mathfrak{C}$ the class of arrows is given by $\mathfrak{C}(a;b):=\cat{C}_1(a,b)$ and for $a,b,c\in \mathfrak{C}$ the class of arrows is given by $\mathfrak{C}(a;b,c):=P_1(a,b,c)$.
The rest of the classes of arrows are defined inductively.

It is worth noting that there exist examples of pro-effectful categories which provide non-degenerate examples of premulticategories where the interchange law does not hold (in contrast to promonoidal and monoidal categories which are multicategories) and where the ``tensor'' is not representable (in contrast to monoidal and premonoidal categories).
For instance, the premonoidal optics introduced in the next section are an example of such a category.

\section{Premonoidal Optics}
In a seminal work on optics, Riley \cite{riley_optics} introduced the notion of ``effectful optics'': optics over the Kleisli category of a strong monad. These optics allow the emergence of side-effects, and extend the optics of pure functional programming to other programming languages with effects; with a similar purpose, Abou-Saleh et al \cite{abouSalehCGMS16} have introduced ``monadic lenses''. More recently, much applied category theory has been written about optics that create effects in different categories \cite{bolthedges,braithwaitehedges,clarke_profunctor,spivak2019generalized}.

We introduce a novel definition of optic over an effectful category that justifies this previous terminology: optics over the Kleisli category of a strong monad are particular cases of our effectful optics. We also introduce a proeffectful algebra over them that had been previously neglected.
In this section we will present the category of optics over a premonoidal category and outline its two tensor-like structures, analogous to those in Figure \ref{fig:optics_tensors}.

\begin{xlrbox}{premon_optics_hom1}
  \begin{tikzpicture}[baseline={([yshift=-.8ex]current bounding box.center)}]
    \node[3leggedpants,bluetube,wide] (pants) {};
    \node[tube,long,bluetube,anchor=top] (tube) at (pants.leftleg) {};
    \node[tube,long,bluetube,anchor=top] (tube2) at (pants.rightleg) {};
    \node[3leggedcopants,wide,bluetube,anchor=leftleg] (copants) at (tube.bot) {};
    \node[label] (g) at ([yshift=0.2cm]pants.center) {$g$};
    \node[label] (f) at ([yshift=-0.2cm]copants.center) {$f$};
    \begin{pgfonlayer}{edgelayer}
      \draw ([xshift=-0.075cm,yshift=0.15cm]f.center) to [out=90,in=-90] (copants.leftleg) to (pants.leftleg) to [out=90,in=-90] ([xshift=-0.075cm,yshift=-0.15cm]g.center);
      \draw ([xshift=0.075cm,yshift=0.15cm]f.center) to [out=90,in=-90] (copants.rightleg) to (pants.rightleg) to [out=90,in=-90] ([xshift=0.075cm,yshift=-0.15cm]g.center);
      \draw (copants.midleg) to (f) to (copants.belt);
      \draw (pants.midleg) to (g) to (pants.belt);
    \end{pgfonlayer}
    \node[system] at ([yshift=-0.15cm]copants.belt) {$a$};
    \node[system] at ([yshift=0.15cm]copants.midleg) {$b$};
    \node[system] at ([yshift=-0.15cm]pants.midleg) {$b'$};
    \node[system] at ([yshift=0.2cm]pants.belt) {$a'$};
  \end{tikzpicture}
\end{xlrbox}

\begin{xlrbox}{premon_optics_hom2}
  \begin{tikzpicture}[baseline={([yshift=-.8ex]current bounding box.center)}]
    \node[3leggedpants,wide] (pants) {};
    \node[tube,long,bluetube,anchor=top] (tube) at (pants.leftleg) {};
    \node[tube,long,bluetube,anchor=top] (tube2) at (pants.rightleg) {};
    \node[3leggedcopants,wide,anchor=leftleg] (copants) at (tube.bot) {};
    \node[label] (g) at ([yshift=0.2cm]pants.center) {$g$};
    \node[label] (f) at ([yshift=-0.2cm]copants.center) {$f$};
    \begin{pgfonlayer}{edgelayer}
      \draw ([xshift=-0.075cm,yshift=0.15cm]f.center) to [out=90,in=-90] (copants.leftleg) to (pants.leftleg) to [out=90,in=-90] ([xshift=-0.075cm,yshift=-0.15cm]g.center);
      \draw ([xshift=0.075cm,yshift=0.15cm]f.center) to [out=90,in=-90] (copants.rightleg) to (pants.rightleg) to [out=90,in=-90] ([xshift=0.075cm,yshift=-0.15cm]g.center);
      \draw (copants.midleg) to (f) to (copants.belt);
      \draw (pants.midleg) to (g) to (pants.belt);
    \end{pgfonlayer}
    \node[system] at ([yshift=-0.15cm]copants.belt) {$a$};
    \node[system] at ([yshift=0.15cm]copants.midleg) {$b$};
    \node[system] at ([yshift=-0.15cm]pants.midleg) {$b'$};
    \node[system] at ([yshift=0.2cm]pants.belt) {$a'$};
  \end{tikzpicture}
\end{xlrbox}

\begin{xlrbox}{vertical_tensor_central}
  \begin{tikzpicture}[baseline={([yshift=-.8ex]current bounding box.center)}]
    \node[3leggedpants,wide,bluetube] (pants) {};
    \node[tube,long,bluetube,anchor=top] (tube) at (pants.leftleg) {};
    \node[tube,long,bluetube,anchor=top] (tube2) at (pants.rightleg) {};
    \node[3leggedcopants,shortcrotch,wide,bluetube,anchor=leftleg] (copants) at (tube.bot) {};
    \node[3leggedpants,shortcrotch,wide,bluetube,anchor=belt] (pants2) at (copants.belt){};
    \node[tube,long,bluetube,anchor=top] (tube3) at (pants2.leftleg) {};
    \node[tube,long,bluetube,anchor=top] (tube4) at (pants2.rightleg) {};
    \node[3leggedcopants,wide,bluetube,anchor=leftleg] (copants2) at (tube3.bot) {};
    \node[label] (h) at ([yshift=0.2cm]pants.center) {$h$};
    \node[label] (g) at (copants.belt) {$g$};
    \node[label] (f) at ([yshift=-0.2cm]copants2.center) {$f$};
    \begin{pgfonlayer}{edgelayer}
      \draw ([xshift=-0.075cm,yshift=0.15cm]g.center) to [out=90,in=-90] (copants.leftleg) to (pants.leftleg) to [out=90,in=-90] ([xshift=-0.075cm,yshift=-0.15cm]h.center);
      \draw ([xshift=0.075cm,yshift=0.15cm]g.center) to [out=90,in=-90] (copants.rightleg) to (pants.rightleg) to [out=90,in=-90] ([xshift=0.075cm,yshift=-0.15cm]h.center);
      \draw (copants.midleg) to (g) to (pants2.midleg);
      \draw (pants.midleg) to (h) to (pants.belt);
      \draw ([xshift=-0.075cm,yshift=0.15cm]f.center) to [out=90,in=-90] (copants2.leftleg) to (pants2.leftleg) to [out=90,in=-90] ([xshift=-0.075cm,yshift=-0.15cm]g.center);
      \draw ([xshift=0.075cm,yshift=0.15cm]f.center) to [out=90,in=-90] (copants2.rightleg) to (pants2.rightleg) to [out=90,in=-90] ([xshift=0.075cm,yshift=-0.15cm]g.center);
      \draw (copants2.midleg) to (f) to (copants2.belt);
    \end{pgfonlayer}
    \node[system] at ([yshift=-0.15cm]pants2.midleg) {$a'$};
    \node[system] at ([yshift=0.15cm]copants2.midleg) {$a$};
    \node[system] at ([yshift=0.15cm]copants.midleg) {$b$};
    \node[system] at ([yshift=-0.15cm]pants.midleg) {$b'$};
    \node[system] at ([yshift=0.2cm]pants.belt) {$c'$};
    \node[system] at ([yshift=-0.2cm]copants2.belt) {$c$};
  \end{tikzpicture}
\end{xlrbox}

\begin{xlrbox}{vertical_tensor_premon}
  \begin{tikzpicture}[baseline={([yshift=-.8ex]current bounding box.center)}]
    \node[3leggedpants,wide] (pants) {};
    \node[tube,long,bluetube,anchor=top] (tube) at (pants.leftleg) {};
    \node[tube,long,bluetube,anchor=top] (tube2) at (pants.rightleg) {};
    \node[3leggedcopants,shortcrotch,wide,anchor=leftleg] (copants) at (tube.bot) {};
    \node[3leggedpants,shortcrotch,wide,anchor=belt] (pants2) at (copants.belt){};
    \node[tube,long,bluetube,anchor=top] (tube3) at (pants2.leftleg) {};
    \node[tube,long,bluetube,anchor=top] (tube4) at (pants2.rightleg) {};
    \node[3leggedcopants,wide,anchor=leftleg] (copants2) at (tube3.bot) {};
    \node[label] (h) at ([yshift=0.2cm]pants.center) {$h$};
    \node[label] (g) at (copants.belt) {$g$};
    \node[label] (f) at ([yshift=-0.2cm]copants2.center) {$f$};
    \begin{pgfonlayer}{edgelayer}
      \draw ([xshift=-0.075cm,yshift=0.15cm]g.center) to [out=90,in=-90] (copants.leftleg) to (pants.leftleg) to [out=90,in=-90] ([xshift=-0.075cm,yshift=-0.15cm]h.center);
      \draw ([xshift=0.075cm,yshift=0.15cm]g.center) to [out=90,in=-90] (copants.rightleg) to (pants.rightleg) to [out=90,in=-90] ([xshift=0.075cm,yshift=-0.15cm]h.center);
      \draw (copants.midleg) to (g) to (pants2.midleg);
      \draw (pants.midleg) to (h) to (pants.belt);
      \draw ([xshift=-0.075cm,yshift=0.15cm]f.center) to [out=90,in=-90] (copants2.leftleg) to (pants2.leftleg) to [out=90,in=-90] ([xshift=-0.075cm,yshift=-0.15cm]g.center);
      \draw ([xshift=0.075cm,yshift=0.15cm]f.center) to [out=90,in=-90] (copants2.rightleg) to (pants2.rightleg) to [out=90,in=-90] ([xshift=0.075cm,yshift=-0.15cm]g.center);
      \draw (copants2.midleg) to (f) to (copants2.belt);
    \end{pgfonlayer}
    \node[system] at ([yshift=-0.15cm]pants2.midleg) {$a'$};
    \node[system] at ([yshift=0.15cm]copants2.midleg) {$a$};
    \node[system] at ([yshift=0.15cm]copants.midleg) {$b$};
    \node[system] at ([yshift=-0.15cm]pants.midleg) {$b'$};
    \node[system] at ([yshift=0.2cm]pants.belt) {$c'$};
    \node[system] at ([yshift=-0.2cm]copants2.belt) {$c$};
  \end{tikzpicture}
\end{xlrbox}

\begin{xlrbox}{vertical_tensor_central_unit}
  \begin{tikzpicture}[baseline={([yshift=-.8ex]current bounding box.center)}]
    \node[tube,longer,bluetube] (t) {};
    \node[label] (f) at (t.center) {$f$};
    \begin{pgfonlayer}{edgelayer}
      \draw (t.top) to (t.bot);
    \end{pgfonlayer}
    \node[system] at ([yshift=0.15cm]t.top) {$a'$};
    \node[system] at ([yshift=-0.15cm]t.bot) {$a$};
  \end{tikzpicture}
\end{xlrbox}

\begin{xlrbox}{vertical_tensor_premon_unit}
  \begin{tikzpicture}[baseline={([yshift=-.8ex]current bounding box.center)}]
    \node[tube,longer] (t) {};
    \node[label] (f) at (t.center) {$f$};
    \begin{pgfonlayer}{edgelayer}
      \draw (t.top) to (t.bot);
    \end{pgfonlayer}
    \node[system] at ([yshift=0.15cm]t.top) {$a'$};
    \node[system] at ([yshift=-0.15cm]t.bot) {$a$};
  \end{tikzpicture}
\end{xlrbox}

Suppose we fix a premonoidal category $\cat{C}$ and write $J:Z\cat{C}\morph{}\cat{C}$ for the inclusion of the centre.
There is a $\vsq$-category $\opt(J)$ with objects given by pairs $\vb{a}:=(a,a')$ of those of $J$, i.e.\ pairs of those of the underlying premonoidal category $\cat{C}$.
The homs are given by $\int^{xy} Z\cat{C}(a,x\otimes b\otimes y) \otimes Z\cat{C}(x\otimes b' \otimes y, a') \morph{} \int^{xy\in Z\cat{C}} \cat{C}(a,x\otimes b\otimes y) \otimes \cat{C}(x\otimes b' \otimes y, a')$, as in Figure \ref{fig:premon_optics_homs}.
Thus $\opt(J)_0 = \opt(Z\cat{C})$ is the usual category of optics over the centre and $\opt(J)_1 = \opt_{Z\cat{C}}(\cat{C})$ is the category of optics given by the action of the centre $Z\cat{C}$ on the whole premonoidal category $\cat{C}$.
The identity on objects functor $\opt(Z\cat{C})\morph{}\opt_{Z\cat{C}}(\cat{C})$ is the one induced by $J$.

\begin{thm}\label{thm:optJ_promon}
  $\opt(J)$ is a promonoidal $\vsq$-category.
  The $\vsq$-profunctors forming the tensor product $P:\opt(J)\otimes\opt(J)\pmorph\opt(J)$ and unit $I:1\pmorph\opt(J)$ have components given in Figures \ref{fig:premon_optics_vertical_unit} and \ref{fig:premon_optics_vertical_tensor}. These are explicitly,
  \begin{align*}
    P_0(\vb{c},\vb{a},\vb{b}) & = \int^{xx'yy'} Z\cat{C}(c,x\otimes a\otimes x')\otimes Z\cat{C}(x\otimes a'\otimes x',y\otimes b\otimes y')\otimes Z\cat{C}(y\otimes b'\otimes y',c'), \\
    P_1(\vb{c},\vb{a},\vb{b}) & = \int^{xx'yy'\in Z\cat{C}} \cat{C}(c,x\otimes a\otimes x')\otimes \cat{C}(x\otimes a'\otimes x',y\otimes b\otimes y')\otimes \cat{C}(y\otimes b'\otimes y',c'), \\
    & \hspace{2cm} I_0(\vb{a}) = Z\cat{C}(a,a'), \hspace{1.5cm} I_1(\vb{a}) = \cat{C}(a,a').
  \end{align*}
\end{thm}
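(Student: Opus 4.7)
The plan is to establish the pro-effectful structure on $\opt(J)$ by invoking the proactegorical characterization of Proposition \ref{prop:proeff_is_proact}, which reduces the theorem to constructing a promonoidal structure on $\opt(Z\cat{C})$ together with a biproactegorical extension to $\opt_{Z\cat{C}}(\cat{C})$, from which the tensor profunctor $P$ and unit $I$ of the stated pseudomonoid are assembled.

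First I would recall that $\opt(Z\cat{C})$ carries the classical Pastro--Street/Riley promonoidal structure on optics over a monoidal category, with the stated $P_0$ and the representable unit $I_0$. Next I would define left and right proactions of $\opt(Z\cat{C})$ on $\opt_{Z\cat{C}}(\cat{C})$ by stacking a central optic on either side of a premonoidal optic, given concretely by coends of the same shape as $P_1$ but with one of the three optic arguments constrained to lie in $\opt(Z\cat{C})$. The verification that these proactions extend the canonical proactions along the identity-on-objects functor $\tilde{J}:\opt(Z\cat{C})\to\opt_{Z\cat{C}}(\cat{C})$ as in \eqref{eq:extend1} reduces by coend Yoneda to the observation that a premonoidal optic whose outer wires and inner core are all central is already a central optic, so the $\cat{C}$-hom factors degenerate to $Z\cat{C}$-hom factors.

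The $\vsq$-profunctor $P = (P_0, P_1, \eta)$ is then assembled from this biproactegory data via Proposition \ref{prop:v2profs}, with $\eta: P_0 \Rightarrow P_1\circ(\tilde{J}^{\mathrm{op}}\otimes\tilde{J})$ induced componentwise by the inclusion of central optics into premonoidal ones; the unit $I = (I_0, I_1, \eta_I)$ is built analogously from the central and full hom profunctors. The pentagon and triangle for the pseudomonoid in $\vsqProf^{\mathsf{Tight}}_{\funny}$ then follow componentwise: the $0$-part uses the known coherences for the promonoidal structure on $\opt(Z\cat{C})$, while the $1$-part reduces to the biproactegory axioms for associativity, unitality and left--right interchange.

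The main obstacle is producing the biproactegorical interchange isomorphism $P_1^R(P_1^L \otimes 1)\cong P_1^L(1 \otimes P_1^R)$: this is precisely where the centrality of the outer coend variables is indispensable, since one must commute two central wires past a non-central optic core, which is legitimate only because those outer wires live in $Z\cat{C}$. Once this interchange and the compatible unitors are in place, the remaining coherences reduce to routine coend-calculus manipulations over central objects, and the resulting pseudomonoid is tight because each profunctor component $P_1$ is manifestly built by extending the $P_0$-component along $\tilde{J}$ using only central coend variables.
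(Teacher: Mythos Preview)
You are proving the wrong theorem. The statement asserts that $\opt(J)$ is a \emph{promonoidal} $\vsq$-category, i.e.\ a pseudomonoid in $\vsqProf$ under the ordinary tensor $\otimes$ of $\vsq$-categories; your proposal instead establishes a \emph{pro-effectful} structure, i.e.\ a pseudomonoid in $\vsqProf^{\mathsf{Tight}}_{\funny}$ under the funny tensor. These are not the same structure, and in this paper they correspond to two distinct results: the present theorem concerns the vertical tensor $\otimes_V$ (sequential nesting of optics), while the pro-effectful structure you describe is the content of Theorem~\ref{thm:opt_is_proeff} and concerns the horizontal tensor $\otimes_H$. Concretely, the $P_1$ displayed in the statement must be a genuine $\cosmos$-profunctor $\opcat{\opt_{Z\cat{C}}(\cat{C})}\otimes\opt_{Z\cat{C}}(\cat{C})\otimes\opt_{Z\cat{C}}(\cat{C})\to\cosmos$, jointly functorial in all three non-central optic arguments. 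Your biproactegory data produces only separate left and right proactions $P_1^L,P_1^R$, each functorial in one $\opt_{Z\cat{C}}(\cat{C})$ argument at a time; the interchange isomorphism you identify as the ``main obstacle'' is exactly what one does \emph{not} need here, since vertical stacking of premonoidal optics is already jointly functorial. Invoking Proposition~\ref{prop:proeff_is_proact} is therefore inappropriate for this theorem.

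The paper's argument proceeds quite differently. It observes that the $\vsq$-category $\Tamb(J)$ of Tambara modules on $J$ is closed monoidal under profunctor composition, and that $\Tamb(J)\cong[\opcat{\opt(J)},\vsq]$ by the Pastro--Street correspondence lifted to the $\vsq$-enriched setting. A closed monoidal structure on the presheaf category of $\opt(J)$ is, by the $\vsq$-enriched form of Day's theorem, precisely a promonoidal structure on $\opt(J)$. The explicit formulae for $P_0,P_1,I_0,I_1$ are then read off by computing the composite $y_{\vb a}\otimes y_{\vb b}$ of representable Tambara modules and unpacking the resulting $\vsq$-coend via Proposition~\ref{prop:v2coend}. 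No proactegory machinery enters.
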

\begin{proof}
  In Appendix \ref{proof:optJ_promon}.
\end{proof}

\noindent
\begin{minipage}[b]{0.5\textwidth}\centering
  \scalebox{0.9}{\xusebox{premon_optics_hom1}} $ \morph{} $ \scalebox{0.9}{\xusebox{premon_optics_hom2}}
  \captionof{figure}{Homs.}
  \label{fig:premon_optics_homs}
  \scalebox{0.9}{\xusebox{vertical_tensor_central_unit}} $\morph{}$ \scalebox{0.9}{\xusebox{vertical_tensor_premon_unit}}
  \captionof{figure}{Promonoidal unit $I$.}
  \label{fig:premon_optics_vertical_unit}
\end{minipage}%
\begin{minipage}[b]{0.5\textwidth}\centering
  \scalebox{0.9}{\xusebox{vertical_tensor_central}} $ \morph{} $ \scalebox{0.9}{\xusebox{vertical_tensor_premon}}
  
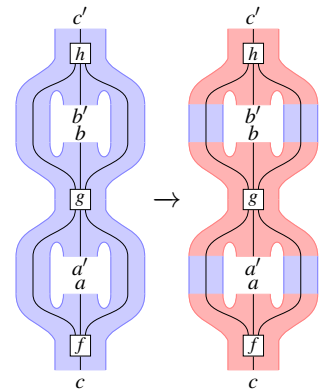
\captionof{figure}{Promonoidal tensor $P$.}
  \label{fig:premon_optics_vertical_tensor}
\end{minipage}

Now let us turn our attention to another tensor-like structure on $\opt_{1_{Z\cat{C}}}(J)$, this one induced by the premonoidal structure on $\cat{C}$.

\begin{xlrbox}{horizontal_tensor_central}
  \begin{tikzpicture}[baseline={([yshift=-.8ex]current bounding box.center)}]
    \node[5leggedpants,wider,bluetube] (pants) {};
    \node[tube,long,bluetube,anchor=top] (tube) at (pants.leftleftleg) {};
    \node[tube,long,bluetube,anchor=top] at (pants.midleg) {};
    \node[tube,long,bluetube,anchor=top] at (pants.rightrightleg) {};
    \node[5leggedcopants,wider,bluetube,anchor=leftleftleg] (copants) at (tube.bot) {};
    \node[label] (f) at ([yshift=-0.3cm]copants.center) {$f$};
    \node[label] (g) at ([yshift=0.3cm]pants.center) {$g$};
    \begin{pgfonlayer}{edgelayer}
      \draw (pants.belt) to (copants.belt);
      \draw ([xshift=-0.09cm]f.center) to [out=90,in=-90] (copants.leftleftleg) to (pants.leftleftleg) to [out=90,in=-90] ([xshift=-0.09cm]g.center);
      \draw ([xshift=0.09cm]f.center) to [out=90,in=-90] (copants.rightrightleg) to (pants.rightrightleg) to [out=90,in=-90] ([xshift=0.09cm]g.center);\
      \draw ([xshift=-0.05cm,yshift=0.1cm]f.center) to [out=90,in=-90] (copants.leftleg);
      \draw ([xshift=0.05cm,yshift=0.1cm]f.center) to [out=90,in=-90] (copants.rightleg);
      \draw ([xshift=-0.05cm,yshift=-0.1cm]g.center) to [out=-90,in=90] (pants.leftleg);
      \draw ([xshift=0.05cm,yshift=-0.1cm]g.center) to [out=-90,in=90] (pants.rightleg);
    \end{pgfonlayer}
    \node[system] at ([yshift=0.15cm]copants.leftleg) {$a$};
    \node[system] at ([yshift=-0.15cm]pants.leftleg) {$a'$};
    \node[system] at ([yshift=0.15cm]copants.rightleg) {$b$};
    \node[system] at ([yshift=-0.15cm]pants.rightleg) {$b'$};
    \node[system] at ([yshift=-0.15cm]copants.belt) {$c$};
    \node[system] at ([yshift=0.15cm]pants.belt) {$c'$};
  \end{tikzpicture}
\end{xlrbox}

\begin{xlrbox}{horizontal_tensor_premon}
  \begin{tikzpicture}[baseline={([yshift=-.8ex]current bounding box.center)}]
    \node[5leggedpants,wider] (pants) {};
    \node[tube,long,bluetube,anchor=top] (tube) at (pants.leftleftleg) {};
    \node[tube,long,bluetube,anchor=top] at (pants.midleg) {};
    \node[tube,long,bluetube,anchor=top] at (pants.rightrightleg) {};
    \node[5leggedcopants,wider,anchor=leftleftleg] (copants) at (tube.bot) {};
    \node[label] (f) at ([yshift=-0.3cm]copants.center) {$f$};
    \node[label] (g) at ([yshift=0.3cm]pants.center) {$g$};
    \begin{pgfonlayer}{edgelayer}
      \draw (pants.belt) to (copants.belt);
      \draw ([xshift=-0.09cm]f.center) to [out=90,in=-90] (copants.leftleftleg) to (pants.leftleftleg) to [out=90,in=-90] ([xshift=-0.09cm]g.center);
      \draw ([xshift=0.09cm]f.center) to [out=90,in=-90] (copants.rightrightleg) to (pants.rightrightleg) to [out=90,in=-90] ([xshift=0.09cm]g.center);\
      \draw ([xshift=-0.05cm,yshift=0.1cm]f.center) to [out=90,in=-90] (copants.leftleg);
      \draw ([xshift=0.05cm,yshift=0.1cm]f.center) to [out=90,in=-90] (copants.rightleg);
      \draw ([xshift=-0.05cm,yshift=-0.1cm]g.center) to [out=-90,in=90] (pants.leftleg);
      \draw ([xshift=0.05cm,yshift=-0.1cm]g.center) to [out=-90,in=90] (pants.rightleg);
    \end{pgfonlayer}
    \node[system] at ([yshift=0.15cm]copants.leftleg) {$a$};
    \node[system] at ([yshift=-0.15cm]pants.leftleg) {$a'$};
    \node[system] at ([yshift=0.15cm]copants.rightleg) {$b$};
    \node[system] at ([yshift=-0.15cm]pants.rightleg) {$b'$};
    \node[system] at ([yshift=-0.15cm]copants.belt) {$c$};
    \node[system] at ([yshift=0.15cm]pants.belt) {$c'$};
  \end{tikzpicture}
\end{xlrbox}

\begin{xlrbox}{horizontal_tensor_premon_unit}
  \begin{tikzpicture}[baseline={([yshift=-.8ex]current bounding box.center)}]
    \node[tube] (t) {};
    \node[tube,bluetube,long,anchor=top] (t2) at (t.bot) {};
    \node[tube,anchor=top] (t3) at (t2.bot) {};
    \node[label] (f) at (t2.center) {$f$};
    \begin{pgfonlayer}{edgelayer}
      \draw (t.top) to (t3.bot);
    \end{pgfonlayer}
    \node[system] at ([yshift=0.15cm]t.top) {$a'$};
    \node[system] at ([yshift=-0.15cm]t3.bot) {$a$};
  \end{tikzpicture}
\end{xlrbox}

\begin{thm}\label{thm:opt_is_proeff}
  $\opt(J)$ is a pro-effectful category.
  The tight $\vsq$-profunctors forming the tensor product $P:\opt(J)\otimes\opt(J)\morph{}\overline{\opt(J)}$ and unit $I:1\morph{}\overline{\opt(J)}$ have components which act on objects as,
  \begin{equation}\label{eq:proeff}
    \begin{aligned}
      P_0(\vb{c},\vb{a},\vb{b}) = P_1(\vb{c},\vb{a},\vb{b}) & = \int^{xyz} Z\cat{C}(c,x\otimes a\otimes y \otimes b\otimes z) \otimes Z\cat{C}(x\otimes a'\otimes y\otimes b'\otimes z,c'), \\
      I_0(\vb{a}) = I_1(\vb{a}) & = Z\cat{C}(a,a').
    \end{aligned}
  \end{equation}
\end{thm}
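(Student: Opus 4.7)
The plan is to apply Proposition \ref{prop:proeff_is_proact}, reducing the claim to verifying three pieces of data: (a) that $(P_0, I_0)$ makes $\opt(Z\cat{C})$ promonoidal, (b) that $(P_1, I_1)$ extends $(P_0, I_0)$ to an $\opt(Z\cat{C})$-biproaction on $\opt_{Z\cat{C}}(\cat{C})$, and (c) the mixed associator witnessing the biproactegory structure.

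First, I would verify that $P$ and $I$ are tight $\vsq$-profunctors. By the remark following the definition of tight $\vsq$-profunctor, it suffices to show that the canonical natural transformation between the $0$- and $1$-components is an isomorphism. Since equation \eqref{eq:proeff} exhibits $P_0 = P_1$ and $I_0 = I_1$ on the nose -- both sides being coends over $Z\cat{C}$ with central hom-sets as integrands -- this natural transformation is the identity. Next, $(P_0, I_0)$ defines the well-known horizontal promonoidal structure on optics over the monoidal category $Z\cat{C}$, which forms one half of the produoidal structure documented in \cite{earnshaw}; its associator and unitors follow by routine ninja Yoneda manipulations collapsing nested coends of central hom-sets.

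To extend to a biproaction, the left and right $\opt(Z\cat{C})$-proactions on $\opt_{Z\cat{C}}(\cat{C})$ are obtained by restricting the functoriality of $P_1$ in the appropriate slots; the extension squares of diagram \eqref{eq:extend1} hold automatically because $P_0 = P_1$ as trinary profunctors. The mixed associator $P_1^R(P_1^L \otimes 1) \cong P_1^L(1 \otimes P_1^R)$ reduces on both sides to the same fourfold coend over $Z\cat{C}$ of central hom-sets, up to reindexing of integration variables.

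The main obstacle is the explicit coend bookkeeping needed for the pentagon and triangle coherences of the pseudomonoid. The key simplification, however, is that at \emph{both} levels of the $\vsq$-structure the coends are indexed over the \emph{same} centre $Z\cat{C}$, so the $0$- and $1$-level coherences reduce to the associativity and unitality of the monoidal $Z\cat{C}$ combined with iterated applications of ninja Yoneda; the proof at the $1$-level thus inherits its validity directly from the corresponding promonoidal argument at the $0$-level, and the whole verification reduces to coherence in the monoidal centre.
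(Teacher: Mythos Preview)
Your overall strategy---verifying the biproactegory characterisation of Proposition~\ref{prop:proeff_is_proact} directly---is different from the paper's proof, which instead exhibits the free tight cocompletion of $\opt(J)$ as a closed effectful category (via Tambara modules on $J$ and the Day-convolution effectful structure on $\lan^L_{\opcat{J}\otimes J}$) and then invokes Theorem~\ref{thm:proeff_closed}. The paper's route packages all coherence checking into the known duoidal/Tambara machinery of \cite{pastro_street,garner,earnshaw}; your route is more elementary but requires you to supply that bookkeeping by hand.

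There is, however, a genuine gap in your argument. You write that the proactions $P_1^L, P_1^R$ are ``obtained by restricting the functoriality of $P_1$'' and that the extension squares \eqref{eq:extend1} hold ``because $P_0 = P_1$ as trinary profunctors''. But the formula \eqref{eq:proeff} involves only $Z\cat{C}$-hom-objects, so as written it is functorial only in $\opt(Z\cat{C})$-morphisms; it does not by itself define a profunctor with $\opt_{Z\cat{C}}(\cat{C})$ in any slot. The equality $P_0 = P_1$ in the statement of the theorem records only that both components of the tight $\vsq$-profunctor assign the \emph{same presheaf on} $\opt(Z\cat{C})$ to a given pair of objects---it says nothing about how $P_1$ acts on non-central optics, and certainly does not make $P_0$ and $P_1^L$ ``the same trinary profunctor'', since they have different source and target categories. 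As the paper explains immediately after the theorem, the action of $P_1$ on non-central morphisms goes via natural transformations between the left Kan extensions $\lan_{\opcat{J}\otimes J}P_0$, which have the form $\int^{xyz\in Z\cat{C}}\cat{C}(c,x\otimes a\otimes y\otimes b\otimes z)\otimes\cat{C}(x\otimes a'\otimes y\otimes b'\otimes z,c')$; it is \emph{this} expression, with $\cat{C}$-homs rather than $Z\cat{C}$-homs, that carries the $\opt_{Z\cat{C}}(\cat{C})$-functoriality needed to make $P_1^L$ a genuine proaction.

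Your approach can be salvaged: define $P_1^L$ and $P_1^R$ as the Kan extensions of $P_0$ along the relevant copies of $J$, check functoriality in non-central optics using the binoidal whiskering of $\cat{C}$, and then the extension squares \eqref{eq:extend1} hold essentially by the unit of the Kan extension adjunction. Once this is in place your reduction of the coherences to the monoidal structure of $Z\cat{C}$ goes through---but you must also verify that those coherence isomorphisms are natural in the $\opt_{Z\cat{C}}(\cat{C})$-variables, not just the central ones, and this again requires the Kan-extended description rather than the formula in \eqref{eq:proeff}.
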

\begin{proof}
  In Appendix \ref{proof:opt_is_proeff}.
\end{proof}

On the homs of $Z\cat{C}$, $P_0$ and $I_0$ act in the expected way, essentially by nesting of optics.
On the homs of $\cat{C}$, $P_1$ and $I_1$ act somewhat unusually.
Formally non-central optics are sent to natural transformations between left Kan extensions of the expressions in \eqref{eq:proeff}, that is between presheaves of the form:
\begin{align*}
  (\lan_{\opcat{J}\otimes J} P_0)(\vb{c},\vb{a},\vb{b}) & \cong \int^{wvxyz} \cat{C}(c,Jw) \otimes Z\cat{C}(w,x\otimes a\otimes y \otimes b\otimes z) \otimes Z\cat{C}(x\otimes a'\otimes y\otimes b'\otimes z,v) \otimes \cat{C}(Jv,c') \\
  & \cong \int^{xyz\in Z\cat{C}} \cat{C}(c,x\otimes a\otimes y \otimes b\otimes z) \otimes \cat{C}(x\otimes a'\otimes y\otimes b'\otimes z,c') \\
  (\lan_{\opcat{J}\otimes J} I_0)(\vb{a}) & \cong \int^{xy} \cat{C}(a,Jx)\otimes Z\cat{C}(x,y) \otimes \cat{C}(Jy,a') \cong \int^{x\in Z\cat{C}} \cat{C}(a,x) \otimes \cat{C}(x,a')
\end{align*}
This justifies thinking of the pro-effectful structure as having the components described in Figures \ref{fig:premon_optics_horz_tensor} and \ref{fig:premon_optics_horz_unit}.

\noindent
\begin{minipage}[b]{0.6\textwidth}
  \centering
  \scalebox{0.9}{\xusebox{horizontal_tensor_central}} $ \morph{} $ \scalebox{0.9}{\xusebox{horizontal_tensor_premon}}
  
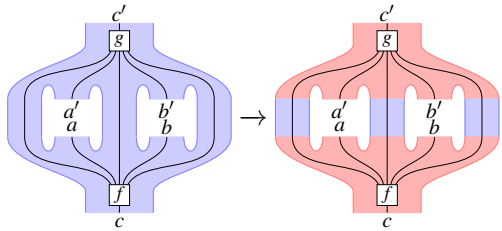
\captionof{figure}{Pro-effectful tensor.}
  \label{fig:premon_optics_horz_tensor}
\end{minipage}%
\begin{minipage}[b]{0.4\textwidth}
  \centering
  \scalebox{0.9}{\xusebox{vertical_tensor_central_unit}} $ \morph{} $ \scalebox{0.9}{\xusebox{horizontal_tensor_premon_unit}}
  
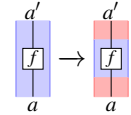
\captionof{figure}{Pro-effectful unit.}
  \label{fig:premon_optics_horz_unit}
\end{minipage}

\subsubsection*{Acknowledgements}
The authors want to thank Matt Earnshaw and Matt Wilson for discussion; the authors also want to thank the anonymous reviewers at ACT23 for multiple suggestions that improved this article.
James Hefford is supported by University College London and the EPSRC [grant number EP/L015242/1]. Mario Román is supported by the European Union through the ESF Estonian IT Academy research measure (2014-2020.4.05.19-0001).

\bibliographystyle{eptcs}
\bibliography{bibliography}

\begin{thebibliography}{10}
\providecommand{\bibitemdeclare}[2]{}
\providecommand{\surnamestart}{}
\providecommand{\surnameend}{}
\providecommand{\urlprefix}{Available at }
\providecommand{\url}[1]{\texttt{#1}}
\providecommand{\href}[2]{\texttt{#2}}
\providecommand{\urlalt}[2]{\href{#1}{#2}}
\providecommand{\doi}[1]{doi:\urlalt{https://doi.org/#1}{#1}}
\providecommand{\eprint}[1]{arXiv:\urlalt{https://arxiv.org/abs/#1}{#1}}
\providecommand{\bibinfo}[2]{#2}

\bibitemdeclare{inproceedings}{abouSalehCGMS16}
\bibitem{abouSalehCGMS16}
\bibinfo{author}{Faris \surnamestart Abou{-}Saleh\surnameend}, \bibinfo{author}{James \surnamestart Cheney\surnameend}, \bibinfo{author}{Jeremy \surnamestart Gibbons\surnameend}, \bibinfo{author}{James \surnamestart McKinna\surnameend} \& \bibinfo{author}{Perdita \surnamestart Stevens\surnameend} (\bibinfo{year}{2016}): \emph{\bibinfo{title}{Reflections on Monadic Lenses}}.
\newblock In: {\slshape \bibinfo{booktitle}{A List of Successes That Can Change the World - Essays Dedicated to Philip Wadler on the Occasion of His 60th Birthday}}, {\slshape \bibinfo{series}{Lecture Notes in Computer Science}} \bibinfo{volume}{9600}, \bibinfo{publisher}{Springer}, pp. \bibinfo{pages}{1--31}, \doi{10.1007/978-3-319-30936-1\_1}.

\bibitemdeclare{inproceedings}{abramsky_coecke}
\bibitem{abramsky_coecke}
\bibinfo{author}{Samson \surnamestart Abramsky\surnameend} \& \bibinfo{author}{Bob \surnamestart Coecke\surnameend} (\bibinfo{year}{2004}): \emph{\bibinfo{title}{A categorical semantics of quantum protocols}}.
\newblock In: {\slshape \bibinfo{booktitle}{Proceedings of the 19th Annual IEEE Symposium on Logic in Computer Science, 2004}}, pp. \bibinfo{pages}{415--425}, \doi{10.1109/LICS.2004.1319636}.

\bibitemdeclare{book}{asperti}
\bibitem{asperti}
\bibinfo{author}{Andrea \surnamestart Asperti\surnameend} \& \bibinfo{author}{Guiseppe \surnamestart Longo\surnameend} (\bibinfo{year}{1991}): \emph{\bibinfo{title}{Categories, Types, and Structures}}.
\newblock \bibinfo{publisher}{MIT Press}.

\bibitemdeclare{article}{baez_petri}
\bibitem{baez_petri}
\bibinfo{author}{John~C. \surnamestart Baez\surnameend}, \bibinfo{author}{John \surnamestart Foley\surnameend} \& \bibinfo{author}{Joe \surnamestart Moeller\surnameend} (\bibinfo{year}{2019}): \emph{\bibinfo{title}{Network {M}odels from {P}etri {N}ets with {C}atalysts}}.
\newblock {\slshape \bibinfo{journal}{{Compositionality}}} \bibinfo{volume}{1}, \doi{10.32408/compositionality-1-4}.

\bibitemdeclare{inproceedings}{benabou_bicategories}
\bibitem{benabou_bicategories}
\bibinfo{author}{Jean \surnamestart B{\'e}nabou\surnameend} (\bibinfo{year}{1967}): \emph{\bibinfo{title}{Introduction to bicategories}}.
\newblock In: {\slshape \bibinfo{booktitle}{Reports of the Midwest Category Seminar}}, \bibinfo{publisher}{Springer Berlin Heidelberg}, \bibinfo{address}{Berlin, Heidelberg}, pp. \bibinfo{pages}{1--77}, \doi{10.1007/BFb0074299}.

\bibitemdeclare{inproceedings}{boisseau_optics}
\bibitem{boisseau_optics}
\bibinfo{author}{Guillaume \surnamestart Boisseau\surnameend} (\bibinfo{year}{2020}): \emph{\bibinfo{title}{{String Diagrams for Optics}}}.
\newblock In \bibinfo{editor}{Zena~M. \surnamestart Ariola\surnameend}, editor: {\slshape \bibinfo{booktitle}{5th International Conference on Formal Structures for Computation and Deduction (FSCD 2020)}}, {\slshape \bibinfo{series}{Leibniz International Proceedings in Informatics (LIPIcs)}} \bibinfo{volume}{167}, \bibinfo{publisher}{Schloss Dagstuhl--Leibniz-Zentrum f{\"u}r Informatik}, \bibinfo{address}{Dagstuhl, Germany}, pp. \bibinfo{pages}{17:1--17:18}, \doi{10.4230/LIPIcs.FSCD.2020.17}.

\bibitemdeclare{misc}{bolthedges}
\bibitem{bolthedges}
\bibinfo{author}{Joe \surnamestart Bolt\surnameend}, \bibinfo{author}{Jules \surnamestart Hedges\surnameend} \& \bibinfo{author}{Philipp \surnamestart Zahn\surnameend} (\bibinfo{year}{2019}): \emph{\bibinfo{title}{Bayesian open games}}, \doi{10.48550/arXiv.1910.03656}.
\newblock \eprint{1910.03656}.

\bibitemdeclare{inproceedings}{bonchi_affine}
\bibitem{bonchi_affine}
\bibinfo{author}{Filippo \surnamestart Bonchi\surnameend}, \bibinfo{author}{Robin \surnamestart Piedeleu\surnameend}, \bibinfo{author}{Pawel \surnamestart Soboci{\'n}ski\surnameend} \& \bibinfo{author}{Fabio \surnamestart Zanasi\surnameend} (\bibinfo{year}{2019}): \emph{\bibinfo{title}{Graphical affine algebra}}.
\newblock In: {\slshape \bibinfo{booktitle}{2019 34th Annual ACM/IEEE Symposium on Logic in Computer Science (LICS)}}, \bibinfo{organization}{IEEE}, pp. \bibinfo{pages}{1--12}, \doi{10.1109/LICS.2019.8785877}.

\bibitemdeclare{misc}{braithwaitehedges}
\bibitem{braithwaitehedges}
\bibinfo{author}{Dylan \surnamestart Braithwaite\surnameend} \& \bibinfo{author}{Jules \surnamestart Hedges\surnameend} (\bibinfo{year}{2022}): \emph{\bibinfo{title}{Dependent Bayesian Lenses: Categories of Bidirectional Markov Kernels with Canonical Bayesian Inversion}}, \doi{10.48550/arXiv.2209.14728}.
\newblock \eprint{2209.14728}.

\bibitemdeclare{article}{campbell_skew}
\bibitem{campbell_skew}
\bibinfo{author}{Alexander \surnamestart Campbell\surnameend} (\bibinfo{year}{2018}): \emph{\bibinfo{title}{Skew-Enriched Categories}}.
\newblock {\slshape \bibinfo{journal}{Applied Categorical Structures}} \bibinfo{volume}{26}, p. \bibinfo{pages}{597–615}, \doi{10.1007/s10485-017-9504-0}.

\bibitemdeclare{misc}{capucci_actegories}
\bibitem{capucci_actegories}
\bibinfo{author}{Matteo \surnamestart Capucci\surnameend} \& \bibinfo{author}{Bruno \surnamestart Gavranović\surnameend} (\bibinfo{year}{2022}): \emph{\bibinfo{title}{Actegories for the Working Amthematician}}, \doi{10.48550/arXiv.2203.16351}.
\newblock \eprint{2203.16351}.

\bibitemdeclare{article}{chiribella_circuits}
\bibitem{chiribella_circuits}
\bibinfo{author}{Giulio \surnamestart Chiribella\surnameend}, \bibinfo{author}{Giacomo~Mauro \surnamestart D'Ariano\surnameend} \& \bibinfo{author}{Paolo \surnamestart Perinotti\surnameend} (\bibinfo{year}{2008}): \emph{\bibinfo{title}{Quantum Circuit Architecture}}.
\newblock {\slshape \bibinfo{journal}{Physical Review Letters}} \bibinfo{volume}{101}(\bibinfo{number}{6}), \doi{10.1103/physrevlett.101.060401}.

\bibitemdeclare{article}{chiribella_supermaps}
\bibitem{chiribella_supermaps}
\bibinfo{author}{Giulio \surnamestart Chiribella\surnameend}, \bibinfo{author}{Giacomo~Mauro \surnamestart D'Ariano\surnameend} \& \bibinfo{author}{Paolo \surnamestart Perinotti\surnameend} (\bibinfo{year}{2008}): \emph{\bibinfo{title}{Transforming quantum operations: Quantum supermaps}}.
\newblock {\slshape \bibinfo{journal}{{EPL} (Europhysics Letters)}} \bibinfo{volume}{83}(\bibinfo{number}{3}), p. \bibinfo{pages}{30004}, \doi{10.1209/0295-5075/83/30004}.

\bibitemdeclare{misc}{clarke_profunctor}
\bibitem{clarke_profunctor}
\bibinfo{author}{Bryce \surnamestart Clarke\surnameend}, \bibinfo{author}{Derek \surnamestart Elkins\surnameend}, \bibinfo{author}{Jeremy \surnamestart Gibbons\surnameend}, \bibinfo{author}{Fosco \surnamestart Loregian\surnameend}, \bibinfo{author}{Bartosz \surnamestart Milewski\surnameend}, \bibinfo{author}{Emily \surnamestart Pillmore\surnameend} \& \bibinfo{author}{Mario \surnamestart Román\surnameend} (\bibinfo{year}{2020}): \emph{\bibinfo{title}{Profunctor optics, a categorical update}}, \doi{10.48550/arXiv.2001.07488}.
\newblock \eprint{2001.07488}.

\bibitemdeclare{book}{coecke_kissinger_2017}
\bibitem{coecke_kissinger_2017}
\bibinfo{author}{Bob \surnamestart Coecke\surnameend} \& \bibinfo{author}{Aleks \surnamestart Kissinger\surnameend} (\bibinfo{year}{2017}): \emph{\bibinfo{title}{Picturing Quantum Processes: A First Course in Quantum Theory and Diagrammatic Reasoning}}.
\newblock \bibinfo{publisher}{Cambridge University Press}, \doi{10.1017/9781316219317}.

\bibitemdeclare{phdthesis}{comeau}
\bibitem{comeau}
\bibinfo{author}{Marc \surnamestart Comeau\surnameend} (\bibinfo{year}{2012}): \emph{\bibinfo{title}{Premonoidal *-Categories and Algebraic Quantum Field Theory}}.
\newblock Ph.D. thesis, \bibinfo{school}{University of Ottawa}, \doi{10.20381/ruor-5524}.

\bibitemdeclare{phdthesis}{day_thesis}
\bibitem{day_thesis}
\bibinfo{author}{Brian \surnamestart Day\surnameend} (\bibinfo{year}{1970}): \emph{\bibinfo{title}{Construction of Biclosed Categories}}.
\newblock Ph.D. thesis, \bibinfo{school}{University of New South Wales}, \doi{10.26190/unsworks/8048}.

\bibitemdeclare{inproceedings}{day}
\bibitem{day}
\bibinfo{author}{Brian \surnamestart Day\surnameend} (\bibinfo{year}{1970}): \emph{\bibinfo{title}{On closed categories of functors}}.
\newblock In: {\slshape \bibinfo{booktitle}{Reports of the Midwest Category Seminar IV}}, \bibinfo{volume}{137}, \bibinfo{publisher}{Springer Berlin Heidelberg}, \bibinfo{address}{Berlin, Heidelberg}, pp. \bibinfo{pages}{1--38}, \doi{10.1007/BFb0060438}.

\bibitemdeclare{article}{day_monoidal_monads}
\bibitem{day_monoidal_monads}
\bibinfo{author}{Brian \surnamestart Day\surnameend} (\bibinfo{year}{1977}): \emph{\bibinfo{title}{Note on monoidal monads}}.
\newblock {\slshape \bibinfo{journal}{Journal of the Australian Mathematical Society}} \bibinfo{volume}{23}(\bibinfo{number}{3}), p. \bibinfo{pages}{292–311}, \doi{10.1017/S1446788700018929}.

\bibitemdeclare{inproceedings}{day_centres}
\bibitem{day_centres}
\bibinfo{author}{Brian \surnamestart Day\surnameend}, \bibinfo{author}{Elango \surnamestart Panchadcharam\surnameend} \& \bibinfo{author}{Ross \surnamestart Street\surnameend} (\bibinfo{year}{2005}): \emph{\bibinfo{title}{On centres and lax centres for promonoidal categories}}.
\newblock In: {\slshape \bibinfo{booktitle}{Colloque International Charles Ehresmann: 100 ans}}.

\bibitemdeclare{misc}{earnshaw}
\bibitem{earnshaw}
\bibinfo{author}{Matt \surnamestart Earnshaw\surnameend}, \bibinfo{author}{James \surnamestart Hefford\surnameend} \& \bibinfo{author}{Mario \surnamestart Román\surnameend} (\bibinfo{year}{2023}): \emph{\bibinfo{title}{The Produoidal Algebra of Process Decomposition}}, \doi{10.48550/ARXIV.2301.11867}.
\newblock \eprint{2301.11867}.

\bibitemdeclare{article}{fiore}
\bibitem{fiore}
\bibinfo{author}{Marcelo \surnamestart Fiore\surnameend}, \bibinfo{author}{Nicola \surnamestart Gambino\surnameend}, \bibinfo{author}{Martin \surnamestart Hyland\surnameend} \& \bibinfo{author}{Glynn \surnamestart Winskel\surnameend} (\bibinfo{year}{2018}): \emph{\bibinfo{title}{Relative pseudomonads, Kleisli bicategories, and substitution monoidal structures}}.
\newblock {\slshape \bibinfo{journal}{Selecta Mathematica}} \bibinfo{volume}{24}, p. \bibinfo{pages}{2791–2830}, \doi{10.1007/s00029-017-0361-3}.

\bibitemdeclare{article}{foltz}
\bibitem{foltz}
\bibinfo{author}{François \surnamestart Foltz\surnameend}, \bibinfo{author}{Christian \surnamestart Lair\surnameend} \& \bibinfo{author}{Gregory~M. \surnamestart Kelly\surnameend} (\bibinfo{year}{1980}): \emph{\bibinfo{title}{Algebraic categories with few monoidal biclosed structures or none}}.
\newblock {\slshape \bibinfo{journal}{Journal of Pure and Applied Algebra}} \bibinfo{volume}{17}, pp. \bibinfo{pages}{171--177}, \doi{10.1016/0022-4049(80)90082-1}.

\bibitemdeclare{article}{garner}
\bibitem{garner}
\bibinfo{author}{Richard \surnamestart Garner\surnameend} \& \bibinfo{author}{Ignacio~L{\'o}pez \surnamestart Franco\surnameend} (\bibinfo{year}{2016}): \emph{\bibinfo{title}{Commutativity}}.
\newblock {\slshape \bibinfo{journal}{Journal of Pure and Applied Algebra}} \bibinfo{volume}{220}(\bibinfo{number}{5}), pp. \bibinfo{pages}{1707--1751}, \doi{10.1016/j.jpaa.2015.09.003}.

\bibitemdeclare{article}{hefford_coend}
\bibitem{hefford_coend}
\bibinfo{author}{James \surnamestart Hefford\surnameend} \& \bibinfo{author}{Cole \surnamestart Comfort\surnameend} (\bibinfo{year}{2023}): \emph{\bibinfo{title}{Coend Optics for Quantum Combs}}.
\newblock {\slshape \bibinfo{journal}{EPTCS}} \bibinfo{volume}{380}, pp. \bibinfo{pages}{63--76}, \doi{10.4204/EPTCS.380.4}.

\bibitemdeclare{article}{hefford_spacetime}
\bibitem{hefford_spacetime}
\bibinfo{author}{James \surnamestart Hefford\surnameend} \& \bibinfo{author}{Aleks \surnamestart Kissinger\surnameend} (\bibinfo{year}{2023}): \emph{\bibinfo{title}{On the Pre- and Promonoidal Structure of Spacetime}}.
\newblock {\slshape \bibinfo{journal}{EPTCS}} \bibinfo{volume}{380}, pp. \bibinfo{pages}{284--306}, \doi{10.4204/EPTCS.380.17}.

\bibitemdeclare{article}{jacobs_arrows}
\bibitem{jacobs_arrows}
\bibinfo{author}{Bart \surnamestart Jacobs\surnameend}, \bibinfo{author}{Chris \surnamestart Heunen\surnameend} \& \bibinfo{author}{Ichiro \surnamestart Hasuo\surnameend} (\bibinfo{year}{2009}): \emph{\bibinfo{title}{Categorical semantics for arrows}}.
\newblock {\slshape \bibinfo{journal}{Journal of Functional Programming}} \bibinfo{volume}{19}(\bibinfo{number}{3-4}), p. \bibinfo{pages}{403–438}, \doi{10.1017/S0956796809007308}.

\bibitemdeclare{article}{janelidze_actions}
\bibitem{janelidze_actions}
\bibinfo{author}{G.~\surnamestart Janelidze\surnameend} \& \bibinfo{author}{G.M. \surnamestart Kelly\surnameend} (\bibinfo{year}{2001}): \emph{\bibinfo{title}{A Note On Actions of a Monoidal Category}}.
\newblock {\slshape \bibinfo{journal}{Theory and Applications of Categories}} \bibinfo{volume}{9}(\bibinfo{number}{4}), p. \bibinfo{pages}{61–91}.

\bibitemdeclare{inproceedings}{jeffrey97}
\bibitem{jeffrey97}
\bibinfo{author}{Alan \surnamestart Jeffrey\surnameend} (\bibinfo{year}{1997}): \emph{\bibinfo{title}{Premonoidal categories and flow graphs}}.
\newblock In: {\slshape \bibinfo{booktitle}{Second Workshop on Higher-Order Operational Techniques in Semantics, {HOOTS} 1997, Stanford, CA, USA, December 8-12, 1997}}, {\slshape \bibinfo{series}{Electronic Notes in Theoretical Computer Science}}~\bibinfo{volume}{10}, \bibinfo{publisher}{Elsevier}, p.~\bibinfo{pages}{51}, \doi{10.1016/S1571-0661(05)80688-7}.

\bibitemdeclare{article}{joyal_street1}
\bibitem{joyal_street1}
\bibinfo{author}{André \surnamestart Joyal\surnameend} \& \bibinfo{author}{Ross \surnamestart Street\surnameend} (\bibinfo{year}{1991}): \emph{\bibinfo{title}{The geometry of tensor calculus, {I}}}.
\newblock {\slshape \bibinfo{journal}{Advances in Mathematics}} \bibinfo{volume}{88}(\bibinfo{number}{1}), pp. \bibinfo{pages}{55--112}, \doi{10.1016/0001-8708(91)90003-P}.

\bibitemdeclare{book}{kelly}
\bibitem{kelly}
\bibinfo{author}{Gregory \surnamestart Kelly\surnameend} (\bibinfo{year}{1982}): \emph{\bibinfo{title}{Basic Concepts of Enriched Category Theory}}.
\newblock \bibinfo{publisher}{Cambridge University Press}.

\bibitemdeclare{article}{kissinger_caus}
\bibitem{kissinger_caus}
\bibinfo{author}{Aleks \surnamestart Kissinger\surnameend} \& \bibinfo{author}{Sander \surnamestart Uijlen\surnameend} (\bibinfo{year}{2019}): \emph{\bibinfo{title}{{A categorical semantics for causal structure}}}.
\newblock {\slshape \bibinfo{journal}{Logical Methods in Computer Science}}, \doi{10.23638/LMCS-15(3:15)2019}.

\bibitemdeclare{article}{lack_shulman}
\bibitem{lack_shulman}
\bibinfo{author}{Stephen \surnamestart Lack\surnameend} \& \bibinfo{author}{Michael \surnamestart Shulman\surnameend} (\bibinfo{year}{2012}): \emph{\bibinfo{title}{Enhanced 2-categories and limits for lax morphisms}}.
\newblock {\slshape \bibinfo{journal}{Advances in Mathematics}} \bibinfo{volume}{229}(\bibinfo{number}{1}), pp. \bibinfo{pages}{294--356}, \doi{10.1016/j.aim.2011.08.014}.

\bibitemdeclare{book}{levy_push}
\bibitem{levy_push}
\bibinfo{author}{Paul \surnamestart Levy\surnameend} (\bibinfo{year}{2003}): \emph{\bibinfo{title}{Call-By-Push-Value}}.
\newblock \bibinfo{publisher}{Springer Dordrecht}, \doi{10.1007/978-94-007-0954-6}.

\bibitemdeclare{article}{pastro_street}
\bibitem{pastro_street}
\bibinfo{author}{Craig \surnamestart Pastro\surnameend} \& \bibinfo{author}{Ross \surnamestart Street\surnameend} (\bibinfo{year}{2008}): \emph{\bibinfo{title}{Doubles for Monoidal Categories}}.
\newblock {\slshape \bibinfo{journal}{Theory and Applications of Categories}} \bibinfo{volume}{21}(\bibinfo{number}{4}), pp. \bibinfo{pages}{61--75}.

\bibitemdeclare{article}{power_structure}
\bibitem{power_structure}
\bibinfo{author}{John \surnamestart Power\surnameend} (\bibinfo{year}{2002}): \emph{\bibinfo{title}{Premonoidal categories as categories with algebraic structure}}.
\newblock {\slshape \bibinfo{journal}{Theoretical Computer Science}} \bibinfo{volume}{278}(\bibinfo{number}{1}), pp. \bibinfo{pages}{303--321}, \doi{10.1016/S0304-3975(00)00340-6}.
\newblock \bibinfo{note}{Mathematical Foundations of Programming Semantics 1996}.

\bibitemdeclare{article}{power_generic}
\bibitem{power_generic}
\bibinfo{author}{John \surnamestart Power\surnameend} (\bibinfo{year}{2006}): \emph{\bibinfo{title}{Generic models for computational effects}}.
\newblock {\slshape \bibinfo{journal}{Theoretical Computer Science}} \bibinfo{volume}{364}(\bibinfo{number}{2}), pp. \bibinfo{pages}{254--269}, \doi{10.1016/j.tcs.2006.08.006}.

\bibitemdeclare{article}{power_premonoidal}
\bibitem{power_premonoidal}
\bibinfo{author}{John \surnamestart Power\surnameend} \& \bibinfo{author}{Edmund \surnamestart Robinson\surnameend} (\bibinfo{year}{1997}): \emph{\bibinfo{title}{Premonoidal categories and notions of computation}}.
\newblock {\slshape \bibinfo{journal}{Mathematical Structures in Computer Science}} \bibinfo{volume}{7}(\bibinfo{number}{5}), p. \bibinfo{pages}{453–468}, \doi{10.1017/S0960129597002375}.

\bibitemdeclare{inproceedings}{power02closed}
\bibitem{power02closed}
\bibinfo{author}{John \surnamestart Power\surnameend} \& \bibinfo{author}{Hayo \surnamestart Thielecke\surnameend} (\bibinfo{year}{2002}): \emph{\bibinfo{title}{Closed Freyd-and $\kappa$-categories}}.
\newblock In: {\slshape \bibinfo{booktitle}{Automata, Languages and Programming: 26th International Colloquium, ICALP’99 Prague, Czech Republic, July 11--15, 1999 Proceedings}}, \bibinfo{organization}{Springer}, pp. \bibinfo{pages}{625--634}.

\bibitemdeclare{misc}{riley_optics}
\bibitem{riley_optics}
\bibinfo{author}{Mitchell \surnamestart Riley\surnameend} (\bibinfo{year}{2018}): \emph{\bibinfo{title}{Categories of Optics}}, \doi{10.48550/arXiv.1809.00738}.
\newblock \eprint{1809.00738}.

\bibitemdeclare{misc}{roman_optics}
\bibitem{roman_optics}
\bibinfo{author}{Mario \surnamestart Román\surnameend} (\bibinfo{year}{2020}): \emph{\bibinfo{title}{Profunctor optics and traversals}}, \doi{10.48550/arXiv.2001.08045}.
\newblock \eprint{2001.08045}.

\bibitemdeclare{article}{roman_coend}
\bibitem{roman_coend}
\bibinfo{author}{Mario \surnamestart Román\surnameend} (\bibinfo{year}{2021}): \emph{\bibinfo{title}{Open Diagrams via Coend Calculus}}.
\newblock {\slshape \bibinfo{journal}{EPTCS}} \bibinfo{volume}{333}, pp. \bibinfo{pages}{65--78}, \doi{10.4204/eptcs.333.5}.

\bibitemdeclare{article}{roman_promonads}
\bibitem{roman_promonads}
\bibinfo{author}{Mario \surnamestart Román\surnameend} (\bibinfo{year}{2023}): \emph{\bibinfo{title}{Promonads and String Diagrams for Effectful Categories}}.
\newblock {\slshape \bibinfo{journal}{EPTCS}} \bibinfo{volume}{380}, pp. \bibinfo{pages}{344--361}, \doi{10.4204/EPTCS.380.20}.

\bibitemdeclare{misc}{spivak2019generalized}
\bibitem{spivak2019generalized}
\bibinfo{author}{David~I. \surnamestart Spivak\surnameend} (\bibinfo{year}{2019}): \emph{\bibinfo{title}{Generalized Lens Categories via functors $\mathcal{C}^{\rm op}\to\mathsf{Cat}$}}, \doi{10.48550/arXiv.1908.02202}.
\newblock \eprint{1908.02202}.

\bibitemdeclare{article}{staton_premulticategories}
\bibitem{staton_premulticategories}
\bibinfo{author}{Sam \surnamestart Staton\surnameend} \& \bibinfo{author}{Paul~Blain \surnamestart Levy\surnameend} (\bibinfo{year}{2013}): \emph{\bibinfo{title}{Universal Properties of Impure Programming Languages}}.
\newblock {\slshape \bibinfo{journal}{SIGPLAN Not.}} \bibinfo{volume}{48}(\bibinfo{number}{1}), p. \bibinfo{pages}{179–192}, \doi{10.1145/2480359.2429091}.

\bibitemdeclare{article}{tambara}
\bibitem{tambara}
\bibinfo{author}{Daisuke \surnamestart Tambara\surnameend} (\bibinfo{year}{2006}): \emph{\bibinfo{title}{{Distributors on a tensor category}}}.
\newblock {\slshape \bibinfo{journal}{Hokkaido Mathematical Journal}} \bibinfo{volume}{35}(\bibinfo{number}{2}), pp. \bibinfo{pages}{379 -- 425}, \doi{10.14492/hokmj/1285766362}.

\bibitemdeclare{article}{weber}
\bibitem{weber}
\bibinfo{author}{Mark \surnamestart Weber\surnameend} (\bibinfo{year}{2013}): \emph{\bibinfo{title}{Free Products of Higher Operad Algebras}}.
\newblock {\slshape \bibinfo{journal}{Theory and Applications of Categories}} \bibinfo{volume}{28}(\bibinfo{number}{2}).

\bibitemdeclare{misc}{wilson_locality}
\bibitem{wilson_locality}
\bibinfo{author}{Matt \surnamestart Wilson\surnameend}, \bibinfo{author}{Giulio \surnamestart Chiribella\surnameend} \& \bibinfo{author}{Aleks \surnamestart Kissinger\surnameend} (\bibinfo{year}{2022}): \emph{\bibinfo{title}{Quantum Supermaps are Characterized by Locality}}, \doi{10.48550/ARXIV.2205.09844}.
\newblock \eprint{2205.09844}.

\bibitemdeclare{article}{wolff}
\bibitem{wolff}
\bibinfo{author}{Harvey \surnamestart Wolff\surnameend} (\bibinfo{year}{1974}): \emph{\bibinfo{title}{V-cat and V-graph}}.
\newblock {\slshape \bibinfo{journal}{Journal of Pure and Applied Algebra}} \bibinfo{volume}{4}(\bibinfo{number}{2}), pp. \bibinfo{pages}{123--135}, \doi{10.1016/0022-4049(74)90018-8}.

\end{thebibliography}

\appendix

\section{Proofs}

\subsection{Proof of Theorem \ref{thm:v2cat}}\label{proof:v2cat}
\begin{proof}[Proof sketch]
  The behaviour of the funny tensor on functors is encapsulated by the following cube.
  \begin{equation*}
    \begin{tikzcd}[row sep=small, column sep=small,cramped]
      & \cat{B}_0\otimes\cat{D}_0 \ar[rr] \ar[dd] & & \cat{B}_1\otimes\cat{D}_0 \ar[dd] \\
      \cat{A}_0\otimes\cat{C}_0 \ar[rr, crossing over] \ar[dd] \ar[ur,"F_0\otimes G_0"] & & \cat{A}_1\otimes\cat{C}_0  \ar[ur,"F_1\otimes G_0"'] \\
      & \cat{B}_0\otimes\cat{D}_1 \ar[rr] & & \cat{B}_1\funny\cat{D}_1 \arrow[ul, phantom, "\ulcorner", very near start] \\
      \cat{A}_0\otimes\cat{C}_1  \ar[rr] \ar[ur,"F_0\otimes G_1"] & & \cat{A}_1\funny\cat{C}_1  \ar[from=uu, crossing over] \ar[ur,dashed,"F_1\funny G_1"'] \arrow[ul, phantom, "\ulcorner", very near start] \\
    \end{tikzcd}
  \end{equation*}
  Functoriality of $\square$ on 1-cells follows by pasting of cubes and the uniqueness of the arrows induced by the pushout.

  Explicitly, we have $(\alpha:F\Rightarrow F')\funny (\beta:G\Rightarrow G')$ has components $(\alpha\funny\beta)^0_{cd} = (\alpha^0_c,\beta^0_d)$ and $(\alpha\funny\beta)^1_{cd} = (\alpha^1_c,\beta^1_d) = (J\alpha^0_c,J\beta^0_d)$.
  Naturality of this transformation follows from naturality of $\alpha$ and $\beta$ and from the centrality of the components.
\end{proof}

\subsection{Proof of Proposition \ref{prop:v2coend}}\label{proof:v2coend}
\begin{proof}
  Suppose we have a $\vsq$-extranatural family $w_c:P(c,c)\morph{}d$.
  Then we have the following commutative diagram:
  \begin{equation*}
    \begin{tikzcd}[row sep=small, column sep=small,cramped]
      \cat{C}_1(c,c') \otimes P_1(c',c) \ar[rrr] \ar[ddd] & & & P_1(c',c') \ar[ddd,"w_{c'}^1"] \\
      & \cat{C}_0(c,c') \otimes P_0(c',c) \ar[d] \ar[r] \ar[lu,"J\otimes\eta_{c'c}"] & P_0(c',c') \ar[d,"w_{c'}^0"] \ar[ru,"\eta_{c'c'}"'] & \\
      & P_0(c,c) \ar[r,"w_c^0"'] \ar[ld,"\eta_{cc}"'] & d_0 \ar[dr,"d"] & \\
      P_1(c,c) \ar[rrr,"w_c^1"'] & & & d_1
    \end{tikzcd}
  \end{equation*}
  In particular, the families $w_c^0:P_0(c,c)\morph{}d_0$ and $w_c^1:P_1(c,c)\morph{}d_1$ are $\cosmos$-extranatural and thus factorise via their respective coends giving arrows $\int^c P_0(c,c)\morph{}d_0$ and $\int^c P_1(c,c) \morph{}d_1$ making the obvious diagrams commute.
  Now note that the arrows $P_0(c,c)\morph{\eta_{cc}}P_1(c,c)\morph{\text{copr}_c}\int^c P_1(c,c)$ are $\cosmos$-extranatural, this induces a arrow $\int^c P_0(c,c) \morph{} \int^c P_1(c,c)$.
\end{proof}

\subsection{Proof of Proposition \ref{prop:external_tensor}}\label{proof:external_tensor}
\begin{proof}
  To give \eqref{eq:external_tensor} is to give a pair of functors such that the following square commutes:
  \begin{equation*}
  \begin{tikzcd}[cramped]
    \widehat{\cat{C}_0}\otimes\widehat{\cat{D}_0} \ar[r] \ar[d,"\hat{\otimes}_0"'] & \overline{\cat{C}_1}\funny\overline{\cat{D}_1} \ar[d,"\hat{\otimes}_1"] \\
    \widehat{\cat{C}_0\otimes\cat{D}_0} \ar[r,"\lan_{\opcat{J_{\cat{C}\funny\cat{D}}}}^L"']& \overline{\cat{C}_1\funny\cat{D}_1}
  \end{tikzcd}
  \end{equation*}
  The tensor
  $\hat{\otimes}_0$ acts on objects following the formula $(F\hat{\otimes} G)(c,d) :=  Fc\otimes Gd$ and on morphisms in the obvious way.
  The tensor $\hat{\otimes}_1$ also acts following the same formula, note that a morphism of $\overline{\cat{C}_1}\funny\overline{\cat{D}_1}$ is a free composition of natural transformations $\alpha:\lan_{\opcat{J_\cat{C}}}F\Rightarrow \lan_{\opcat{J_\cat{C}}}F'$ and $\beta:\lan_{\opcat{J_\cat{D}}}G\Rightarrow \lan_{\opcat{J_\cat{D}}}G'$ with $(1;\beta)(\alpha;1)\neq(\alpha;1)(1;\beta)$ in general.
  Each such arrow induces a natural transformation $\lan_{\opcat{J_\cat{C\funny\cat{D}}}}(F\otimes G)\Rightarrow\lan_{\opcat{J_\cat{C\funny\cat{D}}}}(F'\otimes G')$, for instance:
  \begin{align*}
  \lan_{\opcat{J_\cat{C\funny\cat{D}}}}(F\otimes G) & \cong \lan_{\opcat{i_1}}(\lan_{\opcat{J_\cat{C}}}F \otimes G) \xRightarrow{\lan_{\opcat{i_1}}(\alpha\otimes1)} \lan_{\opcat{i_1}}(\lan_{\opcat{J_\cat{C}}}F' \otimes G) \cong \lan_{\opcat{i_0}}(F' \otimes \lan_{\opcat{J_\cat{D}}}G) \\
  & \xRightarrow{\lan_{\opcat{i_0}}(1\otimes\beta)} \lan_{\opcat{i_0}}(F' \otimes \lan_{\opcat{J_\cat{D}}}G') \cong \lan_{\opcat{J_\cat{C\funny\cat{D}}}}(F'\otimes G')
  \end{align*}
\end{proof}

\subsection{Proof of Proposition \ref{prop:proeff_is_proact}}\label{proof:proeff_is_proact}
\begin{proof}
  Fix a pro-effectful category $(J,P,I)$.
  $J$ is a $\vsq$-category so we have two categories $\cat{C}_0$ and $\cat{C}_1$ with the same objects and an identity on objects functor $J:\cat{C}_0\morph{}\cat{C}_1$.

  The tight $\vsq$-profunctor $P:J_{\cat{C}\funny\cat{C}}\pmorph J_{\cat{C}}$ consists of a profunctor $P_0:\cat{C}_0\otimes\cat{C}_0\pmorph\cat{C}_0$ and a functor $P_1:\cat{C}_1\funny\cat{C}_1\morph{}\overline{\cat{C}_1}$.
  Similarly, the tight $\vsq$-profunctor $I:1\pmorph J_{\cat{C}}$ consists of presheaves $I_0:\opcat{\cat{C}_0}\morph{}\cosmos$ and $I_1=\lan_{\opcat{J}}I_0:\opcat{\cat{C}_1}\morph{}\cosmos$.
  $(P_0,I_0)$ induce a promonoidal structure on $\cat{C}_0$.

  $P_1$ induces the left and right proactions of $\cat{C}_0$ on $\cat{C}_1$.
  Starting with the left proaction, $P_1$ induces a functor $y_1^R P_1 i_1=:P_1^L:\cat{C}_0\otimes\cat{C}_1\morph{}\widehat{\cat{C}_1}$.
  It follows that:
  \begin{equation*}
    P^L_1(1\otimes J) = y_1^R P_1 i_1(1\otimes J) = y_1^R P_1 J_{\cat{C}\funny\cat{C}} = \lan_{\opcat{J}\otimes 1\otimes 1} P_0
  \end{equation*}
  showing that \eqref{eq:extend1} commutes and that the left proaction extends the canonical one on $\cat{C}_0$.
  A similar argument holds for the right proaction.

  Suppose now that we start with the data specified in the proposition.
  The equalities \eqref{eq:extend1} together with the universal property of the pushout induce a functor $P_1:\cat{C}_1\funny\cat{C}_1\morph{}\overline{\cat{C}_1}$
  \begin{equation*}
    \begin{tikzcd}[cramped]
      \cat{C}_0\otimes\cat{C}_0 \ar[r,"J\otimes1"] \ar[d,"1\otimes J"'] & \cat{C}_1\otimes\cat{C}_0  \ar[d,"i_0"] \ar[ddr,bend left,"P^R_1"] & \\
      \cat{C}_0\otimes\cat{C}_1 \ar[r,"i_1"'] \ar[rrd,bend right,"P^L_1"'] & \cat{C}_1\funny\cat{C}_1 \ar[rd,dashed,"P_1"]& \\
      & & \overline{\cat{C}_1}
    \end{tikzcd}
  \end{equation*}
  and it follows that $P_1 J_{\cat{C}\funny\cat{C}} = \lan_{\opcat{J}}^L P_0$ making $(P_0,P_1)$ the components of a tight $\vsq$-profunctor $P:J_{\cat{C}\funny\cat{C}}\pmorph J_{\cat{C}}$.
  The presheaf $I_0:\opcat{\cat{C}_0}\morph{}\cosmos$ together with its Kan extension $I_1:=\lan_{\opcat{J}} I_0$ give the components of a $\vsq$-profunctor $I:1\pmorph J$.
  Checking all the coherences is a long but ultimately routine calculation.
\end{proof}

\subsection{Proof of Theorem \ref{thm:proeff_closed}}\label{proof:proeff_closed}
\begin{proof}
  Suppose $J:\cat{C}_0\morph{}\cat{C}_1$ is a pro-effectful category.
  We will show that $\lan^L_{\opcat{J}}:\widehat{\cat{C}_0}\morph{}\overline{\cat{C}_1}$ is a closed premonoidal category.
  Since $\cat{C}_0$ is promonoidal, $\widehat{\cat{C}_0}$ is closed monoidal under Day convolution.

  As for the premonoidal structure on $\overline{\cat{C}_1}$: on objects it is the same as on $\widehat{\cat{C}_0}$.
  On morphisms, suppose we are given a $\eta:F\Rightarrow G$ in $\overline{\cat{C}_1}$.
  Then we have a $\eta:\lan_{\opcat{J}}F\Rightarrow\lan_{\opcat{J}}G$ and we can describe the left hand part of the premonoidal structure by
  \begin{align*}
    \lan_{\opcat{J}}(F\star F')(-) & \cong \int^{abc} \cat{C}_1(-,Jc)\otimes P_0(c,a,b)\otimes Fa \otimes F'b \cong \int^{ab} P^R_1(-,Ja,b)\otimes Fa \otimes F'b \\
    & \cong \int^{bc} P^R_1(-,c,b)\otimes (\lan_{\opcat{J}}F)(c) \otimes F'b \\
    & \xRightarrow{\int\eta} \int^{bc} P^R_1(-,c,b)\otimes (\lan_{\opcat{J}}G)(c) \otimes F'b \cong \lan_{\opcat{J}}(G\star F')(-)
  \end{align*}
  and similarly for the right hand part.
  It is easily seen that $\lan^L_{\opcat{J}}$ factorises through the centre of this premonoidal structure.

  The internal-hom of the left-closed premonoidal structure, $[G,-]:\overline{\cat{C}_1}\morph{}\widehat{\cat{C}_0}$ is given by
  \begin{equation*}
    [G,H](a) \cong \int_{cd} \cosmos \bigg( P_1^L(c,a,d), \cosmos\big( (\lan_{\opcat{J}}G)(d), (\lan_{\opcat{J}}H)(c) \big) \bigg)
  \end{equation*}
  while the right-closed structure is similar, replacing $P_1^L$ with $P_1^R$.
  In both cases, checking we have the required adjunction is a matter of standard coend calculus.

  Suppose now that $\lan_{\opcat{J}}^L$ is a closed effectful category.
  Then it follows that $\widehat{\cat{C}_0}$ is a closed monoidal category because:
  \begin{align*}
    \widehat{\cat{C}_0} \left( -,[G,\lan_{\opcat{J}}^L(=)] \right) & \cong \overline{\cat{C}_1}\left(\lan^L_{\opcat{J}}(-)\boxtimes G,\lan_{\opcat{J}}^L(=)\right) = \overline{\cat{C}_1}\left(\lan^L_{\opcat{J}}(-\otimes G),\lan_{\opcat{J}}^L(=)\right) \\
    & \cong \widehat{\cat{C}_0}\left(-\otimes G,{\opcat{J}}^*(\lan_{\opcat{J}}^L(=))\right) \cong \widehat{\cat{C}_0}\left(-\otimes G,=\right)
  \end{align*}
  where ${\opcat{J}}^*$ is the right adjoint to $\lan_{\opcat{J}}^L$, both of which are ioo.
  Therefore $\cat{C}_0$ is a promonoidal category.

  The left $\cat{C}_0$-proaction on $\cat{C}_1$ is given by $P_1^L(-,a,b) := y_0^L(a)\boxtimes y_1^L(b) = \boxtimes i_1 (y_0^L(a), y_1^L(b))$ and similarly for the right.
  These extend the canonical proaction because:
  \begin{align*}
    P_1^L(-,a,Jb) = \boxtimes i_1 (y_0^L(a), y_1^L(Jb)) & = \boxtimes i_1 (y_0^L(a), \lan^L_{\opcat{J}}y_0^L(b)) = \boxtimes i_1 (1\otimes_\cosmos\lan^L_{\opcat{J}}) (y_0^L(a),y_0^L(b)) \\
    & = \lan^L_{\opcat{J}}\mathord{\otimes} (y_0^L(a),y_0^L(b)) = \lan^L_{\opcat{J}} P_0(-,a,b)
  \end{align*}
  where we have written the monoidal operation $\otimes$ on $\widehat{\cat{C}_0}$ and the premonoidal operation $\boxtimes$ on $\overline{\cat{C}_1}$ with prefix notation.
\end{proof}

\subsection{Proof of Theorem \ref{thm:optJ_promon}}\label{proof:optJ_promon}
\begin{proof}
$J$ has commutative left and right actions by the monoidal $\vsq$-category $1_{Z\cat{C}}:Z\cat{C}\morph{}Z\cat{C}$.
Consider the $\vsq$-category $\Tamb(J)$ of Tambara modules on $J$ \cite{pastro_street,clarke_profunctor}, whose objects are the $\vsq$-endoprofunctors $P:J\pmorph J$ equipped with left and right strengths over the action by $1_{Z\cat{C}}$. The morphisms are the bistrong $\vsq$-natural transformations.
We can use Proposition \ref{prop:v2profs} to unpack $\Tamb(J)$ into two $\cosmos$-categories and an identity on objects functor, $\Tamb(J)_0\morph{} \Tamb(J)_1$.
The objects of $\Tamb(J)_0$ and $\Tamb(J)_1$ are the bistrong endoprofunctors $P:J\pmorph J$ which are equivalently triples $(P_0:Z\cat{C}\pmorph Z\cat{C}, P_1:\cat{C}\pmorph \cat{C},\eta: P_0\Rightarrow P_1(\opcat{J}\otimes J))$.
$\Tamb(J)_0$ has arrows $\phi:P\Rightarrow Q$ given by pairs $(\phi_0:P_0\Rightarrow Q_0,\phi_1:P_1\Rightarrow Q_1)$ while $\Tamb(J)_1$ has only the $\phi_1$ as arrows.

It is known that the category of Tambara modules is equivalent to the presheaf category of the category of optics \cite{pastro_street,clarke_profunctor}, which in this particular case implies $[\opcat{\opt(J)},\vsq]\cong \Tamb(J)$.
The $\vsq$-category $\opt(J)$ has objects given by pairs $\vb{a}=(a,a')$ of $\opt(J)$ and homs given by
\begin{equation*}
  \opt(J)(\vb{a},\vb{b}) = \int^{xy\in 1_{Z\cat{C}}} J(a,x\otimes b\otimes y)\otimes J(x\otimes b'\otimes y, a')
\end{equation*}
where $J(-,-):= Z\cat{C}(-,-)\morph{}\cat{C}(-,-)$ is the hom of $J$ as a $\vsq$-category and the coend is taken in this fully enriched setting.
By Proposition \ref{prop:v2coend} this coend is given by the following arrow.
\begin{equation*}
  \int^{xy} Z\cat{C}(a,x\otimes b\otimes y)\otimes Z\cat{C}(x\otimes b'\otimes y, a') \morph{} \int^{xy\in Z\cat{C}} \cat{C}(a,x\otimes b\otimes y)\otimes \cat{C}(x\otimes b'\otimes y, a')
\end{equation*}
As a result, the identity on objects functor equivalent to $\opt(J)$ is given by $\opt(Z\cat{C})\morph{}\opt_{Z\cat{C}}(\cat{C})$ as expected.

Now, since $\Tamb(J)$ has a closed monoidal structure given by composition of the profunctors, there is an induced promonoidal structure on $\opt(J)$.
To arrive at the explicit expressions claimed in the Theorem, take objects $\vb{a}$ and $\vb{b}$ of $\opt(J)$ and consider the tensor (i.e. composition as profunctors) of the associated representable presheaves.
\begin{equation*}
  (y_{\vb{a}} \otimes y_{\vb{b}})(-) \cong \int^{wxyz\in 1_{Z\cat{C}}} J(-,w\otimes a\otimes x)\otimes J(w\otimes a'\otimes x, y\otimes b\otimes z) \otimes J(y\otimes b'\otimes z,-)
\end{equation*}
This can be unpacked by Proposition \ref{prop:v2coend} to give the result.

Finally note that the unit of the monoidal structure on $\Tamb(J)$ is $1_J:J\pmorph J$, which is $(1_{Z\cat{C}},1_{\cat{C}},\eta:1_{Z\cat{C}}\Rightarrow y^J y_J)$.
\end{proof}

\subsection{Proof of Theorem \ref{thm:opt_is_proeff}}\label{proof:opt_is_proeff}
\begin{proof}
  The free tight cocompletion of $\opt(J)$ is given by $[\opcat{\opt(Z\cat{C})},\cosmos]\morph{} \overline{\opt_{Z\cat{C}}(\cat{C})}$.
  We will show that this is a closed effectful category and then by Theorem \ref{thm:proeff_closed} we will be done.

  Start by considering the effectful category $\opcat{J}\otimes J:\opcat{Z\cat{C}}\otimes Z\cat{C} \morph{} \opcat{\cat{C}}\otimes\cat{C}$.
  The free tight cocompletion of this category is $\lan^L_{\opcat{J}\otimes J}: \Prof(Z\cat{C}) \morph{} \overline{\Prof(\cat{C})}$ which is closed effectful.
  The domain is the duoidal category $\Prof(Z\cat{C})$ of endoprofunctors on $Z\cat{C}$ and it has a closed monoidal structure given by Day convolution over the monoidal structure of $Z\cat{C}$:
  \begin{equation}\label{eq:central_horz_tensor}
    P * Q := \int^{aa'bb'} Z\cat{C}(-,a\otimes a') \otimes P(a,b) \otimes Q(a',b') \otimes Z\cat{C}(b\otimes b',-)
  \end{equation}
  The premonoidal structure on $\overline{\Prof(\cat{C})}$ is given on objects by \eqref{eq:central_horz_tensor}, and on homs, given a $\eta:P\Rightarrow P'$ in $\overline{\Prof(\cat{C})}$ (that is, a $\eta:\lan_{\opcat{J}\otimes J}P\Rightarrow \lan_{\opcat{J}\otimes J}P'$) the left side of the premonoidal structure is given by:
  \begin{align*}
    \lan_{\opcat{J}\otimes J} (P*Q) \cong & \int^{aa'bb'} \cat{C}(-,J(a\otimes a')) \otimes P(a,b) \otimes Q(a',b') \otimes \cat{C}(J(b\otimes b'),-) \\
    \cong & \int^{a'b'\in Z\cat{C}, cd\in\cat{C}} \cat{C}(-,c\rtimes a')) \otimes (\lan_{\opcat{J}\otimes J}P)(c,d) \otimes Q(a',b') \otimes \cat{C}(d\rtimes b',-) \\
    \xRightarrow{\int\eta} & \int^{a'b'\in Z\cat{C}, cd\in\cat{C}} \cat{C}(-,c\rtimes a')) \otimes (\lan_{\opcat{J}\otimes J}P')(c,d) \otimes Q(a',b') \otimes \cat{C}(d\rtimes b',-) \\
    \cong & \ \lan_{\opcat{J}\otimes J} (P'*Q)
  \end{align*}
  Since $\lan^L_{\opcat{J}\otimes J}$ is a left adjoint, it follows that it is a closed effectful category.

  There is a $\vsq$-category $\Tamb(Z\cat{C}) \morph{} \overline{\Tamb(\cat{C})}$ with objects given by the Tambara modules on $Z\cat{C}$.
  The homs of $\Tamb(Z\cat{C})$ are the bistrong natural transformations while the homs of $\overline{\Tamb(\cat{C})}$ are the bistrong natural transformations between the left Kan extensions along $\opcat{J}\otimes J$ of the Tambara modules.
  This $\vsq$-category inherits a closed effectful structure from $\lan^L_{\opcat{J}\otimes J}$ given by a certain quotient of \eqref{eq:central_horz_tensor} which acts to normalise the duoidal structure on $\Prof(Z\cat{C})$ \cite{garner, earnshaw}.

  Finally note that the presheaf category of optics is equivalent to the category of Tambara modules, $\widehat{\opcat{\opt(Z\cat{C})}} \cong \mathsf{Tamb}(Z\cat{C})$ \cite{clarke_profunctor}, and we can finally check that we also have $\overline{\opt_{Z\cat{C}}(\cat{C})} \cong \overline{\Tamb(\cat{C})}$.
\end{proof}

\end{document}